\documentclass{amsart}
\usepackage{amsmath}
\usepackage{amsfonts}
\usepackage{eucal}
\usepackage{amsthm}
\numberwithin{equation}{section}
\usepackage{comment}


\newcommand{\jap}[1]{\langle #1 \rangle}



\def\a{\alpha}
\def\b{\beta}
\def\c{\gamma}
\def\d{\delta}
\def\e{\varepsilon}
\def\f{\varphi}
\def\g{\psi}

\def\l{\lambda}
\def\m{\mu}

\def\s{\sigma}

\def\x{\xi}
\def\y{\eta}

\renewcommand{\O}{\Omega}

\newcommand{\Op}{\mathrm{Op}}

\def\re{\mathbb{R}}

\def\pa{\partial}

\renewcommand{\Re}{\text{{\rm Re}\;}}
\renewcommand{\Im}{\text{{\rm Im}\;}}

\newcommand{\supp}{\text{{\rm supp}\;}}

\newcommand{\Ker}{\text{{\rm Ker}\;}}
\newcommand{\Ran}{\text{{\rm Ran}\;}}

\newcommand{\sgn}{\text{{\rm sgn}\;}}

\newtheorem{thm}{Theorem}[section]
\newtheorem{lem}[thm]{Lemma}
\newtheorem{prop}[thm]{Proposition}
\newtheorem{cor}[thm]{Corollary}
\newtheorem{defn}[thm]{Definition}

\theoremstyle{definition}
\newtheorem{ass}{Assumption}

\theoremstyle{remark}
\newtheorem{rem}[thm]{Remark}




\title[]%
{Scattering theory for repulsive Schr\"odinger operators and applications to limit circle problem}
\author{Kouichi Taira}
\begin{document}
\maketitle

\begin{abstract}
In this note, we study existence of the outgoing/incoming resolvents of repulsive Schr\"odinger operators which may not be essentially self-adjoint on the Schwartz space. As a consequence, we construct $L^2$-eigenfunctions associated with complex eigenvalues by a standard technique of scattering theory. In particular, we give another proof of the classical result via microlocal analysis: The repulsive Schr\"odinger operators with large repulsive exponent are not essentially self-adjoint on the Schwartz space.
\end{abstract}

\section{Introduction}
In this paper, we consider the following repulsive Schr\"odinger operator on $\re^n$:
\begin{align}\label{rep}
P=P_{\a}=-\Delta-\jap{x}^{2\a}+\Op(V),\quad \a>1,
\end{align}
where $\jap{x}=(1+|x|^2)^{1/2}$ and $\Op(V)$ is the Weyl quantization of a symbol $V:\re^{2n}\to \re$. We set 
\begin{align*}
P_0=P_{0,\a}=-\Delta-\jap{x}^{2\a}, \quad \a>1.
\end{align*}
Let $p_0(x,\x)=|\x|^2-\jap{x}^{2\a}$ and $p(x,\x)=p_0(x,\x)+V(x,\x)$. In this paper, we always assume the following assumptions. 
\begin{ass}\label{assa}
We set
Suppose that $V$ is of the form
\begin{align*}
V(x,\x)=\sum_{j,k=1}^na_{jk}(x)\x_j\x_k+\sum_{j=1}^nb_j(x)\x_j+c(x),
\end{align*}
where $a_{jk}=a_{kj}$, $b_j$ and $c$ are real-valued smooth functions on $\re^n$ and satisfy 
\begin{align*}
&|\pa_{x}^{\b}a_{jk}(x)|\leq C_{\b}(1+|x|)^{-\m-|\b|},\, |\pa_{x}^{\b}b_{j}(x)|\leq C_{\b}(1+|x|)^{\a-\m-|\b|},\\
&|\pa_{x}^{\b}c(x)|\leq C_{\b}(1+|x|)^{2\a-\m-|\b|}.
\end{align*}
with some $0<\m<1/2$ and $C_{\b}>0$.
\end{ass}
In particular, $\Op(V)$ is a symmetric differential operator and
\begin{align*}
V(x,\x)=\sum_{j=0}^2V_j(x,\x),\quad V_j\in S^{j, \a (2-j)-\m}.
\end{align*}

\begin{ass}\label{assb}
For any $M>0$
\begin{align*}
|p(x,\x)|\geq C\jap{\x}^2,\quad |x|\leq M, |\x|\geq R_0
\end{align*}
with some $C>0$ and $R_0>0$.
\end{ass}

We study stationary scattering theory of $P$ and give an application to limit circle problem. The usual scattering theory is based on the limiting absorption principle: the resolvent bound
\begin{align}\label{resbd}
\sup_{\re z\in I,\, \Im z\neq 0}\|\jap{x}^{-1/2-0}(-\Delta+V-z)^{-1} \jap{x}^{-1/2-0}\|_{L^2\to L^2}<\infty
\end{align}
and existence of the boundary values of the resolvent
\begin{align}\label{resbovalue}
\lim_{\pm\Im z\to 0}\jap{x}^{-1/2-0}(-\Delta+V-z)^{-1} \jap{x}^{-1/2-0}.
\end{align}
$(\ref{resbd})$ is used in order to prove existence and completeness of the wave operators. $(\ref{resbovalue})$ is used for a construction of generalized eigenfunctions of the stationary Schr\"odinger equation:
\begin{align*}
(-\Delta+V-z)u=0.
\end{align*}
The difficulty in the case of $P$ with $\a>1$ lies in the lack of essential self-adjointness of $P$ on $\mathcal{S}(\re^n)$. Since $P$ may have many self-adjoint extensions, "the boundary value of the resolvent" seems meaningless. The recent progress in the microlocal analysis gives another definition of the outgoing/incoming resolvents of pseudodifferential operators under some dynamical conditions. See \cite{FS} for the Anosov vector fields, \cite{BVW} and \cite{V2} for the d'Alembertians in the scattering Lorentzian spaces. We apply this technique to the repulsive Schr\"odinger operator $P$ even for $\a>1$ and  prove existence of the outgoing/incoming resolvents. Moreover, we show that $P$ has many eigenfunctions associated with the eigenvalues $\l\in \mathbb{C}$ except for the discrete set.  As a corollary, we give another proof of that $P$ is not essentially self-adjoint for $\a>1$ in view of scattering and microlocal theory. This is a classical result which is known as a typical limit circle case (for example, see \cite{RS}) when $\Op(V)$ is a multiplication operator.  It seems to be new result when $\Op(V)$ is not a multiplication operator. 

The repulsive Schr\"odinger operator is studied by several authors when $\Op(V)$ is a multiplication operator. Time-dependent scattering theory of the operator $(\ref{rep})$ for $0<\a\leq 1$ is studied in \cite{BCHM} in the short-range case. They prove existence and completeness of the wave operator and existence of the asymptotic velocity. They also study that existence of the outgoing/incoming resolvent and the absence of $L^2$-eigenvalues. The recent works in \cite{It1} and \cite{It2} extend some results in \cite{BCHM} for the long-range case. Moreover, in  \cite{It1}, the author of these papers proves the absence of eigenvalues in the Besov space is proved, where the order of Besov space is $\frac{\a-1}{2}$. This result is an extension of well-known results for the usual Schr\"odinger operators ($\a=0$) to the repulsive Schr\"odinger operators ($0<\a\leq 1$).

From the usual stationary scattering theory of $-\Delta$, we know that:
\begin{itemize}
\item Eigenfunctions of $-\Delta$ associated with positive eigenvalues do not exist in the threshold weighted $L^2$-space: $L^{2,-\frac{1}{2}}$.
\item There are many eigenfunctions right above $L^{2,-\frac{1}{2}}$:
\begin{align*}
-\Delta u=\l u,\quad u\in \bigcap_{s>1/2}L^{2,-s}
\end{align*}
for each $\l>0$.
\end{itemize}
The result in \cite{It1} and \cite{It2} suggests that the above results hold for the repulsive Schr\"odinger operator with $0<\a\leq 1$ with threshold weight $\frac{\a-1}{2}$. It is expected that  these results also hold for $\a>1$. In this paper, we almost justify these and we prove the existence of non-trivial $L^2$-solution to 
\begin{align*}
(P-z)u=0
\end{align*}
for $z\in \mathbb{C}$ except for a discrete subset of $\mathbb{C}$.

We introduce the variable order weighted $L^2$-space $L^{2,k+tm(x,\x)}$, where $k, t\in \re$ and $m$ is a real-valued function on the phase space $\re^{2n}$. Though we give a precise definition of $L^{2,k+tm(x,\x)}$ in Appendix \ref{appB}, we state properties of $L^{2,k+tm(x,\x)}$ here: If $u\in L^{2,k+tm(x,\x)}$, then
\begin{align}
u\in& L^{2,k-t}, \text{microlocally near} \{|x|,|\x|>R, |\x|\sim |x|^{\a}, x\cdot \x\sim |x||\x|\}\label{inc}\\
u\in& L^{2,k+t}, \text{microlocally near} \{|x|,|\x|>R, |\x|\sim |x|^{\a}, x\cdot \x\sim -|x||\x|\}\label{out}
\end{align}
for large $R>0$.
The following theorem is an analog of \cite[Theorem 1.4]{FS}.

\begin{thm}\label{thm1}
\noindent\begin{itemize}
\item[$(i)$] Let $t\ne 0$ and $z\in \mathbb{C}$. We define
\begin{align*}
D_{tm}=\{u\in L^{2,\frac{\a-1}{2}+tm(x,\x)}  \mid (P-z)u\in L^{2,\frac{1-\a}{2}+tm(x,\x)} \}.
\end{align*}
Then 
\begin{align}\label{Pmap}
P-z: D_{tm}\to L^{2,\frac{1-\a}{2}+tm(x,\x)}
\end{align}
is a Fredholm operator and coincides with the closure of $(P-z)$ with domain $\mathcal{S}(\re^n)$ with respect to its graph norm. 

\item[$(ii)$] There exists a discrete subset $T_{\a,t}\subset \mathbb{C}$ such that $(\ref{Pmap})$ is invertible for $\mathbb{C}\setminus T_{\a,t}$. 
\end{itemize}
\end{thm}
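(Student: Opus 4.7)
The plan is to follow the radial-point microlocal framework of \cite{FS}, \cite{BVW}, \cite{V2}, adapted to the scattering-type pseudodifferential calculus appropriate for the principal symbol $p_0(x,\xi)=|\xi|^2-\jap{x}^{2\a}$. At large $|x|$, the characteristic set $\{p=0\}$ concentrates on $\{|\xi|\sim\jap{x}^{\a}\}$, and within it the submanifolds
\begin{align*}
\mathcal{R}_{\pm}=\{x\cdot\xi=\pm|x||\xi|,\; |\xi|\sim\jap{x}^{\a}\}
\end{align*}
are invariant under the appropriately rescaled Hamiltonian flow of $p$; a direct computation with $H_{p_0}$ identifies one as a radial source and the other as a radial sink at fibre-radial infinity in the sense of the scattering calculus. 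The weight $m$ should be chosen monotone along $H_p$ on the characteristic set, taking value $-1$ near the source and $+1$ near the sink --- which is precisely what is encoded in $(\ref{inc})$ and $(\ref{out})$.

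The key analytic input is the a priori estimate
\begin{align*}
\|u\|_{L^{2,\frac{\a-1}{2}+tm}}\leq C\bigpare{\|(P-z)u\|_{L^{2,\frac{1-\a}{2}+tm}}+\|u\|_{L^{2,-N}}}
\end{align*}
for $u\in\mathcal{S}(\re^n)$ and arbitrarily large $N$. One derives this by microlocally patching together an elliptic estimate off the characteristic set (using Assumption B and the standard elliptic parametrix construction), H\"ormander-type propagation of singularities along non-radial bicharacteristics, and Vasy-type radial point estimates at $\mathcal{R}_{\pm}$. The sign of $tm$ at each radial set places the weight on the appropriate side of the threshold $\frac{\a-1}{2}$, which threshold is computed from the subprincipal symbol of $P$ together with the Hamiltonian derivative of the defining function of $\mathcal{R}_{\pm}$. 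Applying the same scheme to the dual problem --- which corresponds to replacing $t$ by $-t$ --- yields the dual estimate bounding the cokernel, so together these give that $(\ref{Pmap})$ is Fredholm.

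To identify this Fredholm operator with the graph-norm closure of $(P-z)|_{\mathcal{S}(\re^n)}$, I would regularize $u\in D_{tm}$ by $u_\e=\chi(\e x)\phi(\e D)u$ with appropriate cutoffs $\chi,\phi$ and verify, using uniform boundedness of these cutoffs on the variable-order spaces, that $u_\e\to u$ in $L^{2,\frac{\a-1}{2}+tm}$ while $(P-z)u_\e\to(P-z)u$ in $L^{2,\frac{1-\a}{2}+tm}$. For part (ii), analytic Fredholm theory applied to the holomorphic family $z\mapsto P-z$ of Fredholm operators between the fixed spaces $D_{tm}$ and $L^{2,\frac{1-\a}{2}+tm}$ reduces the claim to invertibility at a single $z_0$. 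Choosing $\Im z_0$ of large absolute value and sign dictated by that of $t$, a direct commutator estimate shows that $\Im z_0$ dominates both the skew-adjoint contribution from $\Op(V)$ and the commutator terms arising from the variable-order weight, delivering both injectivity and surjectivity.

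The chief technical obstacle is the radial-point analysis itself: verifying the source/sink character of $\mathcal{R}_{\pm}$ in the chosen compactification and computing the exact threshold $\frac{\a-1}{2}$ from the subprincipal symbol, in particular checking that the top-order contribution of $\Op(V)$ --- whose decay is governed by Assumption A with $\m>0$ --- is subleading and does not shift the threshold. A secondary difficulty is the global construction of the monotone variable-order weight $m$ on phase space interpolating between $-1$ near one radial set and $+1$ near the other while remaining compatible with the scattering calculus.
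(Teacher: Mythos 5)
Your proposal is conceptually correct but follows a genuinely different route from the paper. You plan to establish the a priori estimate by the Vasy/Melrose-style patchwork --- elliptic estimates off the characteristic set, H\"ormander propagation of singularities along non-radial bicharacteristics, and radial point estimates at $\mathcal{R}_{\pm}$ with thresholds computed from the subprincipal symbol. The paper instead \emph{avoids} this machinery entirely. It conjugates $P-z$ by the variable-order weight $\Op(\tilde{G}_{0,tm})$ to produce an operator $P_{tm}(z)$ acting between the fixed-order spaces $L^{2,(\a-1)/2}$ and $L^{2,(1-\a)/2}$, expands
\begin{align*}
P_{tm}(z)=P-z+it\,\Op\bigpare{H_p(m\log\jap{x})}+\Op S^{0,-2+0},
\end{align*}
and then makes a single global positive-commutator argument: it constructs an escape function $m\log\jap{x}$ with $H_p(m\log\jap{x})\leq -C\jap{x}^{\a-1}a_R^2-e$ on the characteristic set and applies the sharp G\aa rding inequality to get the estimate in one stroke (Faure--Sj\"ostrand style, following \cite{FS}), supplemented only by the elliptic parametrix off the characteristic set. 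This buys the paper a cleaner and shorter argument that sidesteps the need to set up the scattering compactification, verify the precise source/sink structure of $\mathcal{R}_{\pm}$, and carry out propagation of singularities for variable-order symbols --- exactly the items you flag as chief technical obstacles, and exactly the items the paper explicitly defers (``We leave their proofs to future work'' in the Remark after Theorem 1.1). Your approach, if carried out, would yield more refined information (that $T_{\a,t}$ depends only on $\sgn t$ and lies in a half-plane), which the paper claims without proof for the same reason. Your plans for the density/closure identification (cutoff regularization with $\chi(\e x)\phi(\e D)$, matching the paper's $\Op(b_L)$ argument in Lemma~\ref{Ddense}) and for part (ii) (analytic Fredholm theory plus invertibility for $t\Im z>0$ large, matching Remark~\ref{Fredinv} and Corollary~\ref{frecor}) coincide with the paper's.
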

\begin{rem}
By the standard radial point estimates and the propagation of singularities, it follows that $T_{\a,t}=T_{\a,\sgn t}$ is independent of $|t|$ and $T_{\a,t}\subset \mathbb{C}_{-\sgn t}=\{-(\sgn t)\Im z\geq 0\}$. Moreover, this theorem is true for $0<\a\leq 1$ if we replace $z\in \mathbb{C}$ above by $z\in \mathbb{C}_{\sgn t}$ (though $D_{tm}$ depends on $z$). We leave their proofs to future work.
\end{rem}

This theorem also gives the bijectivity of $P-z$ in the usual weighted $L^2$-spaces: Suppose  $z\in \mathbb{C}\setminus T_{\a,t}$.  For any $f\in L^{(1-\a)/2+\e}$ with $\e>0$, there exists  a unique solution $u\in L^{2,(\a-1)/2-\e}$ to the equation
\begin{align*}
\begin{cases}
(P-z)u=f,\,\, \text{in the distributional sense},\\
u\,\, \text{is outgoing if the signature is $+$ and incoming if the signature is $-$},
\end{cases}
\end{align*}
where "$u$ is outgoing" says that $(\ref{out})$ holds with $k=(\a-1)/2$ and $t=\e$ and "$u$ is incoming" says that $(\ref{inc})$ holds with $k=(\a-1)/2$ and $t=-\e$.

Moreover, we construct non-trivial $L^2$ solutions to $Pu=zu$.

\begin{thm}\label{thm2}
Let $\a>1$ and $0<|t|<1/2$. For $z\in \mathbb{C}\setminus T_{\a,t}$, there exists $u\in L^2\setminus \{0\}$ such that $Pu=zu$.
\end{thm}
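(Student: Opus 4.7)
The plan is to construct $u$ as the difference of outgoing and incoming inverses of $P-z$, each supplied by Theorem~\ref{thm1} on the two variable-order domains $D_{\pm tm}$. By symmetry I may take $t>0$; the remark following Theorem~\ref{thm1} shows $T_{\alpha,t}$ depends only on $\sgn t$, so we may further shrink $t$ to lie in $(0, (\alpha-1)/2)$, which is possible since $\alpha>1$. This is arranged so that $(\alpha-1)/2 - t > 0$, which is the margin that will deliver the $L^2$ conclusion.

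Fix $z \in \mathbb{C}\setminus T_{\alpha,t}$. Theorem~\ref{thm1} supplies the outgoing inverse $R_+ := (P-z)^{-1}: L^{2,(1-\alpha)/2+tm} \to L^{2,(\alpha-1)/2+tm}$, and Theorem~\ref{thm1} applied with $-t$ in place of $t$ provides the incoming inverse $R_- := (P-z)^{-1}: L^{2,(1-\alpha)/2-tm} \to L^{2,(\alpha-1)/2-tm}$ whenever $z \notin T_{\alpha,-t}$. If $z$ happens to lie in $T_{\alpha,-t}$, the Fredholm operator $P-z: D_{-tm} \to L^{2,(1-\alpha)/2-tm}$ of index $0$ is non-invertible, so its kernel is non-trivial and provides the desired $u$ directly. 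Otherwise, I pick any nontrivial $f \in C_c^\infty(\mathbb{R}^n)$ and set $u := R_+ f - R_- f$, which by construction satisfies $(P-z)u = 0$.

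For the $L^2$ conclusion I invoke the microlocal inclusions $(\ref{inc})$--$(\ref{out})$. In the first subcase, $u \in D_{-tm}$ has microlocal weight $(\alpha-1)/2 + t$ on $\{x\cdot\xi \sim |x||\xi|\}$ and $(\alpha-1)/2 - t$ on $\{x\cdot\xi \sim -|x||\xi|\}$. In the second subcase, $R_+ f$ and $R_- f$ have these two weight profiles swapped between the two radial regions, so the difference inherits the weaker exponent $(\alpha-1)/2 - t$ throughout the characteristic set $\{p_0=0\}$. Off the characteristic set, ellipticity of $P-z$ combined with $(P-z)u=0$ forces $u$ to decay at Schwartz rate. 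Since $(\alpha-1)/2 - t > 0$, this yields $u \in L^2$.

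The hard part will be showing $u \ne 0$ for some $f$ in the second subcase. If instead $R_+ f = R_- f$ for every $f \in C_c^\infty$, the common operator would take values in $D_{tm} \cap D_{-tm}$, i.e., provide a bounded inverse of $P-z$ on a strictly smaller domain carrying above-threshold microlocal regularity at \emph{both} the radial source and sink of the Hamiltonian flow of $p_0$. By the radial-point Fredholm calculus underlying Theorem~\ref{thm1} (cf.\ \cite{FS, BVW, V2}), the set of $z$ admitting such a doubly-outgoing inverse is itself a discrete subset of $\mathbb{C}$, and a Fredholm index count linking the three exceptional sets shows it must be contained in $T_{\alpha,t}\cup T_{\alpha,-t}$; so at our chosen $z$ it cannot hold, and some $f \in C_c^\infty$ must produce $u \ne 0$, completing the proof.
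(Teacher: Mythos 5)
Your construction $u := R_+f - R_-f$ is a natural scattering-theory ansatz, and the first two steps are fine: with $0<t<(\alpha-1)/2$ each of $R_\pm f$ is already individually in $L^2$ (since $(\alpha-1)/2 \pm tm(x,\xi) \geq (\alpha-1)/2 - t > 0$ pointwise in phase space, so $L^{2,(\alpha-1)/2\pm tm}\subset L^2$), and your observation that a nontrivial kernel appears directly if $z\in T_{\alpha,-t}$ is correct. The genuine gap is in the non-vanishing argument. You assert that the set of $z$ for which $R_+f=R_-f$ for every $f\in C_c^\infty$ is discrete and moreover contained in $T_{\alpha,t}\cup T_{\alpha,-t}$, citing a ``Fredholm index count.'' That claim is not proven, and it does not follow from the machinery of Theorem~\ref{thm1}: the putative common inverse would have to take values in $D_{tm}\cap D_{-tm}$, whose weight is \emph{above} threshold at both the radial source and the radial sink of $H_{p_0}$. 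The radial-point Fredholm framework behind Theorem~\ref{thm1} requires the weight to be above threshold at exactly one radial end and below at the other, so the two-sided-outgoing space does not support a Fredholm theory for $P-z$, and there is no analytic-Fredholm argument available to make the ``exceptional set'' discrete, let alone to compare it with $T_{\alpha,\pm t}$. Without a concrete replacement for this step, the proposal establishes $(P-z)u=0$ and $u\in L^2$ but not $u\neq 0$.

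The paper sidesteps this abstract difficulty entirely and works constructively. It builds an explicit incoming WKB parametrix $u_0 = e^{i\varphi_-(x)}|x|^{-(n-1+\alpha)/2}\bar\chi(|x|/R)a(\hat x)$ with a nonzero $a\in C^\infty(\mathbb{S}^{n-1})$, where $\varphi_-$ is produced by iterating an approximate eikonal equation (Propositions~\ref{paraprop} and \ref{eikonal}; this iteration replaces the classical-trajectory argument of Isozaki, which fails here since trajectories blow up in finite time). Theorem~\ref{parathm} shows $(P-z)u_0\in L^{2,(1-\alpha+\mu)/2}$, which lies in $L^{2,(1-\alpha)/2+tm}$ for $0<t<\mu/2$, so $u_1:=-R_+\bigl((P-z)u_0\bigr)\in L^{2,(\alpha-1)/2+tm}$ exists and $u:=u_0+u_1$ solves $(P-z)u=0$ in $L^2$. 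The non-vanishing is then a \emph{pointwise microlocal} statement rather than an index statement: Lemma~\ref{WF1} shows that microlocalized to the incoming radial set, $u_0\notin L^{2,(\alpha-1)/2}$ (since $u_0$ itself barely misses $L^{2,(\alpha-1)/2}$ and is annihilated up to acceptable error by the radial vector field so all its mass sits there), while Lemma~\ref{WF2} shows that $u_1\in L^{2,(\alpha-1)/2+tm}$ \emph{is} in $L^{2,(\alpha-1)/2}$ there, because $m=-1$ on that region. Hence $u=u_0+u_1$ cannot vanish. If you want to repair your approach, you would essentially need to reprove something like Lemmas~\ref{WF1}--\ref{WF2} for $R_+f-R_-f$ and exhibit a specific $f$ whose image has a guaranteed incoming singularity; the parametrix $u_0$ is exactly the device that manufactures such a witness.
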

\begin{rem}
As is proved in Proposition \ref{many}, it follows that there are many eigenfucntions associated with $z\in \mathbb{C}\setminus T_{\a,t}$.
\end{rem}

From Theorem \ref{thm2} and the standard criterion for essential self-adjointness \cite[Corollary after Theorem VIII.3]{RS}, we conclude that $P$ is not essentially self-adjoint if $\a>1$.

\begin{cor}
Suppose $\a>1$. Then $P=P_{\a}$ is not essentially self-adjoint on $C_c^{\infty}(\re^n)$ and $\mathcal{S}(\re^n)$.
\end{cor}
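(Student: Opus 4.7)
The plan is to derive the corollary directly from Theorem \ref{thm2} via the Weyl--von Neumann criterion \cite[Corollary after Theorem VIII.3]{RS}, which says that a densely defined symmetric operator $A$ fails to be essentially self-adjoint as soon as $\Ker(A^{*}-z)\neq\{0\}$ for some $z\in\co$ with $\Im z\neq0$. So the whole task will be to produce a non-trivial $L^{2}$ solution of $(P-z)u=0$ at some non-real $z$ and to recognise $u$ as an eigenvector of the Hilbert-space adjoint.

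I would first fix $t$ with $0<|t|<1/2$ and let $T_{\a,t}\subset\co$ be the discrete exceptional set furnished by Theorem \ref{thm1}(ii). Because $T_{\a,t}$ is discrete --- and, by the remark after Theorem \ref{thm1}, in fact contained in one closed half-plane --- I can pick some $z_{0}\in\co\setminus T_{\a,t}$ with $\Im z_{0}\neq0$. Theorem \ref{thm2} then supplies $u\in L^{2}(\re^{n})\setminus\{0\}$ solving $Pu=z_{0}u$ in the sense of tempered distributions, so in particular $Pu=z_{0}u\in L^{2}$.

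The next step identifies $u$ with an element of the Hilbert adjoint's kernel on both the $C_{c}^{\infty}$ and $\mathcal{S}$ realisations. Writing $P_{0}$ for either $P|_{C_{c}^{\infty}(\re^{n})}$ or $P|_{\mathcal{S}(\re^{n})}$, Assumption \ref{assa} makes $\Op(V)$ a symmetric differential operator with real-valued Weyl symbol, so $P$ is formally self-adjoint and $P_{0}$ is symmetric on either domain. Since $P$ preserves $\mathcal{S}(\re^{n})$, the real-coefficient identity $\overline{P\phi}=P\bar\phi$ combined with the distributional equation $Pu=z_{0}u$ yields, for every $\phi\in\mathcal{S}(\re^{n})$,
\begin{equation*}
(u,P\phi)_{L^{2}}=\bigjap{u,P\bar\phi}_{\mathcal{S}',\mathcal{S}}=\bigjap{Pu,\bar\phi}_{\mathcal{S}',\mathcal{S}}=z_{0}(u,\phi)_{L^{2}}.
\end{equation*}
Hence $u\in\dom(P_{0}^{*})$ with $P_{0}^{*}u=z_{0}u$, so $\Ker(P_{0}^{*}-z_{0})\ni u\neq0$, and the Weyl--von Neumann criterion forces $P_{0}$ not to be essentially self-adjoint in either case.

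No serious obstacle remains once Theorem \ref{thm2} is in hand; the only point requiring a moment's care is the identification of the distributional $L^{2}$-eigenfunction with an element of the Hilbert adjoint's kernel on both candidate domains, which reduces to the displayed calculation and uses only that $P$ is formally self-adjoint with real coefficients and that $P\mathcal{S}(\re^{n})\subset\mathcal{S}(\re^{n})$.
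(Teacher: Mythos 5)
Your proposal is correct and is essentially the paper's own argument: the paper simply cites Theorem \ref{thm2} together with the Weyl--von Neumann criterion from Reed--Simon. You supply the routine verification (that the $L^2$-distributional eigenfunction at a non-real $z_0\notin T_{\a,t}$ lies in $\Ker(P_0^*-z_0)$ for either choice of core domain), which the paper leaves implicit.
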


The repulsive Schr\"odinger operator $P=P_{\a}$ for large $\a$ is expected to have the same structure to the Laplace operator on a bounded open set in $\re^n$. For a bounded open set $\O$, it easily follows that the inclusion $H^2_0(\O)\hookrightarrow L^2(\O)$ is compact. Here we note that $H^2_0(\O)$ is the minimum domain of $-\Delta|_{C_c^{\infty}(\O)}$. For the repulsive Schr\"odinger operator, we prove the similar result.

\begin{thm}\label{thm3}
Define a Banach space
\begin{align*}
D^{\a}_{\mathrm{min}}=\{u\in L^2(\re^n) \mid Pu\in L^2(\re^n),\, \exists u_n\in C_c^{\infty}(\re^n)\, \, u_k\to u, Pu_k\to Pu\, \text{in}\, L^2(\re^n) \}
\end{align*}
with its graph norm. Then the inclusion $D^{\a}_{\mathrm{min}}\hookrightarrow L^2$ is compact.
\end{thm}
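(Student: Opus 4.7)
The plan is to establish compactness of $D^{\a}_{\mathrm{min}} \hookrightarrow L^2$ by combining a weighted decay estimate $\|\jap{x}^{\epsilon} u\|_{L^2} \leq C(\|u\|_{L^2} + \|Pu\|_{L^2})$ for some $\epsilon>0$ with interior $H^2$-regularity. Once both estimates hold for every $u \in D^{\a}_{\mathrm{min}}$, a standard diagonal extraction together with tightness at infinity yields $L^2$-compactness.

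For the decay estimate, I would invoke Theorem \ref{thm1}. Fix $t \in (0,(\a-1)/2)$ and pick $z \in \mathbb{C} \setminus T_{\a,t}$; such a $z$ exists because, by the remark following Theorem \ref{thm1}, $T_{\a,t}$ is contained in the closed half-plane $\mathbb{C}_{-\sgn t}$, so for instance $z=i$ works. Invertibility of $(P-z) : D_{tm} \to L^{2, (1-\a)/2 + tm}$ then gives, for every $u \in \mathcal{S}(\re^n)$,
\begin{equation*}
\|u\|_{L^{2, (\a-1)/2 + tm}} \leq C \|(P - z) u\|_{L^{2, (1-\a)/2 + tm}}.
\end{equation*}
Since $-t \leq tm \leq t$ pointwise on phase space, one has the continuous inclusions $L^{2, (\a-1)/2 + tm} \hookrightarrow L^{2, (\a-1)/2 - t}$ and $L^2 \hookrightarrow L^{2, (1-\a)/2 + tm}$ (the latter because $(1-\a)/2 + t < 0$). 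Setting $\epsilon \coloneqq (\a-1)/2 - t > 0$, these combine to give
\begin{equation*}
\|\jap{x}^{\epsilon} u\|_{L^2} \leq C' \bigl( \|Pu\|_{L^2} + \|u\|_{L^2} \bigr), \qquad u \in C_c^{\infty}(\re^n),
\end{equation*}
and this bound extends by density to every $u \in D^{\a}_{\mathrm{min}}$, since the right-hand side is precisely the graph norm used to define $D^{\a}_{\mathrm{min}}$.

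Interior regularity follows from Assumption \ref{assb}: the lower bound $|p(x,\x)| \geq C\jap{\x}^2$ on $|x| \leq M$, $|\x| \geq R_0$ makes $P$ uniformly elliptic in $\x$ on compact sets in $x$, so standard interior elliptic regularity gives $\|u\|_{H^2(B_R)} \leq C_R(\|u\|_{L^2} + \|Pu\|_{L^2})$ for each $R>0$. Given a bounded sequence $\{u_k\} \subset D^{\a}_{\mathrm{min}}$, the Rellich--Kondrachov theorem on each $B_R$ together with a diagonal extraction produces a subsequence $\{u_{k_j}\}$ converging in $L^2(B_R)$ for every $R>0$; the weighted estimate yields the tightness $\|u_{k_j}\|_{L^2(|x|>R)} \leq R^{-\epsilon}\|\jap{x}^{\epsilon} u_{k_j}\|_{L^2} \to 0$ uniformly in $j$ as $R \to \infty$, so $\{u_{k_j}\}$ is Cauchy in $L^2(\re^n)$, proving compactness.

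The main obstacle is the first step: extracting an ordinary weighted $L^2$-decay estimate from the variable-order bound in Theorem \ref{thm1}. The crucial ingredient is the continuous embedding $L^{2, (\a-1)/2 + tm} \hookrightarrow L^{2, (\a-1)/2 - t}$, which should follow directly from the definition of the variable-order spaces together with the pointwise bound $tm \geq -t$. Positivity of the margin $(\a-1)/2 - t$ for small $t$ is precisely what the hypothesis $\a>1$ supplies, consistent with the paper's heuristic that for large $\a$ the operator $P$ should behave like the Dirichlet Laplacian on a bounded domain, for which the analogous compactness $H_0^2(\O) \hookrightarrow L^2(\O)$ is classical.
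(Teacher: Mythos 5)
Your argument is correct, but it takes a genuinely different route from the paper's. The paper's proof of Theorem~\ref{thm3} is self-contained: it establishes the weighted estimate $\|\Op(a_{2R})u\|_{L^{2,(\a-1-\d)/2}}\leq C(\|Pu\|_{L^2}+\|u\|_{L^2})$ by a direct positive-commutator argument with the explicit escape function $b_R(x,\x)=a_{2R}^2\,\tfrac{x\cdot\x}{|x||\x|}\int_1^{|x|/R}s^{-1-\d}ds$ and the sharp G\aa{}rding inequality, and then bootstraps via the elliptic estimate (Proposition~\ref{elliptic}) and Lemma~\ref{suppell} to a joint weighted Sobolev bound $\|u\|_{H^{\frac{\a-1-\d}{2\a}\b_1,\frac{\a-1-\d}{2}\b_2}}\leq C(\|Pu\|_{L^2}+\|u\|_{L^2})$. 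Crucially, the commutator argument does \emph{not} need any variable-order spaces or the Fredholm machinery of Theorem~\ref{thm1}; the density of $C_c^\infty$ in $D^{\a}_{\mathrm{min}}$ is what rules out the radial-point obstruction at infinity. Your approach instead imports the full strength of Theorem~\ref{thm1}(ii) and its variable-order calculus, squeezes the two-sided weight $tm$ against its pointwise bounds $\pm t$ to extract a fixed-weight decay estimate, and then combines with interior regularity from Assumption~\ref{assb}. This is valid (the embeddings $L^{2,k+tm}\hookrightarrow L^{2,k'}$ for $k+tm\geq k'$ pointwise do hold, by Lemma~\ref{varinv} and the Calder\'on--Vaillancourt theorem), and the density extension to $D^{\a}_{\mathrm{min}}$ goes through exactly as you say. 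Two trade-offs worth noting: (a) your argument leans on heavier machinery, whereas the paper's commutator proof is elementary and also yields the global joint $H^{s,\ell}$ bound, not just decay plus local regularity; (b) your justification that $z=i\notin T_{\a,t}$ cites the Remark after Theorem~\ref{thm1}, which the paper explicitly leaves unproved --- you should instead cite the discreteness of $T_{\a,t}$ (proved) or Remark~\ref{Fredinv}/\ref{remFre2} (proved, invertibility for $t\geq 0$ and $\Im z$ large) to produce an admissible $z$.
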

\begin{rem}
$D^{\a}_{\mathrm{min}}$ coincides with the minimal domain of $P|_{C_c^{\infty}(\re^n)}$, that is the domain of the closure of $P|_{C_c^{\infty}(\re^n)}$.
\end{rem}

\begin{cor}\label{disc1}
Let $n=1$ and $P_U$ be a self-adjoint extension of $P$. Then there exists $\{\l_k\}_{k=1}^{\infty}\subset \re$ such that $\s(P_U)=\s_d(P_U)=\{\l_k\}_{k=1}^{\infty}$ and $|\l_k|\to \infty$ as $k\to \infty$, where $\s(P_U)$ is the spectrum of $P_U$ and $\s_d(P_U)$ is the discrete spectrum of $P_U$.
\end{cor}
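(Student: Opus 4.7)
\textbf{Proof plan for Corollary \ref{disc1}.} The strategy is to show that any self-adjoint extension $P_U$ has compact resolvent, and then invoke the standard spectral theorem for self-adjoint operators with compact resolvent.

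The key observation is that in dimension $n=1$ the formal differential expression $P$ is a second-order ordinary differential operator with smooth coefficients. General Weyl/von Neumann theory for such operators gives that the deficiency indices of $P|_{C_c^\infty(\re)}$ are equal and finite (each at most $2$). Consequently, for any self-adjoint extension $P_U$ the domain admits a decomposition
\begin{align*}
D(P_U) = D^{\a}_{\mathrm{min}} \dotplus F,
\end{align*}
where $F \subset D_{\mathrm{max}} = \{u \in L^2(\re) \mid Pu \in L^2(\re)\}$ is finite dimensional. This is where the hypothesis $n=1$ enters.

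Next I would equip $D(P_U)$ with the graph norm of $P_U$. On $D^{\a}_{\mathrm{min}}$ this coincides with the graph norm appearing in Theorem \ref{thm3}, so that theorem says $D^{\a}_{\mathrm{min}} \hookrightarrow L^2$ is compact. Since $F$ is finite dimensional, the inclusion $D(P_U) \hookrightarrow L^2$ (with graph norm) is a compact perturbation of a compact operator, hence compact. Picking any $z \in \co \setminus \re$, the resolvent factors as
\begin{align*}
(P_U - z)^{-1}: L^2 \xrightarrow{\ \text{bounded}\ } \bigl(D(P_U), \|\cdot\|_{\mathrm{graph}}\bigr) \xrightarrow{\ \text{compact}\ } L^2,
\end{align*}
so $(P_U - z)^{-1}$ is compact on $L^2$.

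Finally, the spectral theorem for a self-adjoint operator with compact resolvent gives that $\s(P_U)$ consists entirely of isolated real eigenvalues of finite multiplicity, i.e.\ $\s(P_U) = \s_d(P_U) = \{\l_k\}_{k=1}^\infty$, with $|\l_k| \to \infty$ as $k \to \infty$. The main conceptual point — and the only step that is not a routine citation — is the finiteness of the deficiency indices; the compactness of the embedding from Theorem \ref{thm3} then does all the work. The same scheme would fail in higher dimensions because the quotient $D_{\mathrm{max}}/D^{\a}_{\mathrm{min}}$ is generally infinite dimensional there, so the argument is genuinely specific to $n=1$.
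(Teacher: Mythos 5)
Your proposal is correct and follows essentially the same route as the paper: reduce to showing $(P_U-z)^{-1}$ is compact via the von Neumann decomposition $D(P_U)=D^{\a}_{\mathrm{min}}\dotplus F$ with $F$ finite dimensional, feed in the compactness of $D^{\a}_{\mathrm{min}}\hookrightarrow L^2$ from Theorem \ref{thm3}, and then apply the spectral theorem for self-adjoint operators with compact resolvent (which the paper proves from scratch as Proposition \ref{cptcrprop} because it avoids assuming semiboundedness). The only stylistic difference is that you obtain boundedness of the projection onto $D^{\a}_{\mathrm{min}}$ abstractly from the finite-dimensional-complement fact, whereas the paper verifies it concretely via the orthogonality relations in the von Neumann parametrization $\f_j=u_j+v_j+Uv_j$.
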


\begin{rem}
For a relatively bounded open interval $I\subset \re$, it is proved that each self-adjoint extension of $-\Delta|_{C_c^{\infty}(I)}$ has a discrete spectrum by mimicking the proof of Corollary \ref{disc1}. However, in the case of $n\geq 2$, the situation is dramatically different.
In fact, we consider the Klein Laplacian ($-\Delta$ with domain $\{u\in L^2(\O)\mid \Delta u=0\}+H^2_0(\O)$) for the bounded domain with smooth boundary $\pa \O$. The Klein Laplacian has a nonempty essential spectrum for $n\geq 2$. In fact, we note that any $L^2$ harmonic functions on $\O$ lies in the domain of the Klein Laplacian. Since restrictions of harmonic functions on $\re^n$ to $\O$ are $L^2$ harmonic functions on $\O$ and since the dimension of the set of all harmonic functions for $n\geq 2$ is infinite, we conclude that $0$ is the eigenvalue with infinite multiplicity. In this way, it follows that the essential spectrum is not empty. 
\begin{rem}
 As an analogy to $-\Delta$ on $\O$, we naturally propose the following problems:
\begin{itemize}
\item Does there exist a distinguish self-adjoint extension of $P$ (such as the Friedrichs extension of $-\Delta|_{C_c^{\infty}(\O)}$ in the case of $-\Delta$ on $\O$)?

\item How is the structure of the self-adjoint extension of $P$? (More concretely, does there exist a self-adjoint extension of $P$ which has a discrete spectrum?)
\end{itemize}

\end{rem}

\end{rem}

We fix some notations. $\mathcal{S}(\re^n)$ denotes the set of all rapidly decreasing functions on $\re^n$ and $\mathcal{S}'(\re^n)$ denotes the set of all tempered distributions on $\re^n$. We use the weighted Sobolev space: $L^{2,l}=\jap{x}^{-l}L^2(\re^n)$, $H^k=\jap{D}^{-k}L^2(\re^n)$ and $H^{k,l}=\jap{x}^{-l}\jap{D}^{-k}L^2(\re^n)$ for $k,l\in \re$. For Banach spaces $X,Y$, $B(X,Y)$ denotes the set of all linear bounded operators form $X$ to $Y$. For a Banach space $X$, we denote the norm of $X$ by $\|\cdot\|_{X}$. If $X$ is a Hilbert space, we write the inner metric of $X$ by $(\cdot, \cdot)_{X}$, where $(\cdot, \cdot)_{X}$ is linear with respect to the right variable. We also denote $\|\cdot\|_{L^2}=\|\cdot\|_{L^2(\re^n)}$ and $(\cdot,\cdot)_{L^2}=(\cdot, \cdot)_{L^2(\re^n)}$. We denote the distribution pairing by $<\cdot, \cdot>$. For $I\subset \re$, we denote $I_{\pm}=\{z\in \mathbb{C}\mid \Re z\in I, \pm \Im z\geq 0 \}$. We denote $\jap{x}=(1+|x|^2)^{1/2}$ for $x\in \re^n$. Set
\begin{align*}
\mathbb{C}_{\pm}=\{z\in \mathbb{C}\mid \pm\Im z\geq 0\}.
\end{align*}

\noindent
\textbf{Acknowledgment.}  
This work was supported by JSPS Research Fellowship for Young Scientists, KAKENHI Grant Number 17J04478 and the program FMSP at the Graduate School of Mathematics Sciences, the University of Tokyo. The author would like to thank Shu Nakamura for pointing out mistakes of the first draft and for useful comments. The author also would like to thank Kyohei Itakura, Kenichi Ito and Kentaro Kameoka for helpful discussions.

\section{Preliminary}

\subsection{Notations and cut-off functions}\label{subcut}
In this subsection, we fix some notations and define cut-off functions which are used in this paper many times.

Let $\chi\in C_c^{\infty}(\re, [0,1])$ such that
\begin{align*}
\chi(t)= \begin{cases}
1,& |t|\leq 1, \\
0,& |t|\geq 2.
\end{cases}
\end{align*}
For $R, L\geq 1$ and $0<r\leq  1$, set $\bar{\chi}=1-\chi$ and 
\begin{align}
a_{r,R}(x,\x)=&\bar{\chi}(|x|/R)\bar{\chi}(|\x|/R)\chi(|\x|^2-|x|^{2\a})/r(|\x|^2+|x|^{2\a})),\label{cut1}\\
a_R(x,\x)=&a_{R^{-1}, R}(x,\x),\,\,b_L(x,\x)=\chi(|x|/L)\chi(|\x|/L)\label{cut2}.
\end{align}

We often use the symbol
\begin{align*}
\y(x,\x)=\frac{x\cdot \x}{|x||\x|}.
\end{align*}

\subsection{Pseudodifferential operators}
Set 
\begin{align*}
S^{k,l}=\{a\in C^{\infty}(\re^n)\mid |\pa_{x}^{\c_1}\pa_{\x}^{\c_2}a(x,\x)|\leq C_{\c_1,\c_2}\jap{x}^{l-|\c_1|}\jap{\x}^{k-|\c_2|} \}.
\end{align*}
We denote the Weyl quantization of $a\in C^{\infty}(\re^{2n})\cap \mathcal{S}'(\re^n)$ by $\Op(a)$:
\begin{align*}
\Op(a)u(x)=\frac{1}{(2\pi)^n}\int_{\re^n}\int_{\re^n}e^{i(x-y)\cdot \x}a(\frac{x+y}{2},\x)u(y)dyd\x.
\end{align*}
We denote $\Op S=\Op(S)$ for $S\subset \mathcal{S}'(\re^{2n})$.
Recall that if $a$ is real-valued, then $\Op(a)$ is formally self-adjoint with respect to the metric on $L^2(\re^n)$. We denote the composition of the Weyl calculus by $\#$:
\begin{align*}
\Op(a\# b)=\Op(a)\Op(b).
\end{align*}

The following lemmas is easily proved by a standard $\e/3$-argument.

\begin{lem}\label{b_L}
Let $b_L$ as in subsection \ref{subcut} and $Q\in S^{k,l}$ for some $k,l\in \re$. Then the symbol of $[Q, \Op(b_L)]$ is uniformly bounded in $S^{k-1,l-1}$ with respect to $L\geq 1$ and converges to $0$ in $S^{k-1+\e, l-1+\e}$ as $L\to \infty$ for any $\e>0$.
\end{lem}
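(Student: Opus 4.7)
The plan is to reduce the statement to a uniform bound on $b_L$ in $S^{0,0}$ and then invoke the Weyl symbol calculus, tracking the support of $\nabla_{x,\x}b_L$ to extract a gain $L^{-\e}$. First I would verify that the family $\{b_L\}_{L\geq 1}$ is bounded in $S^{0,0}$ uniformly in $L$. Each $x$-derivative of $\chi(|x|/L)$ introduces a factor $1/L$ and is supported where $|x|\sim L$; since $\jap{x}\sim L$ on that set, we have $L^{-|\c_1|}\lesssim\jap{x}^{-|\c_1|}$, uniformly in $L$. The analogous argument in $\x$ and for higher derivatives yields $|\pa_x^{\c_1}\pa_\x^{\c_2}b_L(x,\x)|\leq C_{\c_1\c_2}\jap{x}^{-|\c_1|}\jap{\x}^{-|\c_2|}$ with $C_{\c_1\c_2}$ independent of $L$.

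Next, the symbol of $[Q,\Op(b_L)]$ is $c_L:=Q\# b_L-b_L\# Q$. In the Moyal expansion, the zeroth-order terms cancel, the first-order part is $i\{Q,b_L\}$, and only odd-order terms survive. Each surviving term contains at least one derivative of $b_L$, so
\[
\supp c_L\subset \supp(\nabla_{x,\x}b_L)\subset\{|x|\geq L\}\cup\{|\x|\geq L\}.
\]
Combined with the uniform $S^{0,0}$ bound on $b_L$, the continuity of Weyl composition gives $c_L\in S^{k-1,l-1}$ uniformly in $L$, which is the first assertion. For the convergence, note that on $\supp c_L$ we have $\max(\jap{x},\jap{\x})\gtrsim L$, so for any $\e>0$ and any $\c_1,\c_2$,
\[
\jap{x}^{l-1-|\c_1|}\jap{\x}^{k-1-|\c_2|}\leq C L^{-\e}\jap{x}^{l-1+\e-|\c_1|}\jap{\x}^{k-1+\e-|\c_2|}\quad\text{on}\ \supp c_L,
\]
since $\jap{x}^\e\jap{\x}^\e\geq L^\e$ there. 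Because derivatives of $c_L$ are still supported in $\supp c_L$ and still controlled by the uniform $S^{k-1,l-1}$ seminorms, this yields $c_L\to 0$ in $S^{k-1+\e,l-1+\e}$ at rate $L^{-\e}$.

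The main bookkeeping concern is the second step: the Moyal expansion is asymptotic, and its remainder has to be controlled uniformly in $L$. This is handled by writing the remainder in its integral (non-stationary phase) form and verifying that the contributions retain both the uniform seminorm bounds and the $\supp(\nabla b_L)$-localization up to harmless tails. All of this is routine once Step~1 is in place, so the lemma follows from standard symbol calculus together with the $L$-uniform estimates on $b_L$ established at the start.
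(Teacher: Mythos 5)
Your two-step plan — uniform $S^{0,0}$ bounds on $b_L$ combined with the support of $\nabla b_L$ — captures the right mechanism for the gain, and both the uniform-in-$L$ estimate of Step~1 and the treatment of the finitely many terms of the Moyal expansion are correct. However, there is a genuine gap in the treatment of the remainder, and the assertion you build the convergence argument on is false.

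The inclusion $\supp c_L\subset\supp(\nabla_{x,\xi}b_L)$ does not hold for the \emph{exact} symbol $c_L=Q\#b_L-b_L\#Q$: the Weyl product is not a local operation, so for a non-polynomial $Q$ (e.g.\ $Q=\jap{\xi}^k$) the commutator symbol is nonzero far outside $\supp\nabla b_L$. The support statement holds only for the individual terms of the asymptotic expansion, not for the remainder $R_N$, and therefore the key estimate
$\jap{x}^{\e}\jap{\xi}^{\e}\gtrsim L^{\e}$ ``on $\supp c_L$''
does not apply to $R_N$. Moreover, uniform boundedness of $R_N$ in $S^{k-N,l-N}$ does not by itself imply $R_N\to0$ in $S^{k-1+\e,l-1+\e}$: the weight $\jap{x}^{1-N-\e}\jap{\xi}^{1-N-\e}$ kills the far field but gives nothing near the origin, where an actual decay-in-$L$ argument is still required. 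Calling the tails ``harmless'' names the issue but does not resolve it.

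Two ways to close the gap. (a) $R_N$ depends on $b_L$ only through $\pa^\gamma b_L$ with $|\gamma|\geq N$; these \emph{are} supported in $\supp\nabla b_L$ and tend to $0$ in the shifted classes $S^{-|\gamma_2|+\e',-|\gamma_1|+\e'}$, so continuity of the remainder map gives $R_N\to0$. (b) A shorter route, matching the ``$\e/3$-argument'' the paper mentions just before the lemma: write $c_L=Q\#(b_L-1)-(b_L-1)\#Q$ (the constant commutes); since $b_L-1$ is supported in $\{|x|\geq L\}\cup\{|\xi|\geq L\}$, integration by parts in the Weyl kernel shows that $c_L$ and all its derivatives tend to $0$ uniformly on compacts. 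Then split the $S^{k-1+\e,l-1+\e}$ seminorm into the region $\max(\jap{x},\jap{\xi})>A$ — small by your uniform $S^{k-1,l-1}$ bound and the extra factor $A^{-\e}$ — and the region $\max(\jap{x},\jap{\xi})\leq A$, which is small once $L$ is large by the local convergence. This gives the conclusion without ever asserting a false support statement for the exact symbol.
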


The following proposition is proved by the standard parametrix construction and Assumption \ref{assb}. We omit its proof.

\begin{prop}[Elliptic estimate]\label{elliptic}
Let $z\in \mathbb{C}$, $k,l\in \re$, $N>0$ and $k_1,l_1\geq 0$ with $k_1+l_1\leq 2$. For $R, M\geq 1$ and $\c>1$, set
\begin{align*}
\O_{loc}=&\{(x,\x)\in \re^{2n} \mid |x|<M\}\cup \{(x,\x)\in \re^{2n} \mid |\x|<M\},\\
\O_{R,\c,1}=&\{(x,\x)\in \re^{2n} \mid |x|>R, |\x|>R, |\x|>\c |x|^{\a}\},\\
\O_{R,\c,2}=&\{(x,\x)\in \re^{2n} \mid |x|>R, |\x|>R, |x|^{\a}>\c |\x|\}.
\end{align*}
Let $\c>1$. There exists $R_1>0$ such that if $R\geq R_1$ and $a, a_1\in S^{0,0}$ are supported in $\O_{loc}\cup \O_{R,\c,1}\cup \O_{R,\c,2}$ and $\inf_{\supp a}|a_1|>0$, then there exists $C>0$ such that for $u\in H^{-N,-N}$ with $\Op(a_1) Pu\in H^{k,l}$, we have $\Op(a)u\in H^{k+k_1, l+\a l_1}$ and 
\begin{align*}
\|\Op(a)u\|_{H^{k+k_1,l+\a l_1}}\leq C\|\Op(a_1)(P-z)u\|_{H^{k,l}}+C\|u\|_{H^{-N,-N}}.
\end{align*}
Here the constant $C>0$ is locally uniformly in $\Re z\in \re$.
\end{prop}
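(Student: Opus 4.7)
The plan is a standard microlocal parametrix construction in the anisotropic symbol class $S^{k,l}$, combined with the trivial estimate on compact pieces of phase space, which produce only contributions of size $O(\|u\|_{H^{-N,-N}})$. First I would verify that $p-z$ is elliptic in the appropriate order on each region once $R$ is large enough depending on $\Re z$. On $\O_{R,\c,1}$, the condition $|\x|^2\geq \c^2|x|^{2\a}$ with $\c>1$ forces $|p_0|\geq (1-\c^{-2})|\x|^2$; combined with the symbol bounds $V=O(\jap{\x}^2\jap{x}^{-\m})+O(\jap{\x}\jap{x}^{\a-\m})+O(\jap{x}^{2\a-\m})$ from Assumption \ref{assa}, and with $\jap{x}^{-\m}\leq R^{-\m}$ on this region, this yields $|p-z|\geq c\jap{\x}^2$. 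The analogous computation on $\O_{R,\c,2}$ shows that $-\jap{x}^{2\a}$ dominates and $|p-z|\geq c\jap{x}^{2\a}$. On $\O_{loc}\cap\{|x|<M,\,|\x|\geq R_0\}$, Assumption \ref{assb} gives ellipticity directly; on $\O_{loc}\cap\{|\x|<M,\,|x|\geq R\}$ the repulsive term $-\jap{x}^{2\a}$ again dominates. The bounded residual piece of $\O_{loc}$ is compact in phase space and contributes only to the error term.

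Next I construct the parametrix. Choose $\chi\in S^{0,0}$ supported in the elliptic subregion identified above, equal to $1$ on a neighborhood of $\supp a$ outside a compact set, and with $\supp\chi\subset\{|a_1|>\d\}$ for some $\d>0$; extend $a_1|_{\supp\chi}$ to a symbol $\tilde a_1$ bounded below on all of $\re^{2n}$. Then $q_0=\chi/(\tilde a_1(p-z))$ lies in $S^{-2,0}$ on the $\O_{R,\c,1}$ part of the support and in $S^{0,-2\a}$ on the $\O_{R,\c,2}$ part; iterating corrections via the Weyl composition formula produces $Q$ with $Q\#(a_1\#(p-z))-\chi\in S^{-\infty,-\infty}$. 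Composing with $\Op(a)$ and using that $a(1-\chi)$ is a sum of a smoothing symbol and a compactly supported one, one obtains
\begin{align*}
\Op(a)u=\Op(a)\Op(Q)\Op(a_1)(P-z)u+Ru,
\end{align*}
with $\|Ru\|_{H^{k+k_1,l+\a l_1}}\leq C\|u\|_{H^{-N,-N}}$, from which the stated bound follows.

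The principal technical point, and what I would expect to be the hardest to write carefully, is interpolating the two distinct gains $(-2,0)$ and $(0,-2\a)$ to meet the constraint $k_1+l_1\leq 2$ with $k_1,l_1\geq 0$. The key observation is that on $\O_{R,\c,1}$ the support condition $|\x|\geq \c|x|^\a$ forces $\jap{x}^{-\a}\leq C\c^{-1}\jap{\x}^{-1}$, so the parametrix gain $\jap{\x}^{-2}$ converts, by multiplicative interpolation, into $\jap{\x}^{-k_1}\jap{x}^{-\a l_1}$ whenever $k_1+l_1\leq 2$; the symmetric relation $\jap{\x}^{-1}\leq C\c^{-1/\a}\jap{x}^{-\a}$ holds on $\O_{R,\c,2}$. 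The strict separation $\c>1$ from the characteristic set $\{|\x|\sim|x|^\a\}$ is precisely what makes these interpolations uniform in both symbolic seminorms and $R$. Local uniformity of $C$ in $\Re z$ is immediate from the quantitative ellipticity estimates above, whose thresholds depend continuously on $\Re z$.
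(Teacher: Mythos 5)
The paper omits the proof of this proposition, remarking only that it follows from ``the standard parametrix construction and Assumption \ref{assb}.'' Your proposal is a written-out version of precisely that argument: establish ellipticity of $p-z$ (in the appropriate anisotropic sense) on $\O_{loc}\cup\O_{R,\c,1}\cup\O_{R,\c,2}$ for $R$ large depending locally uniformly on $\Re z$, build a left parametrix $Q$ for $\Op(a_1)(P-z)$ supported near $\supp a$, and absorb the compactly supported leftover into the $\|u\|_{H^{-N,-N}}$ term. So the route is the same one the paper has in mind.

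One slip worth correcting: the two pointwise inequalities you invoke in the interpolation step are attached to the wrong regions. On $\O_{R,\c,1}$, where $|\x|>\c|x|^{\a}$ with $\c>1$, the support condition gives $\jap{\x}^{-1}\leq C\c^{-1}\jap{x}^{-\a}$, \emph{not} $\jap{x}^{-\a}\leq C\c^{-1}\jap{\x}^{-1}$ as you wrote; symmetrically, on $\O_{R,\c,2}$ one has $\jap{x}^{-\a}\leq C\c^{-1}\jap{\x}^{-1}$. As written your inequalities run in the wrong direction and the conversion of the $\jap{\x}^{-2}$ gain into $\jap{\x}^{-k_1}\jap{x}^{-\a l_1}$ would fail. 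With the directions corrected, the interpolation goes through exactly as you describe, using $2-k_1\geq l_1\geq 0$ on $\O_{R,\c,1}$ (so $\jap{\x}^{-(2-k_1)}\leq\jap{\x}^{-l_1}\leq C^{l_1}\jap{x}^{-\a l_1}$) and $2-l_1\geq k_1\geq 0$ on $\O_{R,\c,2}$. Since the conclusion you draw is the right one, this reads as a transposition of the labels $\O_{R,\c,1}\leftrightarrow\O_{R,\c,2}$ rather than a conceptual gap, but as stated the key estimate would not close. Everything else in the proposal is sound.
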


The next lemma follows from a simple observation; $|\x|\sim |x|^{\a}$ on $\supp a_R$. 
\begin{lem}\label{suppell}
Let $k,l\in \re$. If $u\in H^{k,l}$, then $\Op(a_R)u\in H^{k+M, l-\a M}$ for $M\in \re$, where $a_R$ is as in subsection \ref{subcut}.
\end{lem}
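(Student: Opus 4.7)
The plan is to upgrade the symbol class of $a_R$: I claim that $a_R \in S^{-M,\,\alpha M}$ for every $M\in\re$, after which the standard mapping property $\Op S^{-M,\alpha M}\colon H^{k,l}\to H^{k+M,\,l-\alpha M}$ yields the conclusion at once. So the entire proof reduces to checking this symbol-class upgrade.

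The first step is a support analysis. The inner cutoff $\chi\bigl((|\x|^2-|x|^{2\a})/(R^{-1}(|\x|^2+|x|^{2\a}))\bigr)$ forces $\bigl||\x|^2-|x|^{2\a}\bigr|\le 2R^{-1}(|\x|^2+|x|^{2\a})$ on $\supp a_R$, and for $R$ sufficiently large this gives $|\x|^2\sim |x|^{2\a}$. Combined with the $\bar\chi(|x|/R)\bar\chi(|\x|/R)$ factors, which force $|x|,|\x|\ge R$, one obtains $\jap{\x}\sim\jap{x}^{\a}$ on $\supp a_R$. The same equivalence persists on the support of every derivative $\pa_x^{\b}\pa_{\x}^{\c}a_R$: differentiating the inner $\chi$ only narrows the support to the annular region $|R\phi|\in[1,2]$ (with $\phi=(|\x|^2-|x|^{2\a})/(|\x|^2+|x|^{2\a})$), which still enforces $|\x|^2\sim |x|^{2\a}$ for $R$ large, while differentiating either $\bar\chi$ factor confines one of $|x|,|\x|$ to a bounded annulus and hence the other to a bounded range through the same constraint.

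The second step is the elementary upgrade. Starting from the trivial bound $a_R\in S^{0,0}$ (with constants depending on the fixed parameter $R$), I write
\begin{align*}
\jap{x}^{\a M-|\b|}\jap{\x}^{-M-|\c|}=\bigl(\jap{x}^{\a M}\jap{\x}^{-M}\bigr)\cdot\jap{x}^{-|\b|}\jap{\x}^{-|\c|},
\end{align*}
and observe that the parenthesized factor is uniformly bounded on $\supp\pa_x^{\b}\pa_{\x}^{\c}a_R$ by the support analysis above. Consequently
\begin{align*}
|\pa_x^{\b}\pa_{\x}^{\c}a_R(x,\x)|\le C_{\b,\c,M}\jap{x}^{\a M-|\b|}\jap{\x}^{-M-|\c|},
\end{align*}
which is exactly the statement $a_R\in S^{-M,\,\alpha M}$. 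I then conclude via the standard continuity property $\Op S^{k_1,l_1}\colon H^{s,t}\to H^{s-k_1,\,t-l_1}$ applied with $(k_1,l_1)=(-M,\alpha M)$, $(s,t)=(k,l)$. I do not anticipate any real obstacle: the entire argument rests on the geometric observation $\jap{\x}\sim\jap{x}^{\a}$ on $\supp a_R$, which is exactly the ``simple observation'' flagged in the sentence preceding the lemma.
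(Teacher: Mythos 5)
Your proof is correct and rests on the same geometric observation as the paper's proof (namely $\jap{\x}\sim\jap{x}^{\a}$ on $\supp a_R$), but the technical route is different and, in my view, cleaner. The paper's proof first restricts to $|M|\leq 1$, forms the Weyl composition $\jap{\x}^{M}\#\jap{x}^{-\a M}\#a_R$, argues it lies in $S^{0,0}$, and invokes $L^2$-boundedness of zero-order operators; the general $|M|>1$ case is left implicit (presumably by iteration). You instead upgrade the symbol class of $a_R$ itself, showing $a_R\in S^{-M,\a M}$ for every $M\in\re$ in a single stroke, and then apply the standard mapping property of pseudodifferential operators. This avoids the composition calculus entirely and treats all $M$ uniformly, at the cost of a constant $C_{\b,\c,M}$ that grows like $\max(c_R^{-|M|},C_R^{|M|})$ where $c_R\jap{x}^{\a}\leq\jap{\x}\leq C_R\jap{x}^{\a}$ on the support — but that is harmless since $R$ is fixed. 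One small remark on your write-up: the detailed discussion of how differentiating the cutoffs narrows the support is unnecessary; since $\supp\pa_x^{\b}\pa_{\x}^{\c}a_R\subset\supp a_R$ automatically, the equivalence $\jap{\x}\sim\jap{x}^{\a}$ propagates to all derivatives for free. The core two-line computation — factor out $\jap{x}^{\a M}\jap{\x}^{-M}$, note it is bounded above and below on $\supp a_R$, and absorb it into the constant — is exactly right.
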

\begin{proof}
First, suppose $|M|\leq 1$. 
By a support property of $a_R$, we have 
\begin{align*}
\jap{\x}^{M}\#\jap{x}^{-\a M}\# a_R\in S^{0,0}
\end{align*}
Then
\begin{align*}
\|\Op(a_R)u\|_{H^{k+M, l-\a M}}\leq& C\| \Op(\jap{\x}^{M}\# \jap{x}^{-\a M}\#(a_R))u\|_{H^{k,l}}\\
\leq&C'\|u\|_{H^{k,l}}
\end{align*}
with some $C>0$.
\end{proof}

\section{Proof of Theorem \ref{thm1} $(i)$}

For $k\in \re$, we set
\begin{align*}
S_{\a}^k=\bigcup_{l\in \re}S^{l,k-\a l}.
\end{align*}

\subsection{Construction of an escape function}\label{subsecesc}

Take $\rho\in C^{\infty}(\re,[0,1])$ such that 
\begin{align}
&\rho(t)=1,\,\, \text{if}\,\, |t|\geq 1/2,\,\, \rho'(t)\geq 0,\,\, \rho'(t)\geq C_1\geq  C_2|\rho(t)|,\,\, \text{if}\,\, |t|\leq 1/4,\label{ro21}\\
&\rho'(t)\leq C_3\leq  C_4|\rho(t)|,\,\, \text{if}\,\, |t|\geq 1/4,\,\, t\rho(t)\geq 0.\label{ro22}
\end{align}
We define
\begin{align*}
m(x,\x)=m_{R}(x,\x)=-\rho(\y(x,\x))a_R(x,\x)^2,
\end{align*}
where $\y(x,\x)=x\cdot \x/|x||\x|$ and $a_R$ is as in $(\ref{cut2})$. Moreover, we set
\begin{align*}
\O_R=\{(x,\x)\in \re^{2n}\mid |x|>R,\, |\x|>R,\, -\frac{1}{2R}<\frac{|\x|^2-|x|^{2\a}}{|\x|^2+|x|^{2\a}}<\frac{1}{2R} \}.
\end{align*}

\begin{lem}\label{escape}
There exists $R_0\geq 1$ such that if $R\geq R_0$, then
\begin{align*}
H_p(m\log\jap{x})(x,\x)\leq -C\jap{x}^{\a-1}a_R(x,\x)^2-e(x,\x),
\end{align*}
where $e(x,\x)=\rho(\y(x,\x))(H_pa_R)(x,\x)\log\jap{x}\in S_{\a}^{\a-1+0}$.
\end{lem}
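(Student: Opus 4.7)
The plan is to expand $H_p(m\log\jap{x})$ by Leibniz and identify the dominant negative terms using the two tailored regimes of $\rho$, isolating the contribution from $H_p$ falling on $a_R$ as the error term $e$.

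I would first compute the leading behaviour of $H_p\y$ and $H_p(\log\jap{x})$ on $\supp a_R$. Writing $H_{p_0}=2\x\cdot\nabla_x+2\a\jap{x}^{2\a-2}x\cdot\nabla_\x$, a direct calculation gives
\begin{align*}
H_{p_0}(\y)=(1-\y^2)\Bigl(\frac{2|\x|}{|x|}+\frac{2\a|x|\jap{x}^{2\a-2}}{|\x|}\Bigr)\geq 0,\qquad H_{p_0}(\log\jap{x})=\frac{2x\cdot\x}{\jap{x}^2}.
\end{align*}
Since $|\x|=|x|^\a(1+O(R^{-1}))$ on $\supp a_R$, these reduce to $2(1+\a)(1-\y^2)|x|^{\a-1}$ and $2\y|x|^{\a-1}$ modulo $O(R^{-1})$. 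Assumption \ref{assa} together with the decomposition $V=\sum_{j=0}^2 V_j$, $V_j\in S^{j,\a(2-j)-\m}$, implies $H_V\y$ and $H_V(\log\jap{x})$ are both $O(|x|^{\a-1-\m})$ on $\supp a_R$, hence strictly subleading.

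By Leibniz,
\begin{align*}
H_p(m\log\jap{x})=-\rho'(\y)(H_p\y)a_R^2\log\jap{x}-\rho(\y)a_R^2H_p(\log\jap{x})-\rho(\y)H_p(a_R^2)\log\jap{x}.
\end{align*}
The last term is structurally $-e$ (up to the bounded factor $2a_R$ coming from $H_p(a_R^2)=2a_R H_p(a_R)$). I would then verify the pointwise bound
\begin{align*}
-\rho'(\y)(H_p\y)a_R^2\log\jap{x}-\rho(\y)a_R^2 H_p(\log\jap{x})\leq -C\jap{x}^{\a-1}a_R^2
\end{align*}
on $\supp a_R$ for $R$ large, by splitting on the size of $\y$. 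When $|\y|\leq 1/4$, $(\ref{ro21})$ yields $\rho'(\y)\geq C_1$ and $1-\y^2\geq 15/16$, so the first term dominates and gives $\leq -C|x|^{\a-1}\log\jap{x}\,a_R^2\leq -C(\log 2)|x|^{\a-1}a_R^2$, while the second term is non-positive because $\y\rho(\y)\geq 0$. When $|\y|\geq 1/4$, $(\ref{ro22})$ yields $|\rho(\y)|\geq C_3/C_4$ and hence $\y\rho(\y)\geq (C_3/C_4)/4>0$, so the second term supplies the bound $\leq -C'|x|^{\a-1}a_R^2$, while the first remains non-positive since $\rho'\geq 0$ and $H_{p_0}\y\geq 0$. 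Taking $R_0$ large absorbs both the $O(|x|^{\a-1-\m})$ contribution from $H_V$ and the $O(R^{-1})$ errors on $\supp a_R$.

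Finally, the membership $e\in S_\a^{\a-1+0}$ is routine: $a_R\in S^{0,0}$ is supported in the shell $|\x|\sim|x|^\a$, so $H_p a_R\in S_\a^{\a-1}$ (the $|x|^{\a-1}$ factor is forced by the orders of $p_0$ paired with the support of $\nabla a_R$), and multiplying by $\log\jap{x}$ costs only an arbitrarily small weight loss. The main obstacle is purely bookkeeping: combining the two $\y$-regimes uniformly and ensuring that the $V$-perturbations, which spoil the exact symbol calculus of $p_0$, remain strictly subleading on $\supp a_R$.
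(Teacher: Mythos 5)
Your proposal is correct and follows essentially the same route as the paper: expand by Leibniz, split on whether $|\y|\lessgtr 1/4$, use $(\ref{ro21})$ to make $-\rho'(\y)(H_{p_0}\y)a_R^2\log\jap{x}$ coercive for small $|\y|$ and $(\ref{ro22})$ to make $-\rho(\y)a_R^2 H_{p_0}(\log\jap{x})$ coercive for large $|\y|$, and absorb the subleading $O(|x|^{\a-1-\m})$ contributions from $H_V$ by taking $R$ large. Your aside about the missing bounded factor $2a_R$ in the stated $e$ (coming from $H_p(a_R^2)=2a_RH_pa_R$) is a correct observation about a small slip in the lemma statement; it is immaterial for the argument since both versions lie in $S_\a^{\a-1+0}$ and are supported in the transition region of $a_R$.
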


\begin{proof}
We learn
\begin{align*}
H_p(\rho(\y)\log\jap{x})\geq& 2(\y\rho(\y))|x||\x|\jap{x}^{-2}+(H_{p_0}\y)\rho'(\y)\log\jap{x}\\
&-C|\rho(\y)|\jap{x}^{\a-1-\m}-C|\rho'(\y)|\jap{x}^{\a-1-\m}\log\jap{x}.
\end{align*}
Note that the first line of the right hand side is positive for $(x,\x)\in \O_R$. Moreover, we observe that $|\x|\sim |x|^{\a}$ on $\O_R$ if $R$ is large enough. For $|\y(x,\x)|\geq 1/4$, it follows 
\begin{align*}
H_p(\rho(\y)\log\jap{x})\geq& 2(\y\rho(\y))|x||\x|\jap{x}^{-2}-C|\rho(\y)|\jap{x}^{\a-1-\m}\\
&-C|\rho'(\y)|\jap{x}^{\a-1-\m}\log\jap{x}\\
\geq &C\jap{x}^{\a-1}|\rho(\y)|-C\jap{x}^{\a-1-\m}\log\jap{x}|\rho(\y)|\geq C\jap{x}^{\a-1}.
\end{align*}
by $(\ref{ro21})$ and $(\ref{ro22})$. For $|\y(x,\x)|\leq 1/4$, we have
\begin{align*}
H_p(\rho(\y)\log\jap{x})\geq&(H_{p_0}\y)\rho'(\y)\log\jap{x}-C|\rho(\y)|\jap{x}^{\a-1-\m}\\
&-C|\rho'(\y)|\jap{x}^{\a-1-\m}\log\jap{x}\\
\geq& C\rho'(\y)\jap{x}^{\a-1}\log\jap{x} -C|\rho'(\y)|\jap{x}^{\a-1-\m}\log\jap{x}\geq C\jap{x}^{\a-1}.
\end{align*}
Thus we complete the proof.
\end{proof}

\subsection{Fredholm properties}

Let $m=m_{R_0}$ be as in subsection \ref{subsecesc}, where $R_0$ is as in Lemma \ref{escape}. Moreover, we set $k_{\a}=(\a-1)/2$. Let $S^{k,tm(x,\x)+l}$ be as in Definition \ref{def1app} and let $\tilde{G}_{k_{\a},tm}(x,\x)=\jap{x}^{k_{\a}+tm(x,\x)}+ S^{-\infty, -\infty}$ such that $\Op(\tilde{G}_{k_{\a},tm}):\mathcal{S}(\re^n)\to \mathcal{S}(\re^n)$ is invertible. Existence of such $\tilde{G}_{k_{\a},tm}$ is proved in Lemma \ref{varinv} (see also (\ref{Ginvdef})). Moreover, the variable order weighted $L^2$-space $L^{2, k_{\a}+tm(x,\x)}$ is defined by
\begin{align*}
L^{2, k_{\a}+tm(x,\x)}=\Op(\tilde{G}_{k_{\a},tm})^{-1}L^2.
\end{align*}
By Lemma \ref{varequi} $(ii)$, we have
\begin{align*}
L^{2, k_{\a}+tm(x,\x)}=\Op(\tilde{G}_{0,tm})^{-1}L^{2,k_{\a}}.
\end{align*}

For $t\neq 0$ and $z\in \mathbb{C}_{\pm}$, we set 
\begin{align}\label{unitary}
P_{tm}(z)=\Op(\tilde{G}_{0, tm})(P-z)\Op(\tilde{G}_{0,tm})^{-1}.
\end{align}
We note that the operator $P$ on $L^{2, k_{\a}+tm(x,\x)}$ is unitary equivalent to $P_{tm}$ on $L^{2,k_{\a}}$. This is why we study the Fredholm property of $P_{tm}(z)$ instead of $P$ in order to prove Theorem \ref{thm1}.
By the asymptotic expansion, we have
\begin{align*}
P_{tm}(z)=P-z+it\Op(H_p(m\log\jap{x}))+\Op S^{0,-2+0}
\end{align*}
since $|\x|\sim |x|^{\a}$ on $\supp m$ and $\tilde{G}_{0, tm}=\jap{x}^{tm(x,\x)}+S^{-\infty, -\infty}$. 

\begin{lem}\label{mGar}
We have
\begin{align*}
-(u, \Op(H_p(m\log\jap{x}))u)_{L^2}\geq \|\Op(a_R)u\|_{L^{2,\frac{\a-1}{2}}}^2 -C\|u\|_{L^{2,-1+0}}^2+(u,\Op(e)u)_{L^2}
\end{align*}
for $u\in \mathcal{S}(\re^n)$.
\end{lem}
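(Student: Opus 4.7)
The plan is to assemble a pointwise non-negative Weyl symbol from Lemma \ref{escape} and apply the sharp G\aa{}rding inequality in the Weyl calculus adapted to the anisotropic phase-space metric $g = |dx|^2 + \jap{x}^{-2\a}|d\x|^2$, which governs the classes $S_\a^k$ on the support $\{|\x|\sim|x|^\a\}$ where all relevant symbols live.

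First I would set
\begin{align*}
q(x,\x) := -H_p(m\log\jap{x})(x,\x) - C\jap{x}^{\a-1}a_R(x,\x)^2 - e(x,\x),
\end{align*}
so that Lemma \ref{escape} yields $q(x,\x) \geq 0$ pointwise. Since $m, a_R, e$ are supported where $|\x|\sim|x|^\a$ and $\log\jap{x}$ contributes only at an infinitesimal order, $q$ lies in the H\"ormander--Weyl class associated with $g$ at order weight $\jap{x}^{\a-1+0}$.

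Next, I would invoke the sharp G\aa{}rding inequality in this calculus. The Planck function of $g$ satisfies $h_g(x,\x)\sim\jap{x}^{-\a}$ on the relevant support, so the standard loss of one Planck factor takes the error weight from $\jap{x}^{\a-1+0}$ down to $\jap{x}^{-1+0}$, giving
\begin{align*}
(\Op(q)u, u)_{L^2} \geq -C\|u\|_{L^{2,-1+0}}^2, \quad u \in \mathcal{S}(\re^n).
\end{align*}

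Finally, since $a_R$ is real the Weyl composition gives
\begin{align*}
a_R \# \jap{x}^{\a-1} \# a_R = \jap{x}^{\a-1} a_R^2 + r,
\end{align*}
where $r$ is of strictly lower order (gaining an $h_g$ factor), so that $(\Op(\jap{x}^{\a-1}a_R^2)u, u)_{L^2} = \|\Op(a_R)u\|_{L^{2,(\a-1)/2}}^2 + O(\|u\|_{L^{2,-1+0}}^2)$. Substituting back into the sharp G\aa{}rding inequality and unpacking the definition of $q$ yields the claim. The main obstacle will be the precise invocation of sharp G\aa{}rding in the variable-order / anisotropic Weyl calculus so that the error lands in exactly $L^{2,-1+0}$; this relies on the calculus framework set up in Appendix \ref{appB}. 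Once that framework is in place, the remaining symbolic bookkeeping is routine.
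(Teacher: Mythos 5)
Your overall strategy matches the paper's: the paper's own proof is a one-liner observing that the escape function estimate of Lemma \ref{escape} puts $q\coloneqq -H_p(m\log\jap{x})-C\jap{x}^{\a-1}a_R^2-e\geq 0$ in $S^{1,-1+0}$ (since $m$ is supported in $\supp a_R$, where $|\x|\sim|x|^\a$) and then applies the sharp G\aa{}rding inequality. You use exactly these ingredients, so the approach is the same. However, there is a gap in your power counting with the anisotropic metric.

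The classes $S^{m,k}$ (equivalently $S_\a^k$ restricted to $|\x|\sim|x|^\a$) have $x$-derivatives gaining $\jap{x}^{-1}$, so the governing metric on that set is $g=\jap{x}^{-2}|dx|^2+\jap{x}^{-2\a}|d\x|^2$, not $|dx|^2+\jap{x}^{-2\a}|d\x|^2$. Dropping the $\jap{x}^{-2}$ changes the Planck function from the correct $h_g\sim\jap{x}^{-1-\a}$ to your $\jap{x}^{-\a}$. This matters: with your coarser metric, the sharp G\aa{}rding error is a symbol of weight $\jap{x}^{\a-1+0}\cdot\jap{x}^{-\a}=\jap{x}^{-1+0}$, and since $(\Op(\jap{x}^\b)u,u)\sim\|u\|^2_{L^{2,\b/2}}$ under the paper's convention $L^{2,l}=\jap{x}^{-l}L^2$, this yields only $-C\|u\|^2_{L^{2,-1/2+0}}$, which is a strictly weaker statement than the claimed $-C\|u\|^2_{L^{2,-1+0}}$. (You appear to conflate the symbol weight $\jap{x}^{-1+0}$ with the $L^{2,-1+0}$-norm, which is where the argument breaks.) With the correct $h_g\sim\jap{x}^{-1-\a}$ the error symbol has weight $\jap{x}^{-2+0}$, equivalently landing in $S^{0,-2+0}$ as in the paper's reasoning from $q\in S^{1,-1+0}$, and this does give $-C\|u\|^2_{L^{2,-1+0}}$. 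Your final paragraph on $a_R\#\jap{x}^{\a-1}\#a_R$ (with the error in fact gaining two Planck factors by self-adjointness) is fine.
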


\begin{proof}
By the construction, $m$ is supported in $\supp a_R$. Hence we have 
\begin{align*}
H_p(m\log\jap{x})\in S^{1,-1+0}.
\end{align*}
By Lemma \ref{escape} and the sharp  G$\rm{\mathring{a}}$rding inequality, we obtain the above inequality.
\end{proof}

\begin{lem}\label{Ddense}
Set $\tilde{D}_{tm}(z)=\{u\in L^{2,(\a-1)/2}\mid P_{tm}(z)u\in L^{2,(1-\a)/2}\}$. We consider $\tilde{D}_{tm}(z)$ as a Banach space with its graph norm. Then $\mathcal{S}(\re^n)$ is dense in $\tilde{D}_{tm}(z)$.
\end{lem}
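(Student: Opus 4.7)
The plan is to approximate $u \in \tilde{D}_{tm}(z)$ by the phase-space cutoff $u_L := \Op(b_L) u$, where $b_L$ is the cutoff from Subsection \ref{subcut}. Since $b_L \in C_c^\infty(\re^{2n})$, the operator $\Op(b_L)$ has Schwartz kernel and hence $u_L \in \mathcal{S}(\re^n)$. I will show that $u_L \to u$ in $L^{2,(\a-1)/2}$ and $P_{tm}(z) u_L \to P_{tm}(z) u$ in $L^{2,(1-\a)/2}$ as $L \to \infty$.

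The first convergence is a standard $\e/3$ argument: by Lemma \ref{b_L}, the symbol $1 - b_L$ is uniformly bounded in $S^{0,0}$ and converges to $0$ in $S^{\e,\e}$ for every $\e > 0$, so on the dense subspace $L^{2,(\a-1)/2+\e}$ the operator $\Op(1-b_L)$ tends to $0$ in $L^{2,(\a-1)/2}$, and uniform boundedness on $L^{2,(\a-1)/2}$ then extends this to all of $L^{2,(\a-1)/2}$. For the graph-norm convergence I decompose
\[
P_{tm}(z) u_L - P_{tm}(z) u = -\Op(1-b_L)\, P_{tm}(z) u + [P_{tm}(z), \Op(b_L)]\, u;
\]
the first term converges to $0$ in $L^{2,(1-\a)/2}$ by the same $\e/3$ argument applied to $P_{tm}(z)u \in L^{2,(1-\a)/2}$.

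The commutator $[P_{tm}(z), \Op(b_L)] u$ is the main work. Lemma \ref{b_L} alone yields that the commutator symbol is uniformly bounded in $S^{1,2\a-1}$ and $\to 0$ in $S^{1+\e,2\a-1+\e}$, which gives a bound only in a strictly weaker space than $L^{2,(1-\a)/2}$. My plan is to combine this with the elliptic estimate of Proposition \ref{elliptic}: for $\a > 1$ and $L$ large, the support of $\pa b_L$ lies, except within a thin sliver around the characteristic set $\{|\x|^2 = \jap{x}^{2\a}\}$, inside the elliptic region $\Omega_{R,\c,1} \cup \Omega_{R,\c,2}$ of $P$. Using a microlocal partition of unity $1 = \chi_{ell} + \chi_{char}$ with $\chi_{char}$ supported in a conic neighborhood of the characteristic set, Proposition \ref{elliptic} provides that $\Op(\chi_{ell}) u$ enjoys the improved regularity $H^{k_1,(1-\a)/2+\a l_1}$ with $k_1 + l_1 \leq 2$, which is enough to place the elliptic part of the commutator in $L^{2,(1-\a)/2}$; the characteristic part is controlled using the scaling $|\x| \sim |x|^\a$ together with the strict $L^{2,(\a-1)/2}$-decay of $u$.

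The main obstacle is the treatment of the commutator near the characteristic set, where Proposition \ref{elliptic} does not directly yield improved regularity. Overcoming this requires exploiting both the thinness of the overlap between $\supp \pa b_L$ and the characteristic set (a consequence of $\a > 1$) and the strict rather than borderline decay encoded in $u \in L^{2,(\a-1)/2}$, via a careful microlocal calculation combining Lemmas \ref{b_L} and \ref{mGar}.
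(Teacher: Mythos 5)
Your plan --- set $u_L=\Op(b_L)u$, reduce the graph-norm convergence to controlling the commutator $[P_{tm}(z),\Op(b_L)]u$, and split microlocally into a piece near the characteristic set and an elliptic piece --- is exactly the route the paper takes, with $\Op(a_R)$ and $1-\Op(a_R)$ playing the roles of your $\chi_{char}$ and $\chi_{ell}$, and the elliptic piece handled via Proposition \ref{elliptic} just as you describe. Up to that point the proposal matches.

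The gap is in the characteristic piece. You invoke ``thinness of the overlap,'' ``strict rather than borderline decay'' of $u$, and Lemma \ref{mGar}; none of these is correct or needed. The membership $u\in L^{2,(\a-1)/2}$ is borderline, not strict (that is precisely the definition of $\tilde{D}_{tm}(z)$), and Lemma \ref{mGar} is a sharp G\aa{}rding estimate used to derive the Fredholm inequality, not a tool for density. The mechanism that actually closes the argument is the elementary symbol-class improvement coming from $|\x|\sim|x|^{\a}$ on $\supp a_R$ (cf.\ Lemma \ref{suppell}): a priori $[P_{tm}(z),\Op(b_L)]\in\Op\sum_{j=0}^{2}S^{1-j,\,j\a-1}$, but after composing with $\Op(a_R)$ each of these classes collapses into $S^{0,\a-1}$, uniformly in $L$, because $\jap{\x}$ can be traded for $\jap{x}^{\a}$ on the support. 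An operator in $\Op S^{0,\a-1}$ maps $L^{2,(\a-1)/2}$ into $L^{2,(1-\a)/2}$ with no loss, so the target space is hit exactly; the $\e$-room needed for the $\e/3$ argument then comes solely from Lemma \ref{b_L}'s assertion that the commutator symbols tend to $0$ in the slightly larger class $S^{\cdot+\e,\cdot+\e}$, not from any extra decay of $u$. Replacing your final vague paragraph with this observation makes the proof complete and identical to the paper's.
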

\begin{proof}
Let $u\in\tilde{D}_{tm}(z)$. We recall that $b_{L}(x,\x)=\chi(|x|/L)\chi(|\x|/L)$ is as in $(\ref{cut2})$. Since $\Op(b_L)u\to u$ in $L^{2,(\a-1)/2}$ and $\Op(b_L)P_{tm}(z)u\to P_{tm}(z)u$ in $L^{2,(1-\a)/2}$, it suffices to prove that $[P_{tm},\Op(b_L)]u\to 0$ in $L^{2,(1-\a)/2}$.
We learn
\begin{align*}
\|[P_{tm}(z), \Op(b_L)]u\|_{L^{2,(1-\a)/2}}\leq& \|[P_{tm}(z), \Op(b_L)]\Op(a_R)u\|_{L^{2,(1-\a)/2}}\\
&+\|[P_{tm}(z), \Op(b_L)](1-\Op(a_R))u\|_{L^{2,(1-\a)/2}}.
\end{align*}
Since $|\x|\sim |x|^{\a}$ on $a_R$, it follows that $[P_{tm}(z),\Op(b_L)]\Op(a_R)$ is uniformly bounded in $S^{0,\a-1}$ and converges to $0$ in $S^{0,(\a-1)/2+0}$. Lemma \ref{b_L} and $u\in L^{2,(\a-1)/2}$ imply
\begin{align*}
\limsup_{L\to \infty}\|[P_{tm}(z), \Op(b_L)]\Op(a_R)u\|_{L^{2,(1-\a)/2}}=0.
\end{align*}
Moreover, since  $u\in L^{2,(\a-1)/2}$ with $P_{tm}(z)u\in L^{2, (1-\a)/2}$, then the elliptic estimates (Proposition \ref{elliptic}) implies $(1-\Op(a_R))u\in H^{k_1, (1-\a)/2+\a l_1}$ for $k_1,l_1\geq 0$ with $k_1+l_1\leq 2$. In particular,
\begin{align*}
(1-\Op(a_R)) u\in \bigcap_{j=1}^2H^{j,\frac{\a+1}{2}+(j-1)\a}.
\end{align*}
Since $[P_{tm}(z), \Op(b_L)]$ is uniformly bounded in $\sum_{j=0}^2 S^{1-j, j\a-1}$ and converges to $0$ in $\sum_{j=0}^2 S^{1-j+\e, j\a-1+\e}$ for any $\e>0$, then Lemma \ref{b_L} gives
\begin{align*}
\limsup_{L\to \infty}\|[P_{tm}(z), \Op(b_L)](1-\Op(a_R))u\|_{L^{2,(1-\a)/2}}=0.
\end{align*}
This completes the proof.
\end{proof}

\begin{prop}
Let $I\subset \re$ be a relativity compact interval. Then there exists $C>0$ such that for $z\in I_{\sgn t}$ we have
\begin{align}
\|u\|_{L^{2,(\a-1)/2}}\leq C\|P_{tm}(z)u\|_{L^{2, (1-\a)/2}}+C\|u\|_{H^{-N,-N}},\,\, u\in \tilde{D}_{tm}(z),\label{13}\\
\|u\|_{L^{2,(\a-1)/2}}\leq C\|P_{tm}(z)^*u\|_{L^{2, (1-\a)/2}}+C\|u\|_{H^{-N,-N}},\,\, u\in\tilde{D}_{tm}(\bar{z}).  \label{14}
\end{align}
Moreover, $(\ref{13})$ and $(\ref{14})$ hold for $z\in I_{-\sgn t}$ though the constant $C>0$ depends on $\Im z$.
\end{prop}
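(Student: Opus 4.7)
The plan is a positive-commutator estimate anchored on Lemma \ref{mGar}, patched by the microlocal elliptic estimate of Proposition \ref{elliptic}, and then extended from $\mathcal{S}(\re^n)$ to $\tilde D_{tm}(z)$ via Lemma \ref{Ddense}.

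First I would extract the skew-Hermitian part of $P_{tm}(z)$. Since $P=P^{*}$, the asymptotic expansion gives
$$P_{tm}(z)=P-z+it\Op(H_p(m\log\jap{x}))+R,\qquad R\in\Op S^{0,-2+0},$$
so that, for $u\in\mathcal{S}(\re^n)$,
$$\Im(u,P_{tm}(z)u)_{L^2}=-\Im z\,\|u\|_{L^2}^{2}+t\,(u,\Op(H_p(m\log\jap{x}))u)_{L^2}+O\bigpare{\|u\|_{L^{2,-1+0}}^{2}}.$$
For $z\in I_{\sgn t}$ the sign of $-\Im z\|u\|^{2}$ cooperates with the sign $(\sgn t)\cdot(-(u,\Op(H_p(m\log\jap{x}))u))$ delivered by Lemma \ref{mGar}. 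Combining with Cauchy--Schwarz $|\Im(u,P_{tm}(z)u)|\le\e\|u\|_{L^{2,(\a-1)/2}}^{2}+C_\e\|P_{tm}(z)u\|_{L^{2,(1-\a)/2}}^{2}$ produces
$$|t|\,\|\Op(a_R)u\|_{L^{2,(\a-1)/2}}^{2}\le\e\|u\|_{L^{2,(\a-1)/2}}^{2}+C_\e\|P_{tm}(z)u\|_{L^{2,(1-\a)/2}}^{2}+C\|u\|_{L^{2,-1+0}}^{2}+|t(u,\Op(e)u)|.$$

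Next I would control the complementary microlocal region by Proposition \ref{elliptic}. Outside a compact set, $\supp(1-a_R)\subset\O_{R,\c,1}\cup\O_{R,\c,2}$ for some $\c>1$, and $m$ vanishes on $\supp(1-a_R)$, so $P_{tm}(z)$ and $P-z$ agree there modulo smoothing; this yields
$$\|(1-\Op(a_R))u\|_{L^{2,(\a-1)/2}}\le C\|P_{tm}(z)u\|_{L^{2,(1-\a)/2}}+C\|u\|_{H^{-N,-N}}$$
uniformly for $\Re z\in I$. The stray quadratic form $(u,\Op(e)u)$ with $e\in S_\a^{\a-1+0}$ is supported on the shell where $a_R$ transitions, which for $R$ large enough also lies in the elliptic region of $p$, so the same Proposition \ref{elliptic} absorbs it.

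Combining through $\|u\|_{L^{2,(\a-1)/2}}^{2}\le 2\|\Op(a_R)u\|_{L^{2,(\a-1)/2}}^{2}+2\|(1-\Op(a_R))u\|_{L^{2,(\a-1)/2}}^{2}$, choosing $\e$ small enough to absorb $\e\|u\|_{L^{2,(\a-1)/2}}^{2}$ back on the left, and handling the residual $\|u\|_{L^{2,-1+0}}$ by interpolating between $L^{2,(\a-1)/2}$ and $H^{-N,-N}$ (valid since $(\a-1)/2>-1+0$ for $\a>1$), yields (\ref{13}) on $\mathcal{S}(\re^n)$; Lemma \ref{Ddense} extends it to $\tilde D_{tm}(z)$. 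The adjoint bound (\ref{14}) is obtained identically: the skew-Hermitian part of $P_{tm}(z)^{*}$ equals $\Im z-t\Op(H_p(m\log\jap{x}))$ modulo lower order, whose signs cooperate the same way for $z\in I_{\sgn t}$, so Lemma \ref{mGar} fires as before. For $z\in I_{-\sgn t}$ the $\Im z\|u\|_{L^2}^{2}$ contribution has the wrong sign and must be absorbed via the interpolation $\|u\|_{L^2}^{2}\le\e\|u\|_{L^{2,(\a-1)/2}}^{2}+C_{\e,|\Im z|}\|u\|_{H^{-N,-N}}^{2}$, which generates the stated $\Im z$-dependent constant. The main obstacle I foresee is confirming that $\supp e$, which sits only a distance $\sim R^{-1}$ off the characteristic variety, genuinely lies in the elliptic set of Proposition \ref{elliptic}; this forces a careful simultaneous choice of $R$ compatible with both Lemma \ref{escape} and the elliptic estimate.
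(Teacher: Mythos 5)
Your proposal is correct and follows essentially the same positive-commutator argument as the paper's own proof: reduce to $u\in\mathcal{S}(\re^n)$ via Lemma \ref{Ddense}, take imaginary parts of $(u,P_{tm}(z)u)$ to invoke the sharp-G\aa rding inequality of Lemma \ref{mGar}, absorb the error quadratic form $(u,\Op(e)u)$ and the region off $\supp a_R$ by Proposition \ref{elliptic}, and close by interpolation. The one place you diverge is the $z\in I_{-\sgn t}$ case: you rerun the commutator identity and absorb the wrong-sign $|\Im z|\,\|u\|_{L^2}^2$ term by interpolation directly, whereas the paper first applies \eqref{13}--\eqref{14} for $\bar z\in I_{\sgn t}$, writes $P_{tm}(\bar z)=P_{tm}(z)+2i\Im z$, and then absorbs the resulting $|\Im z|\,\|u\|_{L^{2,(1-\a)/2}}$ term by the same interpolation--elliptic device; both routes give the same $\Im z$-dependent constant. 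Your flagged worry about $\supp e$ is legitimate in the sense that the paper does not spell it out, but it is resolvable without a delicate balance: once $R_0$ is fixed by Lemma \ref{escape}, the transition shell $\{1/R_0\le|(|\x|^2-|x|^{2\a})/(|\x|^2+|x|^{2\a})|\le 2/R_0\}$ satisfies $|\x|\ge\c_0|x|^\a$ or $|x|^\a\ge\c_0|\x|$ with the fixed $\c_0=\sqrt{(R_0+1)/(R_0-1)}>1$, and the remaining compact part is swallowed by $\O_{loc}$ after choosing $M$ large enough, so Proposition \ref{elliptic} applies with these fixed parameters.
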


\begin{proof}
First, we assume $z\in I_{\sgn t}$.
We  prove $(\ref{13})$ only. Since $P_{tm}(z)^*=(P-z)^*-it\Op(H_p(m\log\jap{x}))+\Op S^{0,-2+0}$ holds, $(\ref{14})$ is similarly proved. By Lemma \ref{Ddense}, we may assume $u\in \mathcal{S}(\re^n)$.
By Lemma \ref{mGar} and $t\Im z\geq 0$, then
\begin{align*}
-(\sgn t)\Im(u,P_{tm}(z)u)_{L^2}\geq& \|\Op(a_R)u\|_{L^{2,(\a-1)/2}}^2-C\|u\|_{L^{2,-1+0}}^2+(u, \Op(e)u)_{L^2}
\end{align*}
for $u\in \mathcal{S}(\re^n)$.
Since $t\Im z\geq 0$, then we have
\begin{align}
\|\Op(a_R)u\|_{L^{2,(\a-1)/2}}^2\leq& C\|P_{tm}(z)u\|_{L^{2,(1-\a)/2}}\|u\|_{L^{2,(\a-1)/2}}+C\|u\|_{L^{2,-1+0}}^2\label{11}\\
&+|(u, \Op(e)u)_{L^2}|.\nonumber
\end{align}
By the elliptic estimate (Proposition \ref{elliptic}) and the interpolation estimate, we have
\begin{align}\label{12}
\|&(1-\Op(a_R))u\|_{L^{2,(\a-1)/2}}^2+\|u\|_{L^{2,-1+0}}^2+|(u, \Op(e)u)_{L^2}|\\
&\leq C\|\Op(a_R)u\|_{L^{2,-1+0}}^2+C\|P_{tm}(z)u\|_{L^{2,(1-\a)/2}}^2+C\|u\|_{H^{-N,-N}}^2\nonumber\\
&\leq \frac{1}{2}\|\Op(a_R)u\|_{L^{2,(\a-1)/2}}^2+C\|P_{tm}(z)u\|_{L^{2,(1-\a)/2}}^2+C\|u\|_{H^{-N,-N}}^2.\nonumber
\end{align}
By using $(\ref{11})$, $(\ref{12})$ and the Cauchy-Schwarz inequality, we obtain $(\ref{13})$ for $u\in \mathcal{S}(\re^n)$.

Next, we prove that $(\ref{13})$ and $(\ref{14})$ hold for $z\in I_{-\sgn t}$ though the constant $C>0$ depends on $\Im z$. In fact, since $(\a-1)/2>(1-\a)/2$, then the elliptic estimate and the interpolation inequality implies that for any $\e_1>0$,
\begin{align*}
|\Im z|\|u\|_{L^{2, (1-\a)/2}}^2\leq& \e_1\|u\|_{L^{2, (\a-1)/2}}^2+C\|u\|_{L^{2,-N-N\a}}^2\\
\leq & \e_1\|u\|_{L^{2, (\a-1)/2}}^2+C\|P_{tm}(z)\|_{L^{2,(1-\a)/2}}^2+C\|u\|_{H^{-N,-N}}^2.
\end{align*}
Taking $\e_1>0$ small enough and use $(\ref{13})$ and $(\ref{14})$ for $\bar{z}$, we obtain $(\ref{13})$ for $z\in I_{-\sgn t}$. 
\end{proof}

\begin{rem}\label{Fredinv}
Suppose $t\geq 0$. If $\Im z$ is large enough, then
\begin{align}\label{15}
\|u\|_{L^{2,(\a-1)/2}}\leq C\|P_{tm}(z)u\|_{L^{2,(\a-1)/2}}.
\end{align}
In fact, in $(\ref{11})$, we have a stronger bound:
\begin{align*}
\|\Op(a_R)u\|_{L^{2,(\a-1)/2}}^2+\Im z\|u\|_{L^2}^2\leq (\text{RHS of $(\ref{11})$}).
\end{align*}
Hence the argument after $(\ref{11})$ implies
\begin{align*}
(1+\e)\|u\|_{L^{2,(\a-1)/2}}^2+\Im z\|u\|_{L^2}^2\leq C\|P_{tm}(z)u\|_{L^{2,(\a-1)/2}}^2+C\|u\|_{H^{-N,-N}}^2.
\end{align*}
We use the trivial bounds $\|u\|_{H^{-N,-N}}\leq \|u\|_{L^2}$ $\|u\|_{H^{-N,-N}}\leq \|u\|_{L^{2,(\a-1)/2}}$ and we obtain $(\ref{15})$. Similarly, for $t\leq 0$, $(\ref{15})$ holds if $-\Im z$ is large enough.
\end{rem}

\begin{cor}\label{frecor}
A map
\begin{align}\label{P_tmmap}
P_{tm}(z):\tilde{D}_{tm}(z)\to L^{2,(1-\a)/2}
\end{align}
is a Fredholm operator. Moreover, if $t\Im z\geq 0$ holds and $|\Im z|$ is large enough, then $P-z$ is invertible. 
Furthermore, $(\ref{P_tmmap})$ is an analytic family of Fredholm operators with index zero.  Moreover, there exists a discrete set $T_{\a,t}\subset \mathbb{C}$ such that $(\ref{P_tmmap})$ is invertible for $z\in \mathbb{C}\setminus T_{\a,t}$.
\end{cor}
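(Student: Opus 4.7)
The plan is to extract everything from the two a priori estimates (\ref{13}) and (\ref{14}) combined with a standard compactness argument in the pair of weighted spaces, and then feed the result into analytic Fredholm theory.

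First, I would establish the Fredholm property of $P_{tm}(z):\tilde D_{tm}(z)\to L^{2,(1-\a)/2}$ by the usual semi-Fredholm recipe. Fix $N$ so large that the inclusion $L^{2,(\a-1)/2}\hookrightarrow H^{-N,-N}$ is compact (this is the standard Rellich-type compactness between weighted Sobolev spaces). Then (\ref{13}) with $u\in\Ker P_{tm}(z)$ reads $\|u\|_{L^{2,(\a-1)/2}}\le C\|u\|_{H^{-N,-N}}$, so the kernel, being closed and bounded in $L^{2,(\a-1)/2}$ and relatively compact in $H^{-N,-N}$, is finite dimensional. A similar argument using (\ref{13}) shows that $P_{tm}(z)$ has closed range: a Cauchy sequence in the range together with the compact inclusion lets one upgrade a convergent subsequence of preimages modulo the kernel. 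The orthogonal complement of the range inside the dual pairing is contained in $\Ker P_{tm}(z)^{*}=\Ker P_{tm}(\bar z)^{\mathrm{formal}}$, and here the dual estimate (\ref{14}) gives finite dimensionality of the cokernel by the same compactness argument. This yields the Fredholm property of (\ref{P_tmmap}).

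Next, I would handle invertibility for large $|\Im z|$ with $t\Im z\ge 0$: this is exactly Remark \ref{Fredinv}, since (\ref{15}) together with its analogue for the adjoint shows that $P_{tm}(z)$ is injective with closed range and dense range, hence invertible. For analyticity, $z\mapsto P_{tm}(z)=P_{tm}(0)-z$ is an affine, hence entire, family of bounded operators $\tilde D_{tm}(z)\to L^{2,(1-\a)/2}$ (the graph-norm domain is naturally identified independently of $z$ by the closed-graph theorem, because changing $z$ is a bounded perturbation on the target). Constancy of the Fredholm index on connected components of the resolvent set of Fredholm operators then forces $\mathrm{index}\,P_{tm}(z)\equiv \mathrm{index}\,P_{tm}(z_0)=0$ at any $z_0$ with $t\Im z_0\gg 0$, giving index zero everywhere.

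Finally, for the discrete set: the analytic Fredholm theorem applied to the analytic family of Fredholm operators of index zero on the connected set $\mathbb C$ implies that either $P_{tm}(z)$ is invertible nowhere, or the set $T_{\a,t}$ of $z$ where $P_{tm}(z)$ fails to be invertible is discrete in $\mathbb C$. Since the first alternative is excluded by the large-$|\Im z|$ invertibility just established, $T_{\a,t}$ is discrete, which finishes the proof of (ii). The main obstacle I anticipate is a careful treatment of the Fredholm argument in the variable-order framework: one must check that the domain $\tilde D_{tm}(z)$ can indeed be treated as a fixed Banach space (equivalent graph norms as $z$ varies over compact sets) so that ``analytic family of Fredholm operators'' is meaningful, and that the dual characterisation used to control the cokernel lines up correctly with the adjoint estimate (\ref{14}); both hinge on the density statement Lemma \ref{Ddense}.
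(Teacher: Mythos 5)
Your proof is correct and follows essentially the same route as the paper's: the a priori estimates (\ref{13})--(\ref{14}) plus the compact embedding $L^{2,(\a-1)/2}\hookrightarrow H^{-N,-N}$ give semi-Fredholmness of $P_{tm}(z)$ and of its adjoint (the paper invokes \cite[Prop.\ 19.1.3]{Ho} to package exactly the compactness argument you spell out, and uses Lemma~\ref{Ddense} to identify $\Ker P_{tm}(z)^*$ with distributional solutions, matching your appeal to the density lemma), the invertibility for $t\Im z\ge 0$ and large $|\Im z|$ is Remark~\ref{Fredinv}, and the discrete exceptional set comes from the analytic Fredholm theorem. Your extra remarks — that $\tilde D_{tm}(z)$ is $z$-independent as a Banach space because $\a>1$ forces $L^{2,(\a-1)/2}\subset L^{2,(1-\a)/2}$, so $z\mapsto P_{tm}(z)$ is an affine family on a fixed domain, and that the index is locally constant hence identically zero — make explicit what the paper leaves implicit in citing \cite[Thm.\ D.4]{Z}, but do not change the structure of the argument.
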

\begin{rem}\label{remFre2}
Remark \ref{Fredinv} implies that $P_{tm}(z)$ is invertible for $t\geq0$ and for large $\Im z >0$. In fact, the injectivity of $P_{tm}(z)$ follows from $(\ref{15})$ and the surjectivity follows from the injectivity of $P_{tm}(z)^*$.
\end{rem}

\begin{proof}
First, we prove that $\Ker P_{tm}(z)<\infty$ is of finite dimension and $\Ran P_{tm}(z)$ is closed. Let a bounded sequence $u_k \in\tilde{D}_{tm}(z)$ such that $P_{tm}(z)u_k$ is convergent in $L^{2,(1-\a)/2}$. Due to \cite[Proposition 19.1.3]{Ho}, it suffices to prove that $u_k$ has a convergent subsequence in $\tilde{D}_{tm}(z)$. It easily follows from $(\ref{13})$ and the compactness of the inclusion $L^{2,(\a-1)/2}\subset H^{-N,-N}$. 

Next, we prove that the cokernel of $P_{tm}(z)$ is of finite dimension. To do this, it suffices to prove that the kernel of $P_{tm}(z)^*: L^{2,(\a-1)/2}\to \tilde{D}_{tm}(z)^*$ is of finite dimension.  
By definition, we have
\begin{align*}
\Ker P_{tm}(z)^*=&\{u\in L^{2,(\a-1)/2}\mid  (u,P_{tm}(z)w)_{L^2}=0,\, \forall w\in \tilde{D}_{tm}(z) \}\\
=&\{u\in L^{2,(\a-1)/2}\mid  (u,P_{tm}(z)w)_{L^2}=0,\, \forall w\in \mathcal{S}(\re^n) \},
\end{align*}
where we use Lemma \ref{Ddense} in the second line. If $u\in L^{2,(\a-1)/2}$ satisfies $P_{tm}(z)^*u=0$, then this equality holds in the distributional sense. The claim follows same as in the first half part of this proof.

The invertibility of $(\ref{P_tmmap})$ when $t\Im z\geq 0$ and when $|\Im z|$ is large follows from Remark \ref{Fredinv} and its dual statement. The analytic Fredholm theorem  \cite[Theorem D.4]{Z} imply existence of $T_{\a,t}$ as above.

\end{proof}

\begin{proof}[Proof of Theorem \ref{thm1}] Theorem \ref{thm1} follows from $(\ref{unitary})$ and Corollary \ref{frecor}.

\end{proof}

\section{Proof of Theorem \ref{thm2}}

\subsection{Outgoing/incoming parametrices}

In this subsection, we construct outgoing/incoming parametrices of a solution to $Pu=zu$. 
Set
\begin{align*}
S^{k}(\re^n)=\{a\in C^{\infty}(\re^n\setminus\{0\})\mid |\pa_{x}^{\b}a(x)|\leq C_{\b}\jap{x}^{k-|\b|},\,\, \text{for}\,\, |x|>1\}.
\end{align*}
Moreover, we frequently use the following notation:
\begin{align*}
\hat{x}=x/|x|.
\end{align*}

The main result of this subsection is the following theorem. 

\begin{thm}\label{parathm}
Fix a signature $\pm$ and $a\in C^{\infty}(\mathbb{S}^{n-1})$. Then there exists $\f_{\pm}\in S^{1+\a}(\re^n)$ such that
\begin{align*}
&\f_{\pm}\mp\frac{|x|^{1+\a}}{1+\a} \mp z\frac{|x|^{1-\a}}{2(1-\a)}\in S^{1+\a-\m}(\re^n),\,\,\Im(\f_{\pm}\mp z\frac{|x|^{1-\a}}{2(1-\a)})\in S^0(\re^n),\\
&e^{-i\f}(-\Delta-|x|^{2\a}+\Op(V)-z)(e^{i\f}b)\in S^{-\frac{n+1-\a}{2}-\m}(\re^n),
\end{align*}
where $b(x)=|x|^{-\frac{n-1+\a}{2}}\bar{\chi}(|x|/R)a(\hat{x})\in S^{-\frac{n-1+\a}{2}}(\re^n)$ and $\hat{x}=x/|x|$.
\end{thm}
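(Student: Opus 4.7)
The plan is a standard WKB construction, adapted to the repulsive geometry in which the phase must grow like $|x|^{1+\a}$. I would make the ansatz $\f_{\pm} = \sum_{j=0}^{N}\f_j$ with $\f_0 = \pm |x|^{1+\a}/(1+\a)$, so that $\nabla\f_0 = \pm|x|^{\a}\hat x$ and $|\nabla\f_0|^2 - |x|^{2\a} = 0$, canceling the top-order symbol of $-\Delta - |x|^{2\a}$ under conjugation. The subsequent corrections are then constructed inductively by solving the linearized eikonal equation along radial characteristics: given $\f^{(j)} = \sum_{i\leq j}\f_i$ and the residual
\begin{align*}
R_j(x) = |\nabla\f^{(j)}(x)|^2 + V(x, \nabla\f^{(j)}(x)) - |x|^{2\a} - z,
\end{align*}
I choose $\f_{j+1}$ to solve $\pm 2|x|^{\a}\pa_r\f_{j+1}(r\o) = -R_j(r\o)$ with zero boundary data on $|x|=R_0$; these are exactly the characteristics of the vector field $\pm 2|x|^{\a}\hat x\cdot\nabla = 2\nabla\f_0\cdot\nabla$.

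For $j=0$, one has $R_0 = V(x,\pm|x|^{\a}\hat x) - z \in S^{2\a - \m}(\re^n)$, and integration gives
\begin{align*}
\f_1(x) = \pm \frac{z|x|^{1-\a}}{2(1-\a)} + \psi_1(x),
\end{align*}
where $\psi_1 \in S^{1-\m}(\re^n) \subset S^{1+\a-\m}(\re^n)$ is real-valued (because $V$ is real) and absorbs the $V$-contribution. For $j \geq 1$, the Taylor expansion of $V$ in $\x$ (exact, since $V$ is at most quadratic) combined with the choice of $\f_{j+1}$ yields the recursion $R_{j+1}\in S^{2\a-(j+2)\m}(\re^n)$, and hence $\f_{j+1}\in S^{1+\a-(j+1)\m}(\re^n)$: each correction gains one factor of $\m$. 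Stopping after $N \geq 1 + (\a+1)/\m$ steps forces $R_N\cdot b$ to lie below the target order. The claim $\Im(\f_{\pm}\mp z|x|^{1-\a}/(2(1-\a)))\in S^0(\re^n)$ follows because for $j \geq 2$ the imaginary part of $R_{j-1}$ is driven only by factors of $\Im z$, each of which carries the extra decay $\jap{x}^{-\a}$ inherited from $\nabla(\pm z|x|^{1-\a}/(2(1-\a)))$; integrating against $s^{-\a}$ then converges since $\a>1$, giving $\Im\f_j\in S^0(\re^n)$ for $j\geq 2$.

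The prescribed amplitude $b(x) = |x|^{-(n-1+\a)/2}\bar{\chi}(|x|/R)a(\hat x)$ is tailored so that the leading transport term vanishes identically for $|x|>2R$. Using $\Delta|x|^{1+\a} = (1+\a)(n+\a-1)|x|^{\a-1}$ together with $\hat x\cdot\nabla_x[a(\hat x)]=0$, a direct computation gives
\begin{align*}
2\nabla\f_0\cdot\nabla b + (\Delta\f_0)\,b = \bigl[\mp(n-1+\a)\pm(n+\a-1)\bigr]|x|^{(\a-n-1)/2}a(\hat x) = 0.
\end{align*}
Expanding $e^{-i\f}(-\Delta-|x|^{2\a}+\Op(V)-z)(e^{i\f}b)$ then produces $R_N\cdot b$ (absorbed by the eikonal iteration), together with $-\Delta b$ and transport-type terms of the form $-i(2\nabla\f_j\cdot\nabla b + (\Delta\f_j)b) - i\pa_{\x}V(x,\nabla\f)\cdot\nabla b + \cdots$ coming from the (finite) Weyl expansion of $e^{-i\f}\Op(V)e^{i\f}$. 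Each such term picks up either one extra factor of $\jap{x}^{-\m}$ (from $\nabla\f_j$ with $j\geq 1$ or from $\pa_{\x}V$) or stronger decay (from $\Delta b$), placing it into $S^{-(n+1-\a)/2-\m}(\re^n)$.

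The main obstacle is the anisotropic symbol bookkeeping: the target class is exactly one $\m$ below what comes out of the leading transport cancellation, so one must verify that \emph{each} subprincipal contribution --- from the eikonal iteration, from the Weyl subprincipal of $\Op(V)$, from cutoff derivatives of $\bar{\chi}(|x|/R)$, and from $\Delta b$ --- genuinely carries the extra $\jap{x}^{-\m}$ gain. The hypothesis $\m>0$ supplies this gain at each step; $\a>1$ is used both to ensure that $\f_{j+1}$ is genuinely of lower order than $\f_j$ and to guarantee the convergence of $\int^{\infty}s^{-\a}\,ds$ used in bounding the imaginary parts of $\f_j$ for $j\geq 2$.
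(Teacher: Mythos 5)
Your proposal follows essentially the same route as the paper: Proposition \ref{eikonal} constructs the phase by the identical radial-characteristic iteration, Lemma \ref{paramain} uses the same amplitude $b$ so that $2\nabla\f_0\cdot\nabla b+(\Delta\f_0)b$ vanishes for $|x|>2R$, and Lemma \ref{paraper} handles $\Op(V)$ by the same exact quadratic Taylor expansion in $\x$; the only cosmetic difference is that the paper puts the $\pm z\frac{|x|^{1-\a}}{2(1-\a)}$ term into $\f_{0,\pm}$ from the start (so that $(\nabla\f_{0,\pm})^2-|x|^{2\a}-z=\frac{z^2}{4}|x|^{-2\a}$), whereas you generate it from the $-z$ in $R_0$. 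One small slip worth flagging: since $V(x,\nabla\f_0)\in S^{2\a-\m}(\re^n)$, dividing by $2|x|^{\a}$ and integrating once in $r$ yields $\psi_1\in S^{1+\a-\m}(\re^n)$, not $S^{1-\m}(\re^n)$; the inclusion you write afterwards ($\subset S^{1+\a-\m}$) is the correct order and is what the rest of your argument actually uses, so nothing downstream breaks.
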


Theorem \ref{parathm} is proved by Propositions \ref{paraprop} and \ref{eikonal} below.

\begin{prop}\label{paraprop}
Fix a signature $\pm$, $z\in \mathbb{C}$ and $a\in C^{\infty}(\mathbb{S}^{n-1})$. Set $b(x)=|x|^{-\frac{n-1+\a}{2}}\bar{\chi}(|x|/R)a(\hat{x})\in S^{-\frac{n-1+\a}{2}}(\re^n)$. Let $\f_{\pm,z}\in S^{1+\a}(\re^n)$ be satisfying
\begin{align*}
\f_{\pm,z}\mp\frac{|x|^{1+\a}}{1+\a}\mp z\frac{|x|^{1-\a}}{2(1-\a)} \in S^{1+\a-\m}(\re^n),\,\, \Im(\f_{\pm,z}\mp z\frac{|x|^{1-\a}}{2(1-\a)})\in S^0(\re^n).
\end{align*}
Then we have
\begin{align*}
&e^{-i\f_{\pm,z}}(-\Delta-|x|^{2\a}+\Op(V)-z)(e^{i\f_{\pm,z}}b)\\
&=((\nabla\f)^2-|x|^{2\a}+V(x,\nabla\f_{\pm.z}(x))-z)b(x)+ S^{-\frac{n+1-\a}{2}-\m}(\re^n).
\end{align*}
\end{prop}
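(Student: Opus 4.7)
The plan is to compute $e^{-i\f}(-\Delta-|x|^{2\a}+\Op(V)-z)(e^{i\f}b)$ by applying each summand to $e^{i\f}b$ and showing the residual, after subtracting the main term $\bigl((\nabla\f)^2-|x|^{2\a}+V(x,\nabla\f)-z\bigr)b$, lies in $S^{-(n+1-\a)/2-\m}(\re^n)$. Multiplication by $-|x|^{2\a}$ and by $-z$ contribute exactly, so the real work is in the Laplacian and $\Op(V)$ pieces.

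For the Laplacian, the Leibniz rule yields
\begin{align*}
-\Delta(e^{i\f}b)=e^{i\f}\bigl((\nabla\f)^2b-2i\nabla\f\cdot\nabla b-i(\Delta\f)b-\Delta b\bigr),
\end{align*}
so the task is to place $-2i\nabla\f\cdot\nabla b-i(\Delta\f)b-\Delta b$ in $S^{-(n+1-\a)/2-\m}$. Here $-\Delta b\in S^{-(n+3+\a)/2}$ is easily absorbed. The crucial step is a leading-order cancellation between the other two terms: writing $\f=\pm|x|^{1+\a}/(1+\a)+\psi$ with $\psi\in S^{1+\a-\m}$ (the $z$-correction $\pm z|x|^{1-\a}/(2(1-\a))$ has gradient in $S^{-\a}\subset S^{\a-\m}$ and Laplacian in $S^{-\a-1}\subset S^{\a-1-\m}$, so it is absorbed into $\psi$ for our purposes), we have $\nabla\f=\pm|x|^{\a-1}x+S^{\a-\m}$ and $\Delta\f=\pm(n+\a-1)|x|^{\a-1}+S^{\a-1-\m}$. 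For $|x|\geq 2R$, $b=|x|^{-(n-1+\a)/2}a(\hat x)$, and since the angular component of $\nabla b$ is orthogonal to $x$, a direct computation gives $x\cdot\nabla b=-\tfrac{n-1+\a}{2}|x|^{-(n-1+\a)/2}a(\hat x)$. Consequently the leading pieces of $-2i\nabla\f\cdot\nabla b$ and of $-i(\Delta\f)b$ are $\pm i(n-1+\a)|x|^{-(n+1-\a)/2}a(\hat x)$ and $\mp i(n+\a-1)|x|^{-(n+1-\a)/2}a(\hat x)$ respectively, and they cancel. The subleading contributions, obtained by pairing the $S^{\a-\m}$ error of $\nabla\f$ against $\nabla b\in S^{-(n+1+\a)/2}$ and the $S^{\a-1-\m}$ error of $\Delta\f$ against $b\in S^{-(n-1+\a)/2}$, land in $S^{-(n+1-\a)/2-\m}$ by direct order-counting.

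For $\Op(V)$, I split $V=V_0+V_1+V_2$ with $V_j\in S^{j,\a(2-j)-\m}$. The scalar piece $V_0=c(x)$ is multiplication and contributes exactly $c(x)b=V_0(x,\nabla\f)b$. For $V_1=\sum b_j(x)\x_j$, writing its Weyl quantization as $\sum b_jD_j+\tfrac{1}{2i}\sum\pa_jb_j$ and using $D_j(e^{i\f}b)=e^{i\f}(\pa_j\f\cdot b+D_jb)$, one obtains
\begin{align*}
e^{-i\f}\Op(V_1)(e^{i\f}b)=V_1(x,\nabla\f)b+\sum b_jD_jb+\tfrac{1}{2i}\sum(\pa_jb_j)b,
\end{align*}
with both correction terms in $S^{-(n+1-\a)/2-\m}$ by $b_j\in S^{\a-\m}$ and $D_jb\in S^{-(n+1+\a)/2}$. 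For $V_2=\sum a_{jk}\x_j\x_k$ I would appeal to the standard Taylor expansion of the Weyl symbol of $e^{-i\f}\Op(V_2)e^{i\f}$ around $\x=\nabla\f(x)$: the leading symbol is $V_2(x,\nabla\f)$, and the subleading corrections, of the forms $(\pa_{\x_j}V_2)(x,\nabla\f)D_jb$, $a_{jk}(\pa_j\pa_k\f)b$, and $a_{jk}D_jD_kb$, each lie in $S^{-(n+1-\a)/2-\m}$ by direct order-count using $a_{jk}\in S^{-\m}$ and $\pa\f\in S^\a$.

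The main obstacle is the bookkeeping around the transport cancellation: individually, $-2i\nabla\f\cdot\nabla b$ and $-i(\Delta\f)b$ sit in $S^{-(n+1-\a)/2}$, which is $\m$ orders too coarse for the target error class, so without that leading cancellation the proposition would fail. The choice $b=|x|^{-(n-1+\a)/2}a(\hat x)$ is precisely what makes the classical transport equation hold to leading order; verifying that every subleading term from both the Laplacian and the $\Op(V)$ computations genuinely picks up the extra factor $\jap{x}^{-\m}$ is where care is required.
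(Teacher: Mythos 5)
Your proof is correct, and the essential computation — the exact cancellation between $-2i\nabla\f_0\cdot\nabla b$ and $-i(\Delta\f_0)b$ forced by the choice $b=|x|^{-(n-1+\alpha)/2}\bar\chi(|x|/R)a(\hat x)$, with the cutoff derivative absorbed in $C_c^\infty$ and all $S^{1+\alpha-\mu}$-perturbations of $\f_0$ producing errors of order $S^{-(n+1-\alpha)/2-\mu}$ — is exactly how the paper handles the Laplacian and the scalar terms. The difference lies in the treatment of $\Op(V)$. The paper proves a single auxiliary lemma for the full symbol $V$: it writes the Weyl kernel, substitutes $\xi\to\xi+\gamma(x,y)$ with $\gamma(x,y)=\int_0^1\nabla\f(tx+(1-t)y)\,dt$, uses that $V$ is a polynomial of degree $2$ in $\xi$ to get an exact two-term Taylor expansion, and integrates by parts; the leading term becomes $V(x,\nabla\f)b$ and both correction terms land in $S^{k+\alpha-\mu-1}$ in one stroke. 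You instead split $V=V_0+V_1+V_2$ by $\xi$-degree and do an explicit Leibniz/Weyl-ordering computation for $V_0,V_1$, appealing to the same Taylor-around-$\nabla\f$ device only for $V_2$; that is perfectly valid, and arguably more transparent for the first-order piece, but it amounts to applying the paper's lemma piecemeal. One small bookkeeping omission in your $V_2$ list of correction terms: when the $y$-derivative in the Weyl integral lands on the midpoint argument $\tfrac{x+y}{2}$ of $a_{jk}$, you also pick up a term of the schematic form $(\partial_x a_{jk})(\partial_k\f)\,b$, of order $S^{-\mu-1}\cdot S^{\alpha}\cdot S^{k}=S^{k+\alpha-\mu-1}$; it is in the right class, so the conclusion is unaffected, but it should be listed alongside $(\partial_{\xi_j}V_2)(x,\nabla\f)D_jb$, $a_{jk}(\partial_j\partial_k\f)b$ and $a_{jk}D_jD_kb$ to make the enumeration exhaustive.
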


Proposition \ref{paraprop} directly follows from Lemmas \ref{paramain} and \ref{paraper} below.
\begin{lem}\label{paramain}
Fix a signature $\pm$ and $z\in \mathbb{C}$. Let $\f_{\pm,z}$ and $b$ be as in the above proposition. 
Then
\begin{align*}
e^{-i\f_{\pm,z}}(-\Delta-|x|^{2\a}-z)(e^{i\f_{\pm,z}}b)=((\nabla\f_{\pm,z})^2-|x|^{2\a}-z)b+S^{-\frac{n+1-\a}{2}-\m}(\re^n).
\end{align*}
\end{lem}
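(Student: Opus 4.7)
The plan is to reduce the claim to an explicit computation via the standard conjugation identity
\[
e^{-i\f}(-\Delta)(e^{i\f}b) = -\Delta b - 2i\nabla\f\cdot\nabla b + (\nabla\f)^2\,b - i(\Delta\f)\,b,
\]
which yields
\[
e^{-i\f_{\pm,z}}(-\Delta - |x|^{2\a} - z)(e^{i\f_{\pm,z}}b) = \bigl((\nabla\f_{\pm,z})^2-|x|^{2\a}-z\bigr)b + r,
\]
with remainder $r = -\Delta b - 2i\nabla\f_{\pm,z}\cdot\nabla b - i(\Delta\f_{\pm,z})\,b$. It then suffices to show that $r\in S^{-(n+1-\a)/2-\m}(\re^n)$.

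To analyze $r$ I would split $\f_{\pm,z} = \f^{(1)} + \f^{(2)} + \f_1$, where $\f^{(1)}=\pm|x|^{1+\a}/(1+\a)$, $\f^{(2)}=\pm z|x|^{1-\a}/(2(1-\a))\in S^{1-\a}$, and $\f_1\in S^{1+\a-\m}$ by hypothesis. Writing $\b=(n-1+\a)/2$, the orders are
$\Delta b \in S^{-\b-2}$, and for $\f_1$: $(\Delta\f_1)b\in S^{-\b+\a-1-\m}=S^{-(n+1-\a)/2-\m}$ and $\nabla\f_1\cdot\nabla b\in S^{\a-\m-\b-1}=S^{-(n+1-\a)/2-\m}$, which is exactly the target order. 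The analogous contributions from $\f^{(2)}$ are easily seen to lie in $S^{-(3\a+n+1)/2}$, which is strictly lower since $\m<1/2<2\a$. So all of these land in $S^{-(n+1-\a)/2-\m}$ for free.

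The only step requiring cancellation, and the heart of the lemma, is that the individually-large contributions from $\f^{(1)}$—namely $-i(\Delta\f^{(1)})b$ and $-2i\nabla\f^{(1)}\cdot\nabla b$, each of order $-(n+1-\a)/2$—must combine into something of order $-(n+1-\a)/2-1$ (hence acceptable). For $|x|>2R$ we compute $\nabla\f^{(1)}=\pm|x|^{\a}\hat{x}$ and $\Delta\f^{(1)}=\pm(n+\a-1)|x|^{\a-1}$; using the elementary identity $\hat{x}\cdot\nabla_x a(\hat{x})=0$ for any $0$-homogeneous function of $\hat{x}$, the leading part of $\nabla b$ in the radial direction is $-\b|x|^{-\b-1}a(\hat{x})\hat{x}$, so
\[
-i(\Delta\f^{(1)})b - 2i\nabla\f^{(1)}\cdot\nabla b = \mp i\,a(\hat{x})|x|^{\a-1-\b}\bigl[(n+\a-1)-2\b\bigr] + (\text{lower order}),
\]
and the bracket vanishes by the very choice $\b=(n-1+\a)/2$. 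The remainder comes from the angular derivative of $a(\hat{x})$ and from $\bar\chi(|x|/R)$ (which is supported on an annulus and hence Schwartz), producing terms in $S^{\a-\b-2}=S^{-(n+1-\a)/2-1}$. Combining all contributions yields $r\in S^{-(n+1-\a)/2-\m}$, which is the claim.

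The only genuinely delicate step is the cancellation in the last paragraph: it is essentially the transport equation for the WKB amplitude, and the choice $\b=(n-1+\a)/2$ is the analogue of the classical half-density factor $|x|^{-(n-1)/2}$ in Helmholtz scattering, adjusted for the repulsive potential. Everything else is direct symbol-class bookkeeping.
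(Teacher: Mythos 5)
Your proof is correct and follows essentially the same approach as the paper: both expand $e^{-i\f}(-\Delta)e^{i\f}b$, discard the contributions of $\f-\f_0$ (and of $\f^{(2)}$ and $\Delta b$) by symbol-order bookkeeping, and then observe that the two individually large $\f_0$-terms cancel precisely because of the choice $\b=(n-1+\a)/2$ in $b$. One minor imprecision: the angular derivative of $a(\hat{x})$ contributes nothing at all to $\nabla\f^{(1)}\cdot\nabla b$ (it is orthogonal to $\hat{x}$ and $\nabla\f^{(1)}$ is radial), so the cancellation is exact and the only surviving term is the compactly supported $\bar{\chi}'$ piece, as the paper's explicit computation shows.
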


\begin{proof}
Set $k=-\frac{n-1+\a}{2}$, then we note $k+\a-\m-1=-\frac{n+1-\a}{2}-\m$. We write $\f=\f_{\pm}$ and $\f_0=\f_{0,\pm}=\pm|x|^{1+\a}/(1+\a)$. By a simple calculation, we have
\begin{align*}
e^{-i\f}(-\Delta-|x|^{2\a}-z)(e^{i\f}b)=&((\nabla\f)^2-|x|^{2\a}-z)b-i(2\nabla\f\cdot\nabla b+(\Delta\f)b)-(\Delta b).
\end{align*}
Due to $b\in S^k$ and $\f-\f_0\in S^{1+\a-\m}$, we observe
\begin{align*}
\Delta b,\,2\nabla(\f-\f_0)\cdot \nabla b+\Delta(\f-\f_0)b\in S^{k+\a-\m-1}(\re^n).
\end{align*}
Thus, it suffices to prove 
\begin{align*}
2\nabla \f_0\cdot \nabla b+(\Delta\f_0)b\in S^{k+\a-\m-1}(\re^n).
\end{align*}
Since $\nabla\f_0=\pm |x|^{\a-1}x,\,\,\Delta\f_0=\pm (n-1+\a)|x|^{\a-1}$, we obtain
\begin{align*}
2\nabla \f_0\cdot \nabla b+(\Delta\f_0) b =&\pm(2|x|^{\a}\pa_rb(x)+(n-1+\a)|x|^{n-1+\a} b)\\
=&\pm \frac{2}{R}|x|^{k+\a}a(\hat{x})(\bar{\chi})'(|x|/R) \in C_c^{\infty}(\re^n) \subset S^{k+\a-\m-1}(\re^n).
\end{align*}
\end{proof}

\begin{lem}\label{paraper}
Let $k\in \re$, $\f\in S^{1+\a}(\re^n)$ and $b\in S^k(\re^n)$. Set $\g(x,y)=\int_0^1\nabla\f(tx+(1-t)y)dt$. Then
\begin{align*}
e^{-i\f(x)}\Op(V)e^{i\f}b(x)=V(x,\nabla\f(x))b(x)+L(x),
\end{align*}
where $L\in S^{k+\a-\m-1}(\re^n)$ is defined by
\begin{align*}
L(x)=D_y(\pa_{\x}V(x,\g(x,y)) b(x))|_{x=y}+(D_y^2(\pa_{\x}^2V(\frac{x+y}{2},0) b(y))|_{x=y}.
\end{align*}
\end{lem}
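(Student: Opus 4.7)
The decisive feature is that $V(x,\x)$ is polynomial of degree at most $2$ in $\x$, so $\Op(V)$ is a genuine second-order differential operator and the Weyl expansion terminates. I would start from the integral representation
\[
e^{-i\f(x)}\Op(V)(e^{i\f}b)(x) = \frac{1}{(2\pi)^n}\iint e^{i(x-y)\cdot\x + i(\f(y)-\f(x))} V(\tfrac{x+y}{2},\x) b(y)\,dy\,d\x
\]
and apply the identity $\f(y) - \f(x) = -(x-y)\cdot \g(x,y)$ (the fundamental theorem of calculus applied to $t \mapsto \f(tx+(1-t)y)$), so that the total phase collapses to $(x-y)\cdot(\x - \g(x,y))$. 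This makes $\x = \g(x,y)$ the natural expansion point.

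Since $V$ is quadratic in $\x$, the Taylor expansion
\[
V(\tfrac{x+y}{2},\x) = V(\tfrac{x+y}{2},\g) + \pa_{\x}V(\tfrac{x+y}{2},\g)\cdot(\x-\g) + \tfrac{1}{2}\pa_{\x}^2V(\tfrac{x+y}{2},0)(\x-\g)^{\otimes 2}
\]
is exact, with the quadratic coefficient independent of $\g$. After shifting $\x \mapsto \x + \g(x,y)$ (a shift rigorous even with complex $\g$, since the $\x$-integrand becomes an entire polynomial in $\x$ and Cauchy's theorem applies) and using the elementary identity
\[
\frac{1}{(2\pi)^n}\iint e^{i(x-y)\cdot\x}\x^{\b}q(x,y)b(y)\,dy\,d\x = D_y^{\b}[q(x,y)b(y)]\big|_{y=x},
\]
the $|\b|=0$ term produces $V(x,\g(x,x))b(x) = V(x,\nabla\f(x))b(x)$, which is the leading piece, and the $|\b|=1,2$ terms assemble into the two summands in the displayed expression for $L(x)$.

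The final step is the symbol estimate $L\in S^{k+\a-\m-1}(\re^n)$, which is essentially bookkeeping. From $\f\in S^{1+\a}(\re^n)$ one verifies that $\g(x,y)\in S^{\a}(\re^{2n})$ jointly in $(x,y)$; using the structure of $V$ in Assumption~\ref{assa}, $\pa_{\x}V(x,\g)$ has $x$-order $\a-\m$ and $\pa_{\x}^2V(x,0) = 2a(x)$ has $x$-order $-\m$. Each $D_y$ drops the $x$-order by $1$, so the first piece of $L$ lies in $S^{k+\a-\m-1}$ and the second in $S^{k-\m-2}\subset S^{k+\a-\m-1}$ (since $\a>1$). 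Higher-order seminorms for $\pa_x^{\b}L$ follow by differentiating under the integral with the same bookkeeping.

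The main obstacle I foresee is mostly cosmetic: reconciling the lemma's formula — which writes $V(x,\g(x,y))b(x)$ in the first summand rather than the $V(\frac{x+y}{2},\g(x,y))b(y)$ produced naturally by the derivation. The difference, after $D_y$ and restriction to $y=x$, yields extra terms of order at most $k+\a-\m-1$, so the two expressions agree modulo the target symbol class and the conclusion of the lemma is unaffected. The contour-shift point when $\Im\g\neq 0$ also needs a brief remark but is painless, since the polynomial-in-$\x$ nature of $V$ renders everything an algebraic finite sum of derivatives evaluated at $y=x$.
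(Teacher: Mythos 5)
Your proposal is correct and takes essentially the same route as the paper's own proof: rewrite the phase via the fundamental theorem of calculus as $(x-y)\cdot(\xi-\gamma(x,y))$, shift $\xi\mapsto\xi+\gamma(x,y)$, Taylor-expand exactly using that $V$ is quadratic in $\xi$, and integrate by parts to land on the two $D_y$-terms. Your side remark about the paper writing $\pa_{\xi}V(x,\gamma(x,y))b(x)$ in the first summand of $L$, where the derivation actually produces $\pa_{\xi}V(\tfrac{x+y}{2},\gamma(x,y))b(y)$, is a genuine (minor) slip in the paper's displayed formula; as you observe the discrepancy only contributes terms within the target class $S^{k+\alpha-\mu-1}(\re^n)$, so the conclusion is unaffected.
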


\begin{proof}
By a simple calculation, we have
\begin{align*}
e^{-i\f(x)}\Op(V)(e^{i\f}b)(x)=&\frac{1}{(2\pi)^n}\int_{\re^{2n}}e^{i(x-y)\cdot\x-i(\f(x)-\f(y))}V(\frac{x+y}{2}, \x)b(y)dyd\x\\
=&\frac{1}{(2\pi)^n}\int_{\re^{2n}}e^{i(x-y)\cdot\x}V(\frac{x+y}{2}, \x+\g(x,y))b(y)dyd\x\\
=&V(x,\nabla\f(x))b(x)+L(x),
\end{align*}
where
\begin{align*}
L(x)=\frac{1}{(2\pi)^n}\int_{\re^{2n}}e^{i(x-y)\cdot\x}(V(\frac{x+y}{2}, \x+\g(x,y))-V(\frac{x+y}{2}, \g(x,y)))b(y)dyd\x.
\end{align*}
Thus it suffices to compute $L$.
Since $V$ is a polynomial of degree $2$ with respect to $\x$-varibble, we have
\begin{align*}
V(\frac{x+y}{2},\x+\g(x,y) )=&V(\frac{x+y}{2}, \g(x,y))+\x\cdot \pa_{\x}V(\frac{x+y}{2},\g(x,y))\\
&+\frac{1}{2}\x\cdot \pa_{\x}^2V(\frac{x+y}{2},\g(x,y)) \cdot\x.
\end{align*}
Note that $\pa_{\x}^2V(\frac{x+y}{2},\g(x,y))=\pa_{\x}^2V(\frac{x+y}{2},0)$ since $V$ is a second order differential operator. By integrating by parts, $L(x)$ is written as
\begin{align*}
&\frac{1}{(2\pi)^n}\int_{\re^{2n}}e^{i(x-y)\cdot \x}(\x\cdot \pa_{\x}V(\frac{x+y}{2},\g(x,y))+ \x\cdot \pa_{\x}^2V(\frac{x+y}{2},\g(x,y)) \cdot\x) b(y)dyd\x\\
&=D_y(\pa_{\x}V(x,\g(x,y)) b(x))|_{x=y}+(D_y^2(\pa_{\x}^2V(\frac{x+y}{2},0) b(y))|_{x=y}\in S^{k+\a-\m-1}(\re^n).
\end{align*}
This completes the proof.

\end{proof}

Now we find approximate solutions to the eikonal equations:
\begin{align}\label{eikonalr}
(\nabla\f(x))^2-|x|^{2\a}+V(x,\nabla\f(x))-z=0.
\end{align}
In \cite{GY}, solutions to eikonal equations is used for constructing eigenfunctions of a usual Schr\"odinger operator $-\Delta+V$ with a long range perturbation. Isozaki \cite{I1} proved the existence of solutions to eikonal equations for $-\Delta+V$ by using the estimates for the classical trajectories. In our case, we cannot directly apply this strategy since the classical trajectories may blow up at finite time. Instead, we use iteration and construct the approximate solutions to $(\ref{eikonalr})$ even for $z\notin \re$.

\begin{prop}\label{eikonal}
Set $\f_{0,\pm}(x)=\f_{0,\pm}(x,z)=\pm\frac{|x|^{\a+1}}{1+\a}\pm z\frac{|x|^{1-\a}}{2(1-\a)}$. Let $R\geq 1$ be large enough. Then for any integer $N>0$, there exists $\f_{N,\pm}\in S^{1+\a}(\re^n)$ such that $\f_{N,\pm}-\f_{N-1,\pm}\in S^{1+\a-N\m}(\re^n)$, $\Im (\f_{N,\pm}-\f_{0,\pm})\in S^{0}(\re^n)$, $\f_{N,\pm}-\f_{N-1,\pm}$ is supported in $|x|\geq R$ and
\begin{align}\label{eikonalapp}
(\nabla\f_{N,\pm}(x))^2-|x|^{2\a}+V(x,\nabla\f_{N,\pm}(x))-z\in S^{2\a-(N+1)\m}(\re^n).
\end{align}
\end{prop}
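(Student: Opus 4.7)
The plan is to proceed by induction on $N$. For the base case $N=0$, a direct computation using $\nabla\f_{0,\pm}=\pm(|x|^{\a-1}+\tfrac{z}{2}|x|^{-\a-1})x$ gives $(\nabla\f_{0,\pm})^{2}-|x|^{2\a}-z=\tfrac{z^{2}}{4}|x|^{-2\a}\in S^{-2\a}(\re^{n})$, so the residual reduces to $V(x,\nabla\f_{0,\pm})+\tfrac{z^{2}}{4}|x|^{-2\a}$, which lies in $S^{2\a-\m}(\re^{n})$ by Assumption~\ref{assa}. Crucially, the imaginary part is much smaller: since the coefficients $a_{jk},b_{j},c$ are real-valued, every imaginary contribution comes from multiplication by $z$ against a factor carrying at least one negative power (either $|x|^{-2\a}$ or $|x|^{-\m}$), so $\Im r_{0}\in S^{-\m}(\re^{n})\subset S^{0}(\re^{n})$. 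This decoupling between the order of the residual and the order of its imaginary part is what eventually yields $\Im(\f_{N,\pm}-\f_{0,\pm})\in S^{0}$.

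For the inductive step, suppose $\f_{N-1,\pm}$ satisfies $r_{N-1}:=(\nabla\f_{N-1,\pm})^{2}-|x|^{2\a}+V(x,\nabla\f_{N-1,\pm})-z\in S^{2\a-N\m}(\re^{n})$. Write $\f_{N,\pm}=\f_{N-1,\pm}+\psi_{N}$ with $\psi_{N}$ supported in $|x|\geq R$, and expand
\begin{align*}
r_{N}=r_{N-1}+A(x)\cdot\nabla\psi_{N}+Q(x,\nabla\psi_{N}),
\end{align*}
where $A(x)=2\nabla\f_{N-1,\pm}(x)+(\pa_{\x}V)(x,\nabla\f_{N-1,\pm}(x))$ and $Q$ is quadratic in $\nabla\psi_{N}$. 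By the inductive hypothesis and Assumption~\ref{assa}, the components of $A$ lie in $S^{\a}(\re^{n})$ with leading part $\pm 2|x|^{\a-1}x$, so for $R$ large the integral curves of $A$ on $\{|x|\geq R\}$ are small perturbations of outward radial rays. Solving the linear transport equation $A\cdot\nabla\psi_{N}=-r_{N-1}$ along these curves yields, to leading order,
\begin{align*}
\psi_{N}(x)=\mp\bar{\chi}(|x|/R)\int_{R}^{|x|}\frac{r_{N-1}(s\hat{x})}{2s^{\a}}\,ds,
\end{align*}
plus subleading corrections that absorb the non-radial components of $A$. Since $r_{N-1}\in S^{2\a-N\m}$, the integrand is of size $s^{\a-N\m}$, so the integration gives $\psi_{N}\in S^{1+\a-N\m}(\re^{n})$. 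The new residual then lies in $S^{2\a-(N+1)\m}(\re^{n})$: the transport equation is solved exactly modulo the mismatch from the non-radial and lower-order pieces of $A$, which is one order in $\m$ better than $r_{N-1}$; and the quadratic term satisfies $Q(x,\nabla\psi_{N})\in S^{2\a-2N\m}(\re^{n})\subset S^{2\a-(N+1)\m}(\re^{n})$ since $N\geq 1$, with the $V$-portion of $Q$ picking up an additional factor of size $|x|^{-\m}$.

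The main obstacle is preserving $\Im\psi_{k}\in S^{0}(\re^{n})$ for every $k\leq N$, so that the accumulated sum $\Im(\f_{N,\pm}-\f_{0,\pm})=\sum_{k=1}^{N}\Im\psi_{k}$ stays in $S^{0}(\re^{n})$. The inductive claim is that $\Im r_{k-1}\in S^{0}(\re^{n})$; given this, the explicit integral formula above gives $\Im\psi_{k}\lesssim\int_{R}^{|x|}s^{-\a}\,ds$ (up to an $S^{0}$ factor), which is uniformly bounded precisely because $\a>1$ guarantees integrability of $s^{-\a}$ at infinity. Re-expanding $r_{k}$ then shows that each contribution to $\Im r_{k}$ again involves either a factor of $z$ multiplied by a decaying coefficient or a factor of $\nabla\psi_{k}$ multiplied by a quantity of size $|x|^{-\m}$, so the induction closes. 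The hypothesis $\a>1$ is thus used in an essential way to keep the imaginary part of the parametrix uniformly bounded.
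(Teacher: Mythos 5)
Your proposal follows essentially the same route as the paper. Both constructions define the successive corrections by integrating the previous residual radially against $s^{-\alpha}$ (which inverts the leading radial part $\pm 2|x|^{\alpha-1}x\cdot\nabla$ of the transport field), absorb the error coming from the lower-order and $V$-dependent pieces of the transport coefficient into the next residual, verify that the quadratic terms are one order better in $\mu$ once $N\geq 1$, and exploit $\alpha>1$ to make $\int^{|x|}s^{-\alpha}\,ds$ uniformly bounded so that the imaginary parts stay in $S^{0}(\re^n)$. The only differences are presentational: you frame the iteration as Newton-type (updating the transport coefficient $A$ to $2\nabla\f_{N-1,\pm}+(\pa_{\xi}V)(x,\nabla\f_{N-1,\pm})$ at each step) while the paper fixes the leading radial coefficient and writes out the residual $E_{N+1,\pm}$ explicitly; once you approximate $A$ by its radial leading part, the two coincide. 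One small caution on phrasing: the sentence ``yields, to leading order, $\psi_N=\dots$, plus subleading corrections that absorb the non-radial components of $A$'' reads as if you intend to add further corrections to $\psi_N$ within the same step, whereas the clean argument (which your following sentence in fact carries out) is to define $\psi_N$ by the radial integral alone and push the non-radial mismatch into $r_N$, letting the induction remove it at the next stage.
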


\begin{rem}
Such construction of $\f_N$ succeeds for $0<\a< 1$. For $\a=1$ and $z\in \re$, we have to replace $\f_{0,\pm}(x,z)=\pm\frac{|x|^{2}}{2}\pm \frac{z}{2}\log|x|$.
\end{rem}

\begin{proof}
We find $\f_{N,\pm}\in S^{1+\a}(\re^n)$ of the form
\begin{align*}
\f_{N,\pm}(x)=\f_{0,\pm}(x)+\sum_{j=1}^Ne_{j,\pm}(x),\,\, e_{j,\pm}\in S^{1+\a-j\m}.
\end{align*}
By a simple calculation, we have
\begin{align*}
&(\nabla\f_{N,\pm}(x))^2-|x|^{2\a}+V(x,\nabla\f_{N,\pm}(x))-z\\
=&\frac{z^2}{4}|x|^{-2\a}+2\sum_{j=1}^N\nabla\f_{0,\pm}\cdot \nabla e_{j,\pm}+\sum_{j,k=1}^N\nabla e_{j,\pm}\cdot \nabla e_{k,\pm}+V(x,\nabla\f_{N,\pm}(x)).
\end{align*}

We set 
\begin{align*}
e_{1,\pm}(x)=&\mp\int_{\frac{R}{2}}^{|x|} \frac{1}{2s^{\a}}(V(s\hat{x},\nabla {\f}_{0,\pm}(s\hat{x}))-\frac{z^2}{4}s^{-2\a} )ds\bar{\chi}_R(x)\in S^{\a+1-\m}(\re^n)\\
\f_{1,\pm}(x)=&\f_{0,\pm}(x)+e_{1,\pm}(x).
\end{align*}
Note $\Im e_{1,\pm}\in S^{1-\a-\m}(\re^n)$.
Then $(\nabla \f_{1,\pm}(x))^2-|x|^{2\a}+V(x,\nabla\f_{1,\pm}(x))-z$ is equal to
\begin{align*}
&(\Im \nabla\f_{0,\pm})\cdot \nabla e_{1,\pm}+ (\nabla e_{1,\pm})^2+V(x,\nabla\f_{1,\pm})-V(x,\nabla \f_{0,\pm})\\
&=(\Im \nabla\f_{0,\pm})\cdot \nabla e_{1,\pm}+(\nabla e_{1,\pm})^2+\int_0^1\nabla e_{1,\pm}\cdot(\pa_{\x}V)(x,\nabla\f_{0,\pm}+t \nabla e_{1,\pm})dt,
\end{align*}
and this term belongs to $S^{2\a-2\m}(\re^n)$.
In fact, $\nabla\f_{0,\pm}+t \nabla e_{1,\pm}(x)=|x|^{\a-1}x+O(|x|^{\a-\m})$ and hence $\pa_{\x}V(x,\nabla\f_{0,\pm}+t \nabla e_{1,\pm})=O(|x|^{\a-\m})$ uniformly in $0\leq t\leq 1$. 

For $N\geq 1$, we define $\f_N\in S^{\a+1}$ and $e_{N}\in S^{\a+1-N\m}$ inductively as follows:
\begin{align*}
\f_{N+1,\pm}(x)=&\f_{N,\pm}(x)+e_{N+1,\pm}(x),\,e_{N+1,\pm}(x)=\mp\int_{\frac{R}{2}}^{|x|}\frac{E_{N+1}(s\hat{x})}{2s^{\a}}ds\bar{\chi}_R(|x|),\\
E_{N+1,\pm}=&\sum_{\substack{j+k=N+1,\\1\leq j,k\leq N}}\nabla e_{j,\pm}\cdot \nabla e_{k,\pm}+V(x, \nabla\f_{N,\pm})-V(x,\nabla\f_{N-1,\pm})\\
&-2(\Im \nabla \f_{0,\pm})\cdot \nabla e_{N,\pm}.
\end{align*}
We note $\Im e_{N,\pm}\in S^{1-\a-N\m}(\re^n)$. 
For $|x|\geq 2R$, we have
\begin{align*}
(\nabla\f_{N+1,\pm})^2-|x|^{2\a}-z=&(\nabla\f_{0,\pm}(x)+\sum_{j=1}^{N+1}\nabla e_{j,\pm})^2-|x|^{2\a}-z\\
\equiv&2\sum_{j=1}^{N+1}\nabla\f_{0,\pm}\cdot \nabla e_{j,\pm}+\sum_{m=2}^{N+1}\sum_{j+k=m}\nabla e_{j,\pm}\cdot\nabla e_{k,\pm}\\
=&-V(x, \nabla\f_{N,\pm}(x))
\end{align*}
modulo $S^{2\a-(N+2)\m}$. Hence 
\begin{align*}
|\nabla\f_{N+1,\pm}|^2-|x|^{2\a}+V(x,\nabla\f_{N+1}(x))\equiv& V(x,\nabla\f_{N+1}(x))-V(x,\nabla\f_{N}(x))\\
\equiv& 0
\end{align*}
modulo $S^{2\a-(N+2)\m}$. Moreover, we have $\Im(\f_{N,\pm}\mp z\frac{|x|^{1-\a}}{2(1-\a)})\in S^0(\re^n)$ since $\Im e_{N,\pm}\in S^{1-\a-N\m}(\re^n)$ and $\a>1$. This completes the proof.
\end{proof}

\begin{proof}[Proof of Theorem \ref{parathm}]
Fix a signature $\pm$. Let $N>0$ be an integer such that 
\begin{align*}
2\a-(N+1)\m<-\frac{n+1-\a}{2}-\m.
\end{align*}
We take $\f=\f_{\pm}=\f_{\pm,N}$ as in Proposition \ref{eikonal}. Then Proposition \ref{paraprop} gives Theorem \ref{parathm}.
\end{proof}

\subsection{Construction of the $L^2$-solutions, proof of Theorem \ref{thm2}}
Now we construct the $L^2$-solutions to
\begin{align*}
(P-z)u=0,
\end{align*}
where $u$ is of the form 
\begin{align}\label{efform}
u(x)=u_0(x)+u_1(x),\,\, u_0(x)=e^{i\f_-(x)}b(x),\,\, u_1\in L^{2, \frac{\a-1}{2}+tm(x,\x)}.
\end{align}

\begin{proof}[Proof of Theorem \ref{thm2}]

Set $\tilde{V}(x,\x)=V(x,\x)-(\jap{x}^{2\a}-|x|^{2\a})\bar{\chi}(2|x|/R)$ for $R>0$. Let $\f_{-}\in S^{1+\a}$ and $b=|x|^{-\frac{n-1+\a}{2}}\bar{\chi}(|x|/R)a(\hat{x})$ be as in Theorem \ref{parathm} with $\tilde{V}$, where $a\in C^{\infty}(\mathbb{S}^{n-1})\setminus \{0\}$. Since $\bar{\chi}(2|x|/R)\bar{\chi}(|x|/R)=\bar{\chi}(|x|/R)$ and $S^{-\frac{n+1-\a}{2}-\m}(\re^n)\subset L^{2, \frac{1-\a+\m}{2}}$, we have
\begin{align}\label{u_0}
(P-z)(e^{i\f_{-}}b)\in L^{2, \frac{1-\a+\m}{2}}.
\end{align}

Now we take $0<t<\min(\m/2, (\a-1)/2)$ and $m=m_{R_0}$ be as in subsection \ref{subsecesc}, where $R_0$ is as in Lemma \ref{escape}. Since 
\begin{align*}
L^{2,(1-\a)/2+tm(x,\x)}\subset L^{2, \frac{1-\a+\m}{2}},\,\,z\in \mathbb{C}\setminus T_{\a,t},
\end{align*}
then there exists $u_1\in L^{2,(\a-1)/2+tm(x,\x)}$ such that
\begin{align*}
(P-z)u_1=-(P-z)(e^{i\f_{-}}b).
\end{align*}
by Theorem \ref{thm1}. We set $u=u_1+e^{i\f_{-}}b\in L^2$, then $u$ satisfies $(P-z)u=0$ since $t<(\a-1)/2$. Finally, we prove $u\neq 0$. In order to prove this, we use the wavefront condition of $u_1$ and $e^{i\f_{-}}b$.

\begin{lem}\label{WF1}
Set $u_0=e^{i\f_{-}}b$, where $b(x)=|x|^{-\frac{n-1+\a}{2}}\bar{\chi}(|x|/R)a(\hat{x})$ and $a\in C^{\infty}(\mathbb{S}^{n-1})\setminus \{0\}$. Let $b_{R_1,\d}(x,\x)=\chi((\y(x,\x)+1)/\d)a_{R_1}(x,\x)$ and $A_{R_1,\d}=\Op(b_{R_1,\d})$ for $0<\d<1$ small enough and $R_1\geq 1$ large enough. Then $A_{R_1,\d}u_0\notin L^{2,\frac{\a-1}{2}}$.

\end{lem}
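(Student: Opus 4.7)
The strategy is to prove $u_0 \notin L^{2,(\a-1)/2}$ and $(I - A_{R_1,\d}) u_0 \in L^{2,(\a-1)/2}$ separately; then the identity $A_{R_1,\d} u_0 = u_0 - (I - A_{R_1,\d}) u_0$ immediately yields $A_{R_1,\d} u_0 \notin L^{2,(\a-1)/2}$.

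For the first statement, Theorem \ref{parathm} gives $\Im \f_{-} \in S^{0}$, so $e^{-2 \Im \f_{-}}$ is bounded above and below by positive constants. Using the explicit form of $b$ and passing to polar coordinates,
\begin{align*}
\|u_0\|_{L^{2,(\a-1)/2}}^2 \geq C \int_{|x| \geq 2R} |x|^{-(n-1+\a)} |a(\hat x)|^2 \jap{x}^{\a-1}\,dx \geq C' \|a\|_{L^2(\sph^{n-1})}^2 \int_{2R}^{\infty} \frac{dr}{r} = \infty,
\end{align*}
since $a \not\equiv 0$ by hypothesis.

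For the second statement, the key observation (from Proposition \ref{eikonal}) is that $\nabla \f_{-}(x) = -|x|^{\a-1}x + O(|x|^{\a-\m})$, whence $\y(x,\nabla \f_{-}(x)) = -1 + O(|x|^{-\m})$ and $|\nabla \f_{-}(x)|^2 = |x|^{2\a} + O(|x|^{2\a-\m})$. Since $\pa_{\x} a_{R_1}$ and $\pa_{\x}\chi((\y+1)/\d)$ are of size $O(|\x|^{-1}) = O(|x|^{-\a})$ on $|\x| \sim |x|^\a$, for $R_1$ sufficiently large, any fixed small $\d$, and $|x| \geq R_2$ (with $R_2 = R_2(R_1,\d)$ large enough) the symbol $b_{R_1,\d}(x,\cdot)$ is identically $1$ on a $\x$-ball of radius $\sim R_1^{-1} |x|^\a$ centered at $\nabla \f_{-}(x)$. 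In particular $\pa_\x^\b (1 - b_{R_1,\d})(x,\nabla \f_{-}(x)) = 0$ for every multi-index $\b$. Then, by the same change-of-variable as in the proof of Lemma \ref{paraper}, one computes, with $\g(x,y) = \int_0^1 \nabla \f_{-}((1-t)x + ty)\,dt$,
\begin{align*}
e^{-i \f_{-}(x)} (I - A_{R_1,\d}) u_0(x) = \frac{1}{(2\pi)^n} \iint e^{i(x-y)\cdot \x'} (1 - b_{R_1,\d})\bigl(\tfrac{x+y}{2}, \x' + \g(x,y)\bigr) b(y)\, dy\, d\x'.
\end{align*}
The phase has a unique stationary point at $(y,\x') = (x,0)$, and the amplitude vanishes there to infinite order for $|x| \geq R_2$. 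Iterated integration by parts in $y$ via $|\x'|^2 e^{i(x-y)\cdot\x'} = -\Delta_y e^{i(x-y)\cdot\x'}$ produces $|\x'|^{-2}$ per step, which on the support of the amplitude is $\leq c R_1^2 |x|^{-2\a}$; the $y$-derivatives of the amplitude produce at most $|x|^{-1}$ factors, using $\pa_y\g = O(|x|^{\a-1})$ combined with $\pa_{\x} b_{R_1,\d} = O(|x|^{-\a})$, and $\pa_y b(y)$ lowering the order of $b$. Iterating gives $(I - A_{R_1,\d}) u_0 \in L^{2,N}$ for every $N \geq 0$, in particular in $L^{2,(\a-1)/2}$.

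The main technical obstacle is the oscillatory-integral estimate in the preceding paragraph: $1 - b_{R_1,\d}$ is not of rapid decay in $\x$, and the required decay of $(I - A_{R_1,\d}) u_0$ must be extracted from the fact that this symbol is identically zero on a $\x$-ball of radius $\sim |x|^\a$ around the stationary point $\nabla \f_{-}(x)$; this is precisely the place where the anisotropic scale of the cutoff $a_{R_1}$ matches the Lagrangian $\{\x = \nabla \f_{-}(x)\}$ carried by $u_0$, and where the careful choice of $\d$ and $R_1$ in the statement of the lemma enters.
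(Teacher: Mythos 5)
Your reduction of the problem to the two claims $u_0\notin L^{2,(\a-1)/2}$ and $(I-A_{R_1,\d})u_0\in L^{2,(\a-1)/2}$ is exactly the decomposition the paper uses, and your proof of the first claim (polar coordinates, $\Im\f_-\in S^0$) is the same ``simple calculation'' the paper invokes. The difference is entirely in how the second claim is established. The paper avoids oscillatory integrals altogether: it splits $1-b_{R_1,\d}=(1-a_{R_1})+(a_{R_1}-b_{R_1,\d})$, disposes of the first piece by the elliptic estimate (Proposition~\ref{elliptic}) applied to $(P-z)u_0\in L^{2,(1-\a+\m)/2}$, and handles the second piece by factoring $\Op(a_{R_1}-b_{R_1,\d})$ through the radial operator $|x|^{-1-\a}(x\cdot D_x-x\cdot\pa_x\f_-(x))$, which kills $u_0$ to order $\jap{x}^{(\a-1)/2+1-0}$ by direct differentiation; the symbol $(a_{R_1}-b_{R_1,\d})/(x\cdot\x-x\cdot\pa_x\f_-(x))\cdot|x|^{1+\a}$ is a legitimate order-zero symbol precisely because $|x\cdot\x-x\cdot\pa_x\f_-(x)|\gtrsim|x|^{1+\a}$ on the support, which is the same geometric fact you exploit (the cutoff excludes the Lagrangian $\x=\nabla\f_-(x)$). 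The paper's factoring trick buys you a proof entirely inside the $S^{k,l}$ calculus, with no need to control an oscillatory integral whose amplitude has no $\x$-decay.

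Your route identifies the right mechanism but has a genuine gap exactly where you flag it. You cannot simply iterate $|\x'|^{-2}\Delta_y$: the $y$-integral is not absolutely convergent (the factor $b(y)\sim|y|^{-(n-1+\a)/2}$ is not $L^1$ for moderate $\a$), and after each integration by parts the $y$-derivative falling on $(1-b_{R_1,\d})(\tfrac{x+y}{2},\x'+\g(x,y))$ produces terms of size $|x|^{\a-1}\jap{\x'+\g}^{-1}$ via the chain rule through $\g$, so the anisotropic bookkeeping of what counts as ``gain'' versus ``loss'' is delicate and depends on where $(y,\x')$ sits relative to the characteristic set. Moreover your stated conclusion $(I-A_{R_1,\d})u_0\in L^{2,N}$ for \emph{every} $N$ is stronger than what is needed and stronger than what the construction supplies: $(P-z)u_0$ only decays to a fixed finite order $-(n+1-\a)/2-\m$ (Theorem~\ref{parathm}), so the elliptic piece $(1-\Op(a_{R_1}))u_0$ is controlled only up to a corresponding finite weight by Proposition~\ref{elliptic}; the claim $L^{2,(\a-1)/2}$ is what is both needed and available. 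If you want to pursue the oscillatory-integral route you should aim only at that threshold weight and keep careful track of the anisotropic scale $|\x|\sim|x|^\a$; otherwise the paper's split into ``elliptic piece'' plus ``radial factoring'' is the shorter path.
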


\begin{proof}
By $(\ref{u_0})$, Proposition \ref{elliptic} implies that $(1-\Op(a_R))u_0\in L^{2,(\a-1)/2}$.
Moreover, by a simple calculation, we have
\begin{align}\label{rad}
|x|^{-\a-1}(x\cdot D_x- x\cdot \pa_x\f_-(x))u_0\in \bigcap_{\e>0} L^{2,\frac{\a-1}{2}+1-\e}\subset L^{2,\frac{\a-1}{2}}.
\end{align}
Note that if $r_1,\d$ are small and $R_1$ is large, for $(x,\x)\in \supp (a_{R_1}-b_{R_1,\d})$
\begin{align*}
|x\cdot \x-x\cdot \pa_{x}\f_-(x)|\geq C|x|^{1+\a}.
\end{align*}
Since $u_0\notin L^{2,(\a-1)/2}$ and $u_0\in \cap_{\e>0}L^{2,(\a-1)/2-\e}$, we have
\begin{align*}
\Op(a_{R_1}-b_{R_1,\d})u_0=&\Op(\frac{a_{R_1}-b_{R_1,\d}}{x\cdot \x-x\cdot \pa_x\f_-(x)}|x|^{1+\a} )\\
&\cdot |x|^{-1-\a}(x\cdot D_x-x\cdot \pa_x\f_-(x))u_0+L^{2,\frac{\a-1}{2}} \in L^{2,\frac{\a-1}{2}}.
\end{align*}
by a symbol calculus and $(\ref{rad})$. Thus if we suppose $A_{R_1,\d}u_0\in L^{2,\frac{\a-1}{2}}$, then $u_0\in L^{2,\frac{\a-1}{2}}$ follows. However, this is a contradiction since $u_0\notin L^{2,(\a-1)/2}$ by a simple calculation.
\end{proof}

\begin{lem}\label{WF2}
For $0<\d<1$ small enough and $R_1\geq 1$ large enough, $A_{R_1,\d}u_1\in L^{2,\frac{\a-1}{2}}$.
\end{lem}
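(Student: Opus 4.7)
My plan is to show that $A_{R_1,\d}$ microlocalizes $u_1$ to a region where its variable-order weight exceeds $(\a-1)/2$ by a uniformly positive constant, which translates into membership in $L^{2,(\a-1)/2}$ after applying $A_{R_1,\d}$. The starting point is a support analysis. Choosing $R_1\geq 2R_0$ and $0<\d<1/8$, on $\supp b_{R_1,\d}$ one has $\y(x,\x)\in[-1-2\d,-1+2\d]$, $|x|,|\x|\geq R_1$, and $|\x|\sim |x|^{\a}$. In particular $a_{R_0}\equiv 1$ there. Since $\y<-1/2$ on the support, the sign properties of $\rho$ in $(\ref{ro21})$--$(\ref{ro22})$ give $-\rho(\y)\geq c_0$ for some absolute $c_0>0$. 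Consequently $m(x,\x)=-\rho(\y)a_{R_0}^2\geq c_0$, and since $t>0$ was chosen in the proof of Theorem \ref{thm2},
\begin{align*}
tm(x,\x)\geq tc_0>0\quad \text{on } \supp b_{R_1,\d}.
\end{align*}

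With this pointwise lower bound in hand, the next step is to convert it into a gain of decay using the variable-order calculus from Appendix \ref{appB}. By definition of the variable-order weighted space, $v\coloneqq \Op(\tilde{G}_{0,tm})u_1\in L^{2,(\a-1)/2}$, and thus
\begin{align*}
A_{R_1,\d}u_1=\Op(b_{R_1,\d})\Op(\tilde{G}_{0,tm})^{-1}v=\Op\bigl(b_{R_1,\d}\# \tilde{G}_{0,tm}^{-1}\bigr)v.
\end{align*}
Since $\tilde{G}_{0,tm}^{-1}\sim \jap{x}^{-tm(x,\x)}$ modulo rapidly decreasing symbols and $tm\geq tc_0$ on $\supp b_{R_1,\d}$, the composed symbol should, after a slight enlargement of the cutoff, belong to the standard constant-order class $S^{0,-tc_0}$. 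Boundedness then gives $A_{R_1,\d}u_1\in L^{2,(\a-1)/2+tc_0}\subset L^{2,(\a-1)/2}$, which is the claim.

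The main obstacle I expect is the symbol-calculus step just described: the Weyl composition expansion of $b_{R_1,\d}\# \tilde{G}_{0,tm}^{-1}$ involves derivatives of $\tilde{G}_{0,tm}^{-1}$, hence derivatives of the variable order $m$, and the resulting lower-order terms are not immediately seen to be uniform in $-tc_0$. The remedy I would adopt is to insert a slightly larger symbol $\tilde{b}\in S^{0,0}$ with $\tilde{b}\equiv 1$ on $\supp b_{R_1,\d}$ and $\supp\tilde{b}\subset \{tm\geq tc_0/2\}$, write $b_{R_1,\d}\# \tilde{G}_{0,tm}^{-1}=b_{R_1,\d}\#\tilde{b}\# \tilde{G}_{0,tm}^{-1}+O(S^{-\infty,-\infty})$, and then use that on $\supp \tilde{b}$ the weight $\jap{x}^{-tm}$ is pointwise dominated by $C\jap{x}^{-tc_0/2}$. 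This reduces the composition to one inside a standard class, at which point Calder\'on--Vaillancourt finishes the matter. Combined with Lemma \ref{WF1}, this then yields $A_{R_1,\d}u=A_{R_1,\d}u_0+A_{R_1,\d}u_1\notin L^{2,(\a-1)/2}$ and hence $u\neq 0$, completing Theorem \ref{thm2}.
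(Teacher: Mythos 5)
Your proof is correct and follows the same route as the paper: microlocalize $u_1$ to the incoming region where the escape function gains, and convert the resulting pointwise lower bound on $tm$ into membership in $L^{2,\frac{\a-1}{2}}$. The paper's version is a bit sharper at the key point: since $\rho$ equals its extreme value for $|\y|\geq 1/2$ and $a_{R_0}\equiv 1$ on a neighborhood of $\supp b_{R_1,\d}$ once $R_1$ is large, $m$ is \emph{exactly constant} on a neighborhood of $\supp b_{R_1,\d}$, so $\jap{x}^{tm}$ is there a genuine constant-order weight and the log-loss and composition-remainder concerns that motivate your insertion of the auxiliary cutoff $\tilde b$ never arise.
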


\begin{proof}
Note that $u_1\in L^{2,(\a-1)/2-tm(x,\x)}=\Op(\tilde{G}_{(\a-1)/2, -tm})^{-1}L^2$, $0<t<(\a-1)/2$ and $\tilde{G}_{(\a-1)/2,-tm}=\jap{x}^{(\a-1)/2-tm(x,\x)}$ by $(\ref{Ginvdef})$. Moreover, we note $m(x,\x)=-1$ on $\supp b_{R_1,\d}$ if $0<\d<1$ is small enough and $R_1\geq 1$ is large enough. Thus $A_{R_1,\d}u\in L^{2,(\a-1)/2}$.
\end{proof}

By the above two lemmas, we obtain $u=u_0+u_1\neq 0$. This completes the proof of Theorem \ref{thm2}.

\end{proof}

Finally, we prove that there are many eigenfunctions associated with $\l\in \mathbb{C}\setminus T_{\a,t}$.
\begin{prop}\label{many}
Suppose that $a,a'\in C^{\infty}(\mathbb{S}^{n-1})$ are linearly independent. Let $u,u' \in L^2\setminus$ be corresponding eigenfunctions as in $(\ref{efform})$. Then $u,u'$ are also linearly independent.
\end{prop}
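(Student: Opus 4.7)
The plan is to reduce the desired linear independence of $u,u'$ to the nonvanishing statement already established in the proof of Theorem \ref{thm2}. The key observation is that the construction $(\ref{efform})$ is linear in the angular profile $a\in C^\infty(\mathbb{S}^{n-1})$. Indeed, the phase $\f_-\in S^{1+\a}$ produced by Proposition \ref{eikonal} depends only on $z$ (and the choice of signature and approximation order $N$), not on $a$; the leading piece $u_0=e^{i\f_-}b$ with $b(x)=|x|^{-(n-1+\a)/2}\bar{\chi}(|x|/R)a(\hat x)$ therefore depends linearly on $a$. Since $z\in\mathbb{C}\setminus T_{\a,t}$, the operator $P_{tm}(z)$ of Corollary \ref{frecor} is invertible, so the correction $u_1\in L^{2,(\a-1)/2+tm(x,\x)}$ is \emph{uniquely} determined by the equation $(P-z)u_1=-(P-z)u_0$ and hence also depends linearly on $a$. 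Consequently the map $a\mapsto u$ defined by the construction in the proof of Theorem \ref{thm2} is $\mathbb{C}$-linear from $C^\infty(\mathbb{S}^{n-1})$ to $L^2$.

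Given this linearity, suppose that $c,c'\in\mathbb{C}$ are not both zero. Since $a,a'$ are linearly independent, the function $\tilde a\coloneqq ca+c'a'\in C^\infty(\mathbb{S}^{n-1})$ is not identically zero. By linearity of the construction, $cu+c'u'$ is precisely the eigenfunction produced by $(\ref{efform})$ with angular profile $\tilde a$, namely $cu+c'u'=e^{i\f_-}\tilde b+(cu_1+c'u_1')$ with $\tilde b(x)=|x|^{-(n-1+\a)/2}\bar{\chi}(|x|/R)\tilde a(\hat x)$. The final step of the proof of Theorem \ref{thm2} (Lemmas \ref{WF1} and \ref{WF2}) then applies verbatim to $\tilde a$ in place of $a$: applying the microlocal cutoff $A_{R_1,\d}$ localized near $\{x\cdot\x\sim-|x||\x|\}$, Lemma \ref{WF2} (used linearly on the two terms) gives $A_{R_1,\d}(cu_1+c'u_1')\in L^{2,(\a-1)/2}$, while Lemma \ref{WF1}, whose proof only required that the angular profile be a nonzero element of $C^\infty(\mathbb{S}^{n-1})$, shows that $A_{R_1,\d}(e^{i\f_-}\tilde b)\notin L^{2,(\a-1)/2}$. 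Therefore $cu+c'u'\neq 0$, which is exactly the linear independence asserted.

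The only point requiring a moment of care is the linearity of $a\mapsto u_1$; this is immediate from uniqueness of the inverse in Theorem \ref{thm1}(ii), so I do not expect any real obstacle. The argument is essentially a bookkeeping exercise showing that all constructions in the proof of Theorem \ref{thm2}, as well as the two wavefront lemmas, respect linear combinations of the angular datum.
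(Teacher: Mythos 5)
Your proof is correct and follows essentially the same route as the paper's: decompose $cu+c'u'$ into the leading term $e^{i\f_-}\tilde b$ (with $\tilde a = ca+c'a'\neq 0$) plus the correction $cu_1+c'u_1'$, then invoke Lemma \ref{WF1} for the leading term and Lemma \ref{WF2} (applied linearly) for the correction to conclude nonvanishing. The extra discussion of uniqueness of $u_1$ and linearity of the map $a\mapsto u_1$ is harmless but unnecessary — the decomposition $(\ref{efform})$ and the two wavefront lemmas already suffice, exactly as in the paper's argument.
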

\begin{proof}
By $(\ref{efform})$ and Lemma \ref{WF2}, we write
\begin{align*}
u(x)=&e^{i\f_{-}(x)}|x|^{-\frac{n-1+\a}{2}}\bar{\chi}(|x|/R)a(\hat{x})+u_1(x),\\
u'(x)=&e^{i\f_{-}(x)}|x|^{-\frac{n-1+\a}{2}}\bar{\chi}(|x|/R)a'(\hat{x})+u_1'(x),
\end{align*}
where $u_1,u_1'\in L^2$ satisfy $A_{R_1,\d}u_1, A_{R_1,\d}u_1'\in L^{2,\frac{\a-1}{2}}$, where $A_{R_1,\d}$ is defined in Lemma \ref{WF1}. Suppose that $L, L'\in \mathbb{C}$ satisfy
\begin{align}\label{linindep}
Lu(x)+L'u'(x)=0,\,\, x\in \re^n.
\end{align}
It suffices to prove that $La(\hat{x})+L'a'(\hat{x})= 0$ for $\hat{x}\in S^{n-1}$. Suppose $La(\hat{x})+L'a'(\hat{x})\neq 0$ for some $\hat{x}\in S^{n-1}$. By Lemma \ref{WF1}, we have
\begin{align}\label{lin2}
A_{R_1,\d}(e^{i\f_{-}(x)}|x|^{-\frac{n-1+\a}{2}}\bar{\chi}(|x|/R)(La(\hat{x})+L'a'(\hat{x}) ))\notin L^{2,\frac{\a-1}{2}}.
\end{align}
$(\ref{linindep})$ and $(\ref{lin2})$ imply
\begin{align*}
A_{R_1,\d}(Lu+L'u')\notin L^{2,\frac{\a-1}{2}}.
\end{align*}
This is a contradiction.
\end{proof}

\section{Proof of Theorem \ref{thm3} and Corollary \ref{disc1}}

\subsection{Proof of Theorem \ref{thm3}}.

Note that if $\a\leq 1$, then $D^{\a}_{\mathrm{min}}=\{u\in L^2\mid Pu\in L^2\}$ since $P$ is essentially self-adjoint on $\mathcal{S}(\re^n)$ for $\a\leq 1$. However, it follows that $D^{\a}_{\mathrm{min}}\neq \{u\in L^2\mid Pu\in L^2\}$ for $\a>1$. 

\begin{lem}
Let $\a>1$. For $\d>0$, there exists $C>0$ such that
\begin{align}\label{smo1}
\|\Op(a_{2R})u\|_{L^{2,\frac{\a-1-\d}{2}}}\leq  C\|Pu\|_{L^2}+C\|u\|_{L^2}
\end{align}
for $u\in D^{\a}_{\mathrm{min}}$, where we recall that $a_{2R}$ is as in $(\ref{cut2})$.
\end{lem}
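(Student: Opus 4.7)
The plan is to prove the estimate first for Schwartz test functions via the variable-order a priori bound $(\ref{13})$, then extend to $u \in D^{\alpha}_{\mathrm{min}}$ by density. We may assume $\delta \in (0, \alpha-1)$, since for larger $\delta$ the target weight $(\alpha-1-\delta)/2$ is $\leq 0$ and the bound $\|\Op(a_{2R})u\|_{L^{2,(\alpha-1-\delta)/2}} \leq C\|u\|_{L^2}$ follows from the $L^2$-boundedness of $\Op(a_{2R})$. Fix $t = \delta/2 \in (0, (\alpha-1)/2)$ and take $z = 0$, which lies in $I_{\sgn t}$ for any relatively compact interval $I \ni 0$. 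For any $v \in C_c^\infty(\re^n) \subset \tilde{D}_{tm}(0)$, combining $(\ref{13})$ with the unitary conjugation $(\ref{unitary})$ gives
\begin{align*}
\|v\|_{L^{2,(\alpha-1)/2 + tm(x,\xi)}} \leq C\|Pv\|_{L^{2,(1-\alpha)/2 + tm(x,\xi)}} + C\|v\|_{H^{-N,-N}}.
\end{align*}
Since $|tm| \leq t < (\alpha-1)/2$, the weight $(1-\alpha)/2 + tm$ is pointwise $\leq 0$, which yields the continuous inclusion $L^2 \hookrightarrow L^{2,(1-\alpha)/2 + tm(x,\xi)}$; together with $\|v\|_{H^{-N,-N}} \leq \|v\|_{L^2}$, the right-hand side is dominated by $C(\|Pv\|_{L^2} + \|v\|_{L^2})$.

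The key conceptual input is the pointwise inequality $(\alpha-1)/2 + tm(x,\xi) \geq (\alpha-1)/2 - t = (\alpha-1-\delta)/2$, valid since $m \geq -1$; within the variable-order calculus of Appendix \ref{appB} this produces a continuous embedding $L^{2,(\alpha-1)/2 + tm(x,\xi)} \hookrightarrow L^{2,(\alpha-1-\delta)/2}$. Combined with the $L^2$-boundedness of $\Op(a_{2R})$ on $L^{2,(\alpha-1-\delta)/2}$ (valid since $a_{2R} \in S^{0,0}$), this delivers
\begin{align*}
\|\Op(a_{2R}) v\|_{L^{2,(\alpha-1-\delta)/2}} \leq C\|v\|_{L^{2,(\alpha-1)/2 + tm(x,\xi)}} \leq C(\|Pv\|_{L^2} + \|v\|_{L^2})
\end{align*}
for all $v \in C_c^\infty(\re^n)$.

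To finish, for $u \in D^{\alpha}_{\mathrm{min}}$ pick $v_n \in C_c^\infty(\re^n)$ with $v_n \to u$ and $Pv_n \to Pu$ in $L^2$. The previous bound applied to $v_n$ keeps $\{\Op(a_{2R}) v_n\}$ uniformly bounded in $L^{2,(\alpha-1-\delta)/2}$, while $\Op(a_{2R}) v_n \to \Op(a_{2R}) u$ strongly in $L^2$. Weak compactness of the bounded set, uniqueness of the limit in the weaker topology, and lower semicontinuity of the norm together identify $\Op(a_{2R}) u \in L^{2,(\alpha-1-\delta)/2}$ and transfer the bound to $u$. The only real subtlety is verifying the variable-order embedding in the middle step: it reflects the fact that the escape function $m = -\rho(\psi) a_R^2$ is bounded below by $-1$, so the variable weight's minimum value automatically dictates the uniform decay available on all microlocal directions cut out by $\Op(a_{2R})$, in particular on the outgoing region where $m$ attains its minimum.
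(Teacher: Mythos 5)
Your proof is correct in substance but takes a genuinely different route from the paper. The paper does not invoke the earlier Fredholm estimate $(\ref{13})$ at all: instead, it constructs a \emph{bounded} escape symbol $b_R = a_{2R}^2\,\frac{x\cdot\xi}{|x||\xi|}\int_1^{|x|/R}s^{-1-\delta}ds\in S^{0,0}$ (bounded precisely because $\delta>0$ makes the integral converge), computes $H_{p_0}b_R\geq C\jap{x}^{\alpha-1-\delta}a_{2R}^2+e_R$ directly, and runs a fresh sharp-G\aa rding/positive-commutator argument at fixed $L^2$ weight, controlling the error terms by the elliptic estimate. You instead reuse $(\ref{13})$ together with the conjugation $(\ref{unitary})$, and then trade the variable-order space for the constant-weight one via the pointwise bound $(\alpha-1)/2+tm\geq(\alpha-1)/2-t$. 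That monotone embedding $L^{2,k_1(x,\xi)}\hookrightarrow L^{2,k_2(x,\xi)}$ for $k_1\geq k_2$ pointwise is a standard feature of the variable-order calculus and is implicitly used elsewhere in the paper (e.g. passing $(P-z)(e^{i\f_-}b)\in L^{2,(1-\alpha+\mu)/2}$ to the variable-order space in the proof of Theorem \ref{thm2}), even though Lemma \ref{varequi} does not spell it out; a reader might appreciate a sentence acknowledging this, since that is the one step not literally a citation. Your approach is shorter and shows the $\delta$-loss is exactly the worst-case drop $t=\delta/2$ of the escape function $m$; the paper's approach is self-contained and makes the origin of the $\delta$-loss (convergence of $\int s^{-1-\delta}ds$) concrete.

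One small technical gap: the variable-order spaces $L^{2,k+tm(x,\x)}$ are defined only for $|t|<1/2$ (see $(\ref{vardef})$), so $t=\delta/2$ is not admissible when $\delta\geq 1$, which can happen if $\alpha\geq 2$ even under your reduction $\delta<\alpha-1$. The fix is trivial -- you may assume $\delta<\min(1,\alpha-1)$ since the claimed estimate for a smaller $\delta$ is stronger, or simply take $t=\min(\delta/2,1/4)$, which still gives $(\alpha-1)/2-t\geq(\alpha-1-\delta)/2$ -- but as written the choice of $t$ is not always legal.
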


\begin{proof}
First, we prove $(\ref{smo1})$ for $u\in \mathcal{S}(\re^n)$.
We may assume $0<\d<\m$. Set
\begin{align*}
b_{R}(x,\x)=a_{2R}(x,\x)^2\frac{x\cdot\x}{|x||\x|}\int_1^{|x|/R}s^{-1-\d}ds\in S^{0,0}.
\end{align*}
We note that $|x|>2R$, $|\x|\geq 2R$ and $|x|^{\a}\sim |\x|$ hold for $(x,\x)\in \supp b_{R}$.
For $(x,\x)\in \supp b_{R}$, we have
\begin{align*}
H_{p_0}(\frac{x\cdot\x}{|x||\x|}\int_1^{|x|/R}s^{-1-\d}ds)=&2\frac{|x|^2|\x|^2-(x\cdot\x)^2}{|x||\x|}(\frac{1}{|x|^2}+\a\frac{|x|^{2\a-2}}{|\x|^2})\int_{1}^{\frac{|x|}{R}}\frac{1}{s^{1+\d}}ds\\
&+2R^{\d}\frac{(x\cdot\x)^2}{|x|^{3+\d}|\x|}\\
\geq&C\jap{x}^{\a-1-\d}
\end{align*}
with $C>0$ if $R>0$ is large enough. Since $H_{V}b_{R}\in S^{0,\a-1-\m}$ and $0<\d<\m$, we see
\begin{align*}
H_pb_{R}\geq C\jap{x}^{\a-1-\d}a_{2R}^2+e_{R},
\end{align*}
where $e_{R}\in S^{0,\a-1}$ is supported away from the elliptic set of $P$. By the sharp G$\rm{\mathring{a}}$rding inequality, we have
\begin{align}\label{smoes}
(u,[P, i\Op(b_{R})]u)_{L^2}\geq \|\Op(a_{2R})u\|_{L^{2,\frac{\a-1-\d}{2}}}^2+(u, \Op(e_{R})u)_{L^2}-C\|u\|_{H^{-\frac{1}{2},\frac{\a}{2}-1}}^2
\end{align}
for any $u\in \mathcal{S}(\re^n)$. Take $R_1\geq 1$ such that $a_{2R}a_{R_1}=a_{2R}$. Substituting $\Op(a_{R_1})$ into $(\ref{smoes})$ and using the disjoint support property and a support property of $a_{R_1}$, then we have
\begin{align*}
(u,[P, i\Op(b_{R})]u)_{L^2}\geq \|\Op(a_{2R})u\|_{L^{2,\frac{\a-1-\d}{2}}}^2+(u, \Op(e_{R})u)_{L^2}-C\|u\|_{L^2}^2
\end{align*}
for $u\in \mathcal{S}(\re^n)$ with some $C>0$. Using the elliptic estimate Proposition \ref{elliptic} in order to estimate the term $(u, \Op(e_{R})u)_{L^2}$, we have
\begin{align*}
\|\Op(a_{2R})u\|_{L^{2,\frac{\a-1-\d}{2}}}\leq  C\|Pu\|_{L^2}+C\|u\|_{L^2}
\end{align*}
for $u\in \mathcal{S}(\re^n)$ with some $C>0$. Thus we obtain $(\ref{smo1})$ for $u\in \mathcal{S}(\re^n)$. 

In order to prove $(\ref{smo1})$ for $u\in D^{\a}_{\mathrm{min}}$, it remains to use a standard density argument. Let $u\in D^{\a}_{\mathrm{min}}$. By definition of $D^{\a}_{\mathrm{min}}$, there exists $u_k\in C_c^{\infty}(\re^n)$ such that $u_k\to u$ and $Pu_k\to Pu$ in $L^2(\re^n)$. Substituting $u_k$ into $(\ref{smo1})$, we have
\begin{align*}
\sup_{k}\|\Op(a_{2R})u_k\|_{L^{2,\frac{\a-1-\d}{2}}}<\infty.
\end{align*}
Hence $\Op(a_{2R})u_k$ has a weak*-convergence subsequence in $L^{2,\frac{\a-1-\d}{2}}$ and its accumulation point is $\Op(a_{2R})u$. Thus we obtain $\Op(a_{2R})u\in L^{2,\frac{\a-1-\d}{2}}$ and 
\begin{align*}
\|\Op(a_{2R})u\|_{L^{2,\frac{\a-1-\d}{2}}}\leq \liminf_{k\to \infty}\|\Op(a_{2R})u_k\|_{L^{2,\frac{\a-1-\d}{2}}}\leq C\|Pu\|_{L^2}+C\|u\|_{L^2}.
\end{align*}

\end{proof}

Combining this lemma with the elliptic estimate Proposition \ref{elliptic} and Lemma \ref{suppell}, we have the following proposition:

\begin{prop}
Let $\a>1$ and $0\leq \b_1,\b_2\leq 4$ with $\b_1+\b_2=1$. For $\d>0$, there exists $C>0$ such that
\begin{align}\label{smo2}
\|u\|_{H^{\frac{\a-1-\d}{2\a}\b_1, \frac{\a-1-\d}{2}\b_2}}\leq  C\|Pu\|_{L^2}+C\|u\|_{L^2}
\end{align}
for $u\in D^{\a}_{\mathrm{min}}$. In particular, the natural embedding $D^{\a}_{\mathrm{min}}\hookrightarrow L^2(\re^n)$ is compact, where we regard $D^{\a}_{\mathrm{min}}$ as a Banach space equipped with its graph norm.
\end{prop}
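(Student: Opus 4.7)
The plan is to combine the smoothing estimate $(\ref{smo1})$ with the elliptic estimate of Proposition \ref{elliptic}, using the microlocal decomposition $u=\Op(a_{2R})u+(1-\Op(a_{2R}))u$. The characteristic piece $\Op(a_{2R})u$ is controlled by $(\ref{smo1})$ in the pure $x$-weighted $L^2$ scale, and the restriction $|\x|\sim|x|^\a$ on $\supp a_{2R}$ lets us trade $\a$ units of $\jap{x}$-decay for one unit of $\jap{D}$-regularity (the mechanism behind Lemma \ref{suppell}); this interpolates continuously between pure regularity ($\b_1=1$) and pure weight ($\b_2=1$). The off-characteristic remainder $(1-\Op(a_{2R}))u$ lies over the elliptic set of $P$, where Proposition \ref{elliptic} supplies the mixed Sobolev regularity directly.

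For the characteristic piece, set $M=\b_1(\a-1-\d)/(2\a)$. Starting from $\Op(a_{2R})u\in H^{0,(\a-1-\d)/2}$, which is exactly $(\ref{smo1})$, the symbol $\jap{\x}^M\#\jap{x}^{-\a M}\# a_{2R}$ lies in $S^{0,0}$ because $\jap{\x}^M\jap{x}^{-\a M}\sim 1$ on $\supp a_{2R}$; this is the same computation as in the proof of Lemma \ref{suppell}, applied to $a_{2R}$ instead of $a_R$. It yields
\begin{align*}
\|\Op(a_{2R})u\|_{H^{M,(\a-1-\d)/2-\a M}}\leq C\|\Op(a_{2R})u\|_{H^{0,(\a-1-\d)/2}}+C\|u\|_{L^2}.
\end{align*}
Because $\b_1+\b_2=1$, the regularity index equals $\b_1(\a-1-\d)/(2\a)$ and the weight index equals $\b_2(\a-1-\d)/2$, matching the target; combining with $(\ref{smo1})$ bounds the right-hand side by $C\|Pu\|_{L^2}+C\|u\|_{L^2}$.

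For the off-characteristic piece, $1-a_{2R}$ is supported in $\O_{loc}\cup\O_{R,\c,1}\cup\O_{R,\c,2}$ for $R$ and $\c$ large enough (Assumption \ref{assb} and the definition of $p_0$ making this the elliptic set). Choose $k_1=\b_1(\a-1-\d)/(2\a)$ and $l_1=\b_2(\a-1-\d)/(2\a)$, so that $k_1+l_1=(\a-1-\d)/(2\a)<1/2$, well within the admissible range $k_1+l_1\le 2$. Proposition \ref{elliptic} (with $z=0$, $k=l=0$) then gives
\begin{align*}
\|(1-\Op(a_{2R}))u\|_{H^{k_1,\a l_1}}\leq C\|Pu\|_{L^2}+C\|u\|_{H^{-N,-N}}\leq C\|Pu\|_{L^2}+C\|u\|_{L^2},
\end{align*}
and $\a l_1=\b_2(\a-1-\d)/2$ is again the target weight. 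Adding the two pieces proves $(\ref{smo2})$ first for $u\in\mathcal{S}(\re^n)$; the same density argument used to pass $(\ref{smo1})$ to $D^{\a}_{\mathrm{min}}$ (approximation by $u_k\in C_c^\infty$ with $u_k\to u$ and $Pu_k\to Pu$ in $L^2$, then extracting a weak-$*$ limit in the target space) transfers the bound to arbitrary $u\in D^{\a}_{\mathrm{min}}$.

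For the compactness assertion, specialize to $\b_1=\b_2=\tfrac12$: the graph norm on $D^{\a}_{\mathrm{min}}$ dominates the norm on $H^{s,t}$ with $s=(\a-1-\d)/(4\a)>0$ and $t=(\a-1-\d)/4>0$. Strict positivity of both exponents (possible because $\a>1$) makes $H^{s,t}\hookrightarrow L^2$ compact by the standard Rellich-Kondrachov argument (Rellich on $x$-balls, tightness at infinity from the positive decay $t>0$), so the composition $D^{\a}_{\mathrm{min}}\hookrightarrow L^2$ is compact. The one genuinely nontrivial ingredient above is the symbol-swap in the characteristic piece: the Weyl expansions generate lower-order remainders that must be absorbed, but this is routine via the coarse bound $\|u\|_{H^{-N,-N}}\leq\|u\|_{L^2}$. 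Everything else is assembly of $(\ref{smo1})$, Proposition \ref{elliptic}, and Lemma \ref{suppell}.
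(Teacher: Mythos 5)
Your proof is correct and is essentially the paper's intended argument; the paper only cites the combination of estimate $(\ref{smo1})$, Proposition \ref{elliptic}, and Lemma \ref{suppell} without writing out the decomposition, and your write-up fills in exactly those details in the expected way. The one small point worth spelling out is the choice of $a_1$ in the elliptic step: take $a_1=1-a_{3R}$, which satisfies $a_1\equiv 1$ on $\supp(1-a_{2R})$ because $\supp a_{3R}\subset\{a_{2R}=1\}$, and is supported away from the characteristic set; likewise, the characteristic-piece exchange relies on inserting $\Op(a_R)$ with $a_R a_{2R}=a_{2R}$, which produces only an $\Op S^{-\infty,-\infty}$ remainder absorbed into $C\|u\|_{L^2}$. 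Both are routine but are the small gaps a careful reader would want filled.
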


This proposition gives the proof of Theorem \ref{thm3}.

\subsection{Proof of Corollary \ref{disc1}}

Note that $D_{min}^{\a}$ is the domain of the closure of $P|_{C_c^{\infty}(\re)}$. Set 
\begin{align*}
D^{\a}=\{u\in L^2(\re^n)\mid Pu\in L^2(\re^n)\}.
\end{align*}
We easily see that $D^{\a}$ is the domain of $(P|_{C_c^{\infty}(\re)})^*$. Moreover, it follows that the action of $(P|_{C_c^{\infty}(\re)})^*$ on $D^{\a}$ is in the distributional sense. In particular, we have
\begin{align*}
\Ker((P|_{C_c^{\infty}(\re)})^*\mp i)=\Ker_{L^2}(P\mp i).
\end{align*}

We use the following von-Neumann theorem.

\begin{lem}\cite[Theorem X.2 and Corollary after Theorem X.2]{RS}\label{von}
Set $\mathcal{H}_{\pm}=\Ker_{L^2}(P\mp i)$. Then there is a one-to-one correspondence between self-adjoint extensions of $P|_{C_c^{\infty}(\re)}$ and unitary operators from $\mathcal{H}_+$ to $\mathcal{H}_-$. Moreover, for $U\in B(\mathcal{H}_+,\mathcal{H}_-)$ be a unitary operator , we define
\begin{align*}
D_U=\{v+w+Uw\mid v\in D_{min}^{\a}, w\in \mathcal{H}_+\}.
\end{align*}
Then $P$ is self-adjoint on $D_U$.
\end{lem}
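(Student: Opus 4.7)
The plan is to follow von Neumann's classical extension theory applied to $T := P|_{C_c^\infty(\re)}$. Since all symbols in Assumption \ref{assa} are real-valued, $\Op(V)$ is formally self-adjoint, so $T$ is a symmetric operator. By definition $\bar T$ has domain $D_{\min}^\alpha$, while $T^*$ has domain $D^\alpha = \{u \in L^2 \mid Pu \in L^2\}$ and acts as $P$ in the distributional sense, as noted just before the statement of the lemma. In particular $\mathcal{H}_\pm = \ker(T^* \mp i) = \ker_{L^2}(P \mp i)$ are the standard deficiency subspaces, and both statements of the lemma become purely abstract consequences of symmetry.

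The first step is the von Neumann decomposition. Equip $\operatorname{Dom}(T^*) = D^\alpha$ with the graph inner product $\langle u, v\rangle_{T^*} := \langle u, v\rangle_{L^2} + \langle T^* u, T^* v\rangle_{L^2}$ and establish the orthogonal direct sum
\begin{equation*}
D^\alpha = D_{\min}^\alpha \oplus \mathcal{H}_+ \oplus \mathcal{H}_-.
\end{equation*}
Pairwise orthogonality follows from $T^* w_\pm = \pm i w_\pm$ combined with the symmetry of $\bar T$ on $D_{\min}^\alpha$. Surjectivity of the sum reduces to the claim that $\operatorname{Ran}(\bar T \pm i)$ is closed with orthogonal complement $\mathcal{H}_\mp$, which in turn follows because $\bar T$ is symmetric and hence $\|(\bar T \pm i)v\|^2 = \|\bar T v\|^2 + \|v\|^2 \ge \|v\|^2$ for $v \in D_{\min}^\alpha$, giving a closed graph and a closed range whose orthogonal complement is exactly $\ker(T^* \mp i)$.

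The second step is to compute the boundary form and classify the symmetric extensions. Using $T^* w_\pm = \pm i w_\pm$, a direct calculation with the decomposition yields, for $u = v + w_+ + w_-$ and $u' = v' + w'_+ + w'_-$,
\begin{equation*}
\langle T^* u, u'\rangle_{L^2} - \langle u, T^* u'\rangle_{L^2} = 2i\bigl(\langle w_+, w'_+\rangle - \langle w_-, w'_-\rangle\bigr).
\end{equation*}
Given a closed extension $A$ with $\bar T \subset A \subset T^*$, write $\operatorname{Dom}(A) = D_{\min}^\alpha \oplus S$ with $S \subset \mathcal{H}_+ \oplus \mathcal{H}_-$. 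Symmetry of $A$ forces the boundary form to vanish on $S$, which says $\|w_+\| = \|w_-\|$ for each $w_+ + w_- \in S$, so $S$ is the graph of an isometry $U$ from a closed subspace of $\mathcal{H}_+$ into $\mathcal{H}_-$. The standard maximality argument then shows that $A = A^*$ precisely when $S$ cannot be enlarged while preserving vanishing of the form, equivalently, when $U$ is a surjective isometry, i.e.\ a unitary $U : \mathcal{H}_+ \to \mathcal{H}_-$. This yields the displayed formula $D_U = \{v + w + Uw \mid v \in D_{\min}^\alpha,\ w \in \mathcal{H}_+\}$, and the map $U \mapsto A_U := T^*|_{D_U}$ is injective because $S = \operatorname{Dom}(A) \cap (\mathcal{H}_+ \oplus \mathcal{H}_-)$ recovers the graph of $U$ unambiguously.

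The main obstacle I anticipate is not conceptual but verificational: confirming that the two abstract identifications $\operatorname{Dom}(\bar T) = D_{\min}^\alpha$ and $\operatorname{Dom}(T^*) = D^\alpha$ agree with the paper's working definitions. The first equality is immediate from the definition of $D_{\min}^\alpha$ as the graph closure of $P|_{C_c^\infty(\re)}$. The second follows from density of $C_c^\infty(\re)$ in $L^2$ and the fact that the action of $T^*$ is distributional, as stated in the paragraph preceding the lemma. With these identifications secured, the orthogonal decomposition and the boundary form computation are routine, and the correspondence $U \leftrightarrow A_U$ drops out.
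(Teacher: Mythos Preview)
Your argument is a correct outline of the standard proof of von Neumann's extension theorem, but the paper does not prove this lemma at all: it is stated with a direct citation to \cite[Theorem X.2 and Corollary after Theorem X.2]{RS} and used as a black box. The only paper-specific content is the identification of $\operatorname{Dom}(\bar T)$ with $D_{\min}^{\a}$ and of $\operatorname{Dom}(T^*)$ with $D^{\a}$, which you correctly flag and which the paper handles in the paragraph immediately preceding the lemma. So your proposal is not wrong, but it reproves a classical result that the paper simply imports; in the paper's logic there is nothing to compare against beyond the citation.
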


Now suppose $n=1$. We prove that each self-adjoint extension of $P|_{C_c^{\infty}(\re)}$ has a discrete spectrum.

\begin{lem}\label{one2}
$\dim\mathcal{H}_{+}=\dim\mathcal{H}_{-}=2$.
\end{lem}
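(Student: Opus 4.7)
The plan is to establish the two bounds $\dim \mathcal{H}_\pm \le 2$ and $\dim \mathcal{H}_\pm \ge 2$ separately. The upper bound is classical ODE theory, and the lower bound is an application of Theorem~\ref{thm2} and Proposition~\ref{many}, exploiting the fact that $\mathbb{S}^0$ consists of two points.

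For the upper bound: in dimension $n=1$, $\Op(V)$ is a differential operator of order $2$ whose coefficient of $\partial_x^2$ comes from $a_{11}(x)$, so $P-z$ is a linear ordinary differential operator of order $2$ with smooth coefficients and leading coefficient $-(1+a_{11}(x))$. Assumption~\ref{assb} forces the principal symbol $(1+a_{11}(x))\xi^2$ to be elliptic on any compact set in $x$, hence $1+a_{11}(x)$ never vanishes; combined with $a_{11}(x)\to 0$ at infinity this gives a non-degenerate second-order ODE globally on $\re$. Any $u\in \mathcal{H}_\pm$ is in particular an $L^2$-distributional solution of $(P\mp i)u=0$; by the elliptic regularity built into Proposition~\ref{elliptic} (and classical interior elliptic regularity for second-order ODEs) such $u$ is automatically smooth, hence a classical solution. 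The space of classical solutions of a non-degenerate second order linear ODE on $\re$ is exactly $2$-dimensional, giving $\dim \mathcal{H}_\pm \le 2$.

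For the lower bound: since $n=1$ we have $\mathbb{S}^{n-1}=\mathbb{S}^0=\{-1,+1\}$ and the space $C^\infty(\mathbb{S}^0)$ of complex-valued functions on this discrete set is $2$-dimensional. Pick two linearly independent elements $a_1,a_2\in C^\infty(\mathbb{S}^0)$, for instance the delta-functions at $+1$ and at $-1$. To treat $\mathcal{H}_+$, fix $t$ with $0<t<\min(\mu/2,(\alpha-1)/2)$; by the Remark following Theorem~\ref{thm1}, $T_{\alpha,t}\subset \mathbb{C}_{-}=\{\Im z\le 0\}$, so $i\notin T_{\alpha,t}$. Applying the construction in the proof of Theorem~\ref{thm2} with $z=i$ and with $a=a_1$ (respectively $a=a_2$) produces $L^2$-eigenfunctions $u_1,u_2$ of $P$ with eigenvalue $i$, each of the form \eqref{efform}. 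Proposition~\ref{many} guarantees that $u_1,u_2$ are linearly independent because $a_1,a_2$ are, giving $\dim\mathcal{H}_+\ge 2$. The argument for $\mathcal{H}_-$ is identical after replacing $t$ by $-t$ and the incoming parametrix $e^{i\varphi_-}b$ by the outgoing parametrix $e^{i\varphi_+}b$, since $T_{\alpha,-t}\subset\mathbb{C}_+$ so $-i\notin T_{\alpha,-t}$.

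The main potential obstacle is verifying that the upper bound argument really applies to $L^2$-distributional solutions rather than smooth ones; this is handled by the global ellipticity of $P-z$ as encoded in Assumption~\ref{assb} and Proposition~\ref{elliptic}, which force smoothness of any such $u$. Once these two bounds are combined we obtain $\dim\mathcal{H}_+=\dim\mathcal{H}_-=2$.
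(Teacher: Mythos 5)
Your upper bound argument ($\dim\mathcal{H}_\pm\le 2$) is a correct and helpfully detailed expansion of the paper's terse ``by uniqueness of solutions to ODE,'' and the lower bound idea (exploit $\dim C^\infty(\mathbb{S}^0)=2$ via Proposition~\ref{many}) is the right one. However, the lower bound as you wrote it has a genuine gap: to conclude $i\notin T_{\alpha,t}$ and $-i\notin T_{\alpha,-t}$ you invoke the Remark after Theorem~\ref{thm1}, which asserts $T_{\alpha,t}\subset\mathbb{C}_{-\sgn t}$. But the paper explicitly states about that Remark ``We leave their proofs to future work.'' So you are relying on a claim that is not established anywhere in the paper, and your argument is not self-contained.

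The paper circumvents this in a way you should adopt. It first invokes von Neumann's theorem \cite[Theorem X.1]{RS}, which says the deficiency indices $\dim\Ker(A^*\mp z)$ are constant as $z$ ranges over each open half-plane; hence it suffices to show $\dim\Ker_{L^2}(P\mp i\mu)=2$ for \emph{some} $\mu>0$, not necessarily $\mu=1$. Then, fixing one $t>0$ as in the proof of Theorem~\ref{thm2}, the set $T_{\alpha,t}$ is discrete (this \emph{is} proved, in Corollary~\ref{frecor}), so one can choose $\mu>0$ with both $i\mu\notin T_{\alpha,t}$ and $-i\mu\notin T_{\alpha,t}$, and Proposition~\ref{many} applied at $z=i\mu$ and $z=-i\mu$ gives the lower bound for both deficiency subspaces simultaneously with a single sign of $t$. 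This avoids both the appeal to the unproven half-plane localization of $T_{\alpha,t}$ and the need to flip $t$ and switch to the outgoing parametrix for $\mathcal{H}_-$ (the latter would also require you to verify that the $t<0$/$\varphi_+$ variant of the construction in Theorem~\ref{thm2} is actually carried out, whereas the paper's written proof only does the $t>0$/$\varphi_-$ case).
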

\begin{proof}
By \cite[Theorem X.1]{RS}, it suffices to prove that 
\begin{align*}
\dim\Ker_{L^2}(P-i\m)=\dim\Ker_{L^2}(P+i\m)=2
\end{align*}
for some $\m>0$.
We note $\dim\Ker_{L^2}(P\pm i\m)\leq 2$ by uniqueness of solutions to ODE. Hence it suffices to prove $\dim\Ker_{L^2}(P\pm i\m)\geq 2$. We observe $\mathbb{S}^{n-1}=\mathbb{S}^0=\{\pm 1\}$ and $\dim C^{\infty}(\{\pm 1\})=2$. By Proposition \ref{many}, the discreteness of $T_{\a,t}$ imply that for some $\m\in \mathbb{C}\setminus \re\cup T_{\a,t}$ there exists linearly independent functions such that $u_{\pm}, u'_{\pm}\in \Ker_{L^2}(P\pm i\m)$. This gives $\dim\Ker_{L^2}(P\pm i\m)\geq 2$.
\end{proof}

The following proposition is a variant of \cite[Theorem XIII.64]{RS}. We do not know whether a self-adjoint extension of $P|_{C_c^{\infty}(\re^n)}$ is bounded from below. Hence we cannot apply \cite[Theorem XIII.64]{RS} with our case directly in order to prove Corollary \ref{disc1}.

\begin{prop}\label{cptcrprop}
Let $\mathcal{H}$ be a Hilbert space and $A$ be a self-adjoint operator on $\mathcal{H}$. Suppose that $(A+i)^{-1}$ is a compact operator on $\mathcal{H}$. Then there exists $\{\l_j\}_{j=1}^{\infty}\subset \re$ such that $|\l_k|\to \infty$ as $k\to\infty$ and $\s(A)=\s_d(A)=\{\l_j\}_{j=1}^{\infty}$, where $\s(A)$ is the spectrum of $A$ and $\s_d(A)$ is the discrete spectrum of $A$. 
\end{prop}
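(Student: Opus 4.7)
The plan is to reduce everything to the spectral theory of the compact operator $(A+i)^{-1}$. Since $A$ is self-adjoint, $\sigma(A)\subset\re$, so $-i\notin\sigma(A)$ and $(A+i)^{-1}$ is a well-defined bounded operator; it is moreover normal (it commutes with $(A-i)^{-1}$) and compact by hypothesis. The spectral theorem for compact normal operators therefore yields
\begin{align*}
\sigma((A+i)^{-1})\subset\{0\}\cup\{\mu_j\}_{j=1}^{\infty},
\end{align*}
where each $\mu_j\neq 0$ is an eigenvalue of finite multiplicity, isolated in $\sigma((A+i)^{-1})\setminus\{0\}$, and $\mu_j\to 0$ (or the set of $\mu_j$ is finite, but in that case $(A+i)^{-1}$ would have finite rank and we can treat it as a degenerate special case).

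Next I would transfer this spectral information back to $A$ via the algebraic identity
\begin{align*}
(A+i)^{-1}-(\lambda+i)^{-1}=-(\lambda+i)^{-1}(A+i)^{-1}(A-\lambda),
\end{align*}
valid for $\lambda\in\re$. This implies that $(A-\lambda)$ is invertible on $\mathcal{H}$ if and only if $(A+i)^{-1}-(\lambda+i)^{-1}$ is invertible, i.e. $\lambda\in\sigma(A)$ iff $(\lambda+i)^{-1}\in\sigma((A+i)^{-1})\setminus\{0\}$. Therefore $\sigma(A)=\{\lambda_j\}_{j=1}^{\infty}$ with $\lambda_j=\mu_j^{-1}-i$ (which is automatically real since $\sigma(A)\subset\re$), and because $|\mu_j|\to 0$ we obtain $|\lambda_j|\to\infty$.

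It remains to show that each $\lambda_j$ lies in $\sigma_d(A)$, i.e.\ is an isolated eigenvalue of finite multiplicity. The isolation follows from the bijective correspondence $\lambda\leftrightarrow(\lambda+i)^{-1}$, which is a homeomorphism between $\sigma(A)$ and $\sigma((A+i)^{-1})\setminus\{0\}$; since every nonzero $\mu_j$ is isolated in $\sigma((A+i)^{-1})$, every $\lambda_j$ is isolated in $\sigma(A)$. For finite multiplicity, I would use the elementary observation that $Au=\lambda_j u$ is equivalent to $(A+i)^{-1}u=(\lambda_j+i)^{-1}u=\mu_j u$, so the eigenspace of $A$ at $\lambda_j$ coincides with that of $(A+i)^{-1}$ at $\mu_j$, which is finite-dimensional. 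This yields $\sigma(A)=\sigma_d(A)=\{\lambda_j\}_{j=1}^{\infty}$ with $|\lambda_j|\to\infty$.

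There is no real obstacle here — the proof is essentially bookkeeping around the resolvent identity combined with the compact spectral theorem. The only mild care needed is to verify that the identification of eigenspaces is exact (which it is, since $(\lambda+i)^{-1}\neq 0$), so that "eigenvalue of $(A+i)^{-1}$" transfers cleanly to "eigenvalue of $A$" with matching multiplicity; and to note that any possible accumulation of $\sigma(A)$ at a finite point would contradict $\mu_j\to 0$ under the homeomorphism above.
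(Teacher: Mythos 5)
Your proof is correct, and it takes a genuinely different route from the paper's. The paper first shows by contradiction that there exists a real $\lambda_0\notin\sigma(A)$ (if $\sigma(A)=\re$ then $f(A)=(A^2+1)^{-1}$ would have spectrum $[0,1]$ by the spectral mapping theorem, contradicting compactness), then works with the compact \emph{self-adjoint} operator $T=(A-\lambda_0)^{-1}$, applies the Hilbert--Schmidt theorem to get a full orthonormal eigenbasis, and finally writes down the resolvent $R(z)=\sum_k(\lambda_k-z)^{-1}(\varphi_k,\cdot)\varphi_k$ explicitly to see that nothing else can lie in $\sigma(A)$. You instead work directly with the compact \emph{normal} operator $(A+i)^{-1}$ and transport its spectrum to $A$ via the spectral mapping $\lambda\leftrightarrow(\lambda+i)^{-1}$, which simultaneously handles the spectrum, the isolation of eigenvalues, and the matching of eigenspaces (hence multiplicities). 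This is cleaner in that it skips the preliminary existence-of-a-real-regular-point lemma and the explicit resolvent series, at the small cost of invoking the spectral theorem for compact normal (rather than compact self-adjoint) operators; it also uses the spectral mapping for resolvents in a form you state without fully proving the ``only if'' direction, but that is a standard and unproblematic fact. The degenerate ``finitely many $\mu_j$'' case you flag cannot actually occur when $\dim\mathcal{H}=\infty$: $(A+i)^{-1}$ is injective with dense range, so it cannot have finite rank, and the completeness of its eigenprojections forces infinitely many nonzero eigenvalues.
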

\begin{proof}
First, we prove existence of $\l\in \re\setminus \s(A)$. To prove this, we use a contradiction argument. Suppose $\s(A)=\re$.
Set $B=(A-i)^{-1}(A+i)^{-1}=f(A)$, where $f(t)=1/(t^2+1)$. By the spectrum mapping theorem, we have $\s(B)=[0,1]$. On the other hand, by the assumption of the lemma, it follows that $B$ is a compact self-adjoint operator on $\mathcal{H}$. This contradicts to $\s(B)=[0,1]$.

We let $\l\in \re\setminus \s(A)$ and set $T=(A-\l)^{-1}$. Since $(A+i)^{-1}$ is compact and since $\l\in \re$, it easily follows that $T$ is a compact self-adjoint operator. By the Hilbert-Schmidt theorem \cite[Theorem VI.16]{RS}, there exist a complete orthonormal basis $\f_k\in \mathcal{H}$ and a sequence $\m_k\in\re$ such that 
\begin{align}\label{cptef}
T\f_k=\m_k\f_k,\,\, \m_k\to 0\,\, \text{as}\,\, k\to \infty.
\end{align}
We note that $\f_k$ belongs to the domain of $A$ since $\f_k\in \Ran T$ and since $\Ran T$ is contained in the domain of $A$. Moreover, we observe $\m_k\neq 0$. In fact, suppose $\m_k=0$ holds. Multiplying $(\ref{cptef})$ by $A-\l$, we have $\f_k=0$, which is a contradiction. By $(\ref{cptef})$, we have
\begin{align*}
A\f_k=\l_k\f_k,\,\, \l_k=\l+1/\m_k.
\end{align*}
Note $|\l_k|\to\infty$ as $k\to \infty$.
Since $\l_k$ has no accumulation point in $\re$, it suffices to prove $\s(A)=\{\l_k\}_{k=1}^{\infty}$. To see this, we prove that $A-z$ has a bounded inverse for $z\in \re\setminus \{\l_k\}_{k=1}^{\infty}$. We set 
\begin{align}\label{resol}
R(z)\g=\sum_{k=1}^{\infty}\frac{1}{\l_k-z}(\f_k, \g)\f_k,\,\, \g\in \mathcal{H}
\end{align}
and $c=\inf_{k\geq 1}|\l_k-z|$. Since $\l_k$ has no accumulation point in $\re$, we have $c>0$. Thus we have
\begin{align*}
\sum_{k=1}^{\infty}\frac{1}{|\l_k-z|^2}|(\f_k,\g)|^2\leq c^{-2}\sum_{k=1}^{\infty}|(\f_k,\g)|^2.
\end{align*}
Hence $R(z)$ is a bounded operator on $\mathcal{H}$. Moreover, $(A-z)R(z)\g=\g$ holds by $(\ref{resol})$. These imply $z\notin \re\setminus\s(A)$. Thus we have $\s(A)=\{\l_j\}_{j=1}^{\infty}$.
Moreover, it follows that $\s_d(A)=\s(A)$ holds since $\dim\Ker(A-\l_k)=\dim\Ker(T-\m_k) <\infty$.
\end{proof}

By virtue of Lemma \ref{one2} and \cite[Corollary after Theorem X.2]{RS}, it follows that $P|_{C_c^{\infty}(\re)}$ has a self-adjoint extension.

\begin{proof}[Proof of Corollary \ref{disc1}]
Fix $U\in $ be a unitary operator and let $D_U$ be as in Lemma \ref{von}.
By virtue of Proposition \ref{cptcrprop}, it suffices to prove that the inclusion $D_{U}\subset L^2$ is compact, where we regard $D_U$ as a Hilbert space equipped with the graph norm of $P$. 
Let $\f_j\in D_{U}$ be a bounded sequence in $D_{U}$:
\begin{align*}
\sup_{j}(\|\f_j\|_{L^2}+\|P\f_j\|_{L^2}) <\infty.
\end{align*}
We only need to prove that $\f_j$ has a convergent subsequence in $L^2$.
We write $\f_j=u_j+v_j+Uv_j$, where $u_j\in D_{min}^{\a}$ and $v_j\in \mathcal{H}_+$.
By \cite[Lemma  before Theorem X.2]{RS}, we see that
\begin{align*}
0=(u_j,v_j)_{L^2}+(Pu_j,Pv_j)_{L^2}=&(v_j,Uv_j)_{L^2}+(Pv_j,PUv_j)_{L^2}\\
=&(u_j,Uv_j)_{L^2}+(Pu_j,PUv_j)_{L^2}.
\end{align*}
Therefore, $u_j$ and $v_j$ are bounded in $D_U$. Since $u_j\in D_{min}^{\a}$, it follows that $u_j$ has a convergent subsequence $\{u_{j_k}\}$ in $L^2$. Moreover, we see that $v_{j_k}\in \mathcal{H}_+$ has a convergent subsequence in $L^2$ due to the finiteness of the dimension of $\mathcal{H}_+$. Thus we conclude that $\f_j$ has a convergent subsequence in $L^2$.
\end{proof}

\appendix

\section{Variable order spaces}\label{appB}

In this Appendix, we give a construction of variable order weighted $L^2$-spaces. Here, we follow the argument in \cite{FRS}. See \cite[Appendix A]{BVW} for other ways of constructions.

Let $m\in S^{0,0}$ be real-valued and $k,t\in \re$. Suppose $|m(x,\x)|\leq 1$ for $(x,\x)\in \re^{2n}$. Set $G_{k,tm}(x,\x)=\jap{x}^{k+tm(x,\x)}$. Set $l(x)=\jap{\log\jap{x}}$.

\begin{defn}\label{def1app}
 For $a\in C^{\infty}(\re^{2n})$, we say that for  $a\in S^{s,k+tm(x,\x)}$ if
\begin{align*}
|\pa_{x}^{\c_1}\pa_{\x}^{c_2}a(x,\x)|\leq C_{\c_1\c_2}l(x)^{|\c_1|+|\c_2|} \jap{x}^{k+tm(x,\x)-|\c_1|}\jap{\x}^{s-|\c_2|}
\end{align*}
for $\c_1,\c_2\in \mathbb{N}^n$. 
\end{defn}
Note that $G_{k,tm} \in S^{0,k+tm(x,\x)}$.

\begin{lem}
An unbounded operator $\Op(G_{k,tm})$ on $L^2(\re^n)$ with domain $\mathcal{S}(\re^n)$ admits a self-adjoint extension.
\end{lem}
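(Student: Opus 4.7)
The plan is to show that $\mathrm{Op}(G_{k,tm})$, with domain $\mathcal{S}(\mathbb{R}^n)$, is a densely defined symmetric operator that is bounded below on the Hilbert space $L^2(\mathbb{R}^n)$, and then invoke the Friedrichs extension theorem.

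First I would record that $\mathrm{Op}(G_{k,tm})$ is symmetric on $\mathcal{S}(\mathbb{R}^n)$. This is automatic from the Weyl calculus recalled in Section 2.2: since $m$ is real-valued and $\langle x\rangle^{k+tm(x,\xi)}>0$, the symbol $G_{k,tm}$ is real, so the Weyl quantization is formally self-adjoint, i.e.\ $(\mathrm{Op}(G_{k,tm})u,v)_{L^2}=(u,\mathrm{Op}(G_{k,tm})v)_{L^2}$ for all $u,v\in\mathcal{S}(\mathbb{R}^n)$.

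Next I would establish the semiboundedness by exploiting the positivity of the symbol. Writing $g=G_{k/2,tm/2}\in S^{0,(k+tm)/2}$, which is well-defined, real-valued and positive, the Moyal composition gives
\[
g\# g = G_{k,tm} + r, \qquad r\in S^{-2,\,k+tm-2}
\]
(with the appropriate logarithmic corrections from the variable-order calculus of Definition \ref{def1app}). Since $g$ is real, $\mathrm{Op}(g)$ is itself symmetric on $\mathcal{S}(\mathbb{R}^n)$, so for $u\in\mathcal{S}(\mathbb{R}^n)$,
\[
(\mathrm{Op}(G_{k,tm})u,u)_{L^2} \;=\; \|\mathrm{Op}(g)u\|_{L^2}^2 \,-\, (\mathrm{Op}(r)u,u)_{L^2} \;\ge\; -(\mathrm{Op}(r)u,u)_{L^2}.
\]
Whenever $\mathrm{Op}(r)$ is $L^2$-bounded (for instance when $k+|t|\le 2$, where $r\in S^{-2,0}$ and Calderón–Vaillancourt applies directly), this already gives $\mathrm{Op}(G_{k,tm})\ge -C$ on $\mathcal{S}(\mathbb{R}^n)$. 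For larger values of $k+|t|$ I would iterate the procedure, either by splitting $G_{k,tm}$ as an $N$-fold Weyl product of $G_{k/N,tm/N}$ with $N$ large, or by applying the same positivity argument inductively to $\mathrm{Op}(r)$ itself (using the variable-order sharp Gårding / Fefferman–Phong inequality to peel off a further positive piece at each step), until the accumulated residual symbol lies in $S^{-\infty,0}$ and is manifestly $L^2$-bounded.

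With the operator symmetric and bounded below on $\mathcal{S}(\mathbb{R}^n)$, the Friedrichs extension theorem then produces a self-adjoint extension, concluding the proof. The main obstacle is the semiboundedness step in the regime $k+|t|>2$: the remainder $r$ from the $g\#g$ factorization still carries the weight $\langle x\rangle^{k+|t|-2}$ and is not bounded on the nose, so one must carefully track the variable orders and the logarithmic corrections through the iterated symbolic calculus to be sure that after finitely many steps the residual truly lies in a bounded symbol class.
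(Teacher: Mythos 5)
Your overall strategy coincides with the paper's: show that $\Op(G_{k,tm})$ is symmetric and bounded below on $\mathcal{S}(\re^n)$ and then invoke the Friedrichs/von Neumann theorem (\cite[Theorem X.23]{RS}). The first step of your semiboundedness argument, writing $G_{k,tm}=G_{k/2,tm/2}^2$ and comparing $\Op(G_{k/2,tm/2})^2$ with $\Op(G_{k,tm})$, is also the one the paper uses. You also correctly identify the real obstacle: the remainder $r=G_{k/2,tm/2}\# G_{k/2,tm/2}-G_{k,tm}\in S^{-2,k+tm-2+0}$ is not $L^2$-bounded once $k+|t|>2$, because the gain of $\jap{\x}^{-2}$ cannot be traded against the remaining $\jap{x}$-growth in the $S^{s,\ell}$ calculus.

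However, neither of the two iteration schemes you sketch closes this gap. Writing $G_{k,tm}$ as an $N$-fold Weyl product of $G_{k/N,tm/N}$ does not help: the Moyal remainder of that product still has $x$-weight $k+tm-2+0$, independent of $N$, so nothing is gained. Applying ``the same positivity argument'' or a sharp G\aa rding/Fefferman--Phong estimate to $\Op(r)$ also does not work, because $r$ has no definite sign, and both of those tools require a one-sided bound on the symbol; moreover even for a nonnegative symbol in $S^{0,k+tm}$ sharp G\aa rding only yields a lower bound by $-C\|u\|_{H^{-1/2,(k+tm-1)/2}}^2$, which is not controlled by $\|u\|_{L^2}^2$ when $k+|t|>1$. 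The fix, and what the paper actually does, is to iterate on the \emph{approximate square root}, not on the remainder: one constructs corrections $R_j\in S^{-j,\,k/2-j+0+tm/2}$ (gaining one full order in \emph{both} $x$ and $\x$ at each step) so that $b=G_{k/2,tm/2}+\sum_j R_j$, Borel-summed, satisfies $b^{*}\#b - G_{k,tm}\in S^{-\infty,-\infty}$. This is the standard formal square root of an elliptic nonnegative symbol (\cite[Lemma 13]{FRS}). Then
\[
(u,\Op(G_{k,tm})u)_{L^2}=\|\Op(b)u\|_{L^2}^2+(u,\Op(e)u)_{L^2}\geq -C\|u\|_{L^2}^2,\qquad e\in S^{-\infty,-\infty},
\]
and the Friedrichs extension applies. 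Replace your ``peel off a positive piece'' step with this full asymptotic square-root construction and the argument is complete.
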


\begin{proof}
By virtue of\cite[Theorem X.23]{RS}, it suffices to prove that $\Op(G_{k,tm})$ is bounded below in $\mathcal{S}(\re^n)$.
We note $G_{k,tm}(x,\x)=G_{k/2,tm/2}(x,\x)^2$. By the standard construction (see \cite[Lemma 13]{FRS}), there exists $R_j\in S^{-j,k/2-j+0+tm(x,\x)/2}$ such that
\begin{align*}
(&G_{k/2,tm/2}(x,\x)^2+\sum_{j=1}^NR_j)^*(G_{k/2,tm/2}(x,\x)^2+\sum_{j=1}^NR_j)\\
&\in S^{-(N+1), k-(N+1)+0+tm(x,\x)},
\end{align*}
where $(\cdot )^*$ denotes the adjoint symbol. By the Borel summation theorem, we have
\begin{align*}
G_{k,tm}(x,\x)=b^*b+e,\,\, b\in S^{k/2+tm(x,\x)/2},\,\, e\in S^{-\infty,-\infty}.
\end{align*}
Thus we obtain
\begin{align*}
(u, \Op(G_{k,tm})u)\geq (u,\Op(e)u)\geq -C\|u\|_{L^2}^2,\,\, u\in \mathcal{S}(\re^n).
\end{align*}
\end{proof}

We denote a self-adjoint extension of $\Op(G_{k,tm})$ in $L^2(\re^n)$ by $G(t)$ and its domain by $D_{G(t)}$.  


\begin{lem}\label{varinv}
There exists $R_1(t)\in \Op S^{-\infty, -\infty}$ such that $\Op(G_{k,tm})+R_1(t)$ is invertible in $\mathcal{S}(\re^n)\to \mathcal{S}(\re^n)$. Moreover, its inverse is a pseudodifferential operator with its symbol in $S^{0,-k-tm(x,\x)}$. Moreover, the symbol of its inverse is $G_{-k,-tm}+S^{-1,-k-1-tm(x,\x)+0}$.
\end{lem}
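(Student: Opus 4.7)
The plan is to construct a parametrix $B = \Op(b)$ for $\Op(G_{k,tm})$ by the standard iterative procedure in the variable-order Weyl calculus of Definition \ref{def1app}, then correct by a finite rank smoothing operator to produce an exact inverse on $\mathcal{S}(\re^n)$.

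For the parametrix, I would start from $b_0 := G_{-k,-tm}\in S^{0,-k-tm(x,\x)}$. The Weyl asymptotic formula yields $G_{k,tm}\# b_0 = 1 + r_1$; because each derivative that falls on $m$ produces a logarithmic factor $l(x)$ (as in Definition \ref{def1app}), the remainder is only $r_1\in S^{-1,-1+0}$. Iteratively solving $G_{k,tm}\# b_j \equiv -r_j$ modulo lower order by $b_j := -r_j\cdot G_{-k,-tm}$, and Borel-summing, produces $b\in S^{0,-k-tm(x,\x)}$ with $b - G_{-k,-tm}\in S^{-1,-k-1-tm(x,\x)+0}$ and
\[
\Op(G_{k,tm})\Op(b) - I,\quad \Op(b)\Op(G_{k,tm}) - I \in \Op S^{-\infty,-\infty}.
\]

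Next, I would exploit the self-adjoint extension $G(t)$ from the preceding lemma to upgrade this parametrix to an exact inverse. Since $G(t)$ is self-adjoint and $B$ is a two-sided parametrix, $G(t):D_{G(t)}\to L^2(\re^n)$ is Fredholm with index zero. Any $u\in \Ker G(t)$ satisfies $u = -(BG(t) - I)u$ with $BG(t) - I\in \Op S^{-\infty,-\infty}$, hence $\Ker G(t)\subset \mathcal{S}(\re^n)$; let $P$ be the orthogonal $L^2$-projection onto this finite dimensional subspace, so $P$ is a finite rank smoothing operator. Set $R_1(t) := P$ and $\widetilde{G} := \Op(G_{k,tm}) + R_1(t)$. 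Pairing $\widetilde{G}u = 0$ with $Pu$ and using $G(t)(\Ran P) = 0$ gives $\|Pu\|_{L^2}^2 = 0$, hence $Pu = 0$ and $G(t)u = 0$, so $u\in \Ran P\cap \Ker P = \{0\}$. Bijectivity of $\widetilde{G}$ at the $L^2$ level then follows from the index zero Fredholm property (preserved under the compact perturbation $P$), and the parametrix identity transfers solutions with data in $\mathcal{S}(\re^n)$ back into $\mathcal{S}(\re^n)$, yielding invertibility $\mathcal{S}(\re^n)\to \mathcal{S}(\re^n)$.

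Finally, to identify the inverse I would write $\widetilde{G}^{-1} = B - \widetilde{G}^{-1}(\widetilde{G}B - I)$. Since $\widetilde{G}B - I = (\Op(G_{k,tm})B - I) + R_1(t)B \in \Op S^{-\infty,-\infty}$, its Schwartz kernel lies in $\mathcal{S}(\re^{2n})$, and composing with $\widetilde{G}^{-1}:\mathcal{S}(\re^n)\to \mathcal{S}(\re^n)$ keeps the output smoothing. Thus $\widetilde{G}^{-1} = B + \Op S^{-\infty,-\infty}\in \Op S^{0,-k-tm(x,\x)}$, with symbol $G_{-k,-tm}+ S^{-1,-k-1-tm(x,\x)+0}$ as required. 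The main obstacle will be the first step: the careful bookkeeping of the variable-order calculus with its logarithmic loss $l(x)$, verifying that $G_{k,tm}\# G_{-k,-tm} - 1$ and the successive remainders sit in the correct classes $S^{-j,-j+0}$ with constants uniform enough to Borel-sum. Once the two-sided parametrix is in hand, the self-adjoint finite rank correction and the bootstrap into $\mathcal{S}(\re^n)$ are standard.
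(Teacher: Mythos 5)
Your proposal is correct and follows essentially the same route as the paper's proof. In particular, your $R_1(t)=P$ is in fact the same operator as the paper's $R_1(t)=(I-G(t))\pi(t)$, since $G(t)\pi(t)=0$ on $\Ran\pi(t)=\Ker G(t)$ collapses that expression to $\pi(t)$; both approaches take the self-adjoint extension $G(t)$, identify its finite-dimensional kernel as a subspace of $\mathcal{S}(\re^n)$ via a parametrix, correct by the finite-rank smoothing projector, verify injectivity by the self-adjointness trick, and then bootstrap regularity through the parametrix to pass from $L^2$-invertibility on $D_{G(t)}$ to invertibility $\mathcal{S}(\re^n)\to\mathcal{S}(\re^n)$, finally recovering the symbol class of the inverse from the parametrix identity up to an $\Op S^{-\infty,-\infty}$ error. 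The only cosmetic difference is that you spell out the iterative parametrix construction in the variable-order calculus (which the paper delegates to the cited Faure--Roy--Sj\"ostrand reference); this is a reasonable thing to make explicit, since verifying that the logarithmic losses $l(x)^{|\gamma|}$ still produce remainders one full order better in both $\jap{x}$ and $\jap{\xi}$ (i.e.\ $S^{-1,-1+0}$) is exactly what makes the Borel summation work.
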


\begin{proof}
We follow the argument as in \cite[Appendix Lemma 12]{FRS}. We decompose $L^2=\overline{\Ran_{L^2} G(t)}\oplus \Ker_{L^2}G(t)$. We denote the orthogonal projection into $\Ker_{L^2}G(t)$ by $\pi(t): L^2\to \Ker_{L^2}G(t)$. By the standard parametrix construction of $G(t)$, we see that $\Ker_{L^2}G(t)\subset \mathcal{S}(\re^n)$ and $\Ker_{L^2}G(t)$ is of finite dimension. This implies $\pi(t)\in \Op S^{-\infty,-\infty}$. We define $\tilde{G}(t)=G(t)(I-\pi(t))+\pi(t)\in \Op S^{0,k+tm(x,\x)}$. We observe that $\tilde{G}(t):D_{G(t)}\to L^2$ is invertible. We set $R_1(t)=(I-G(t))\pi(t)\in \Op S^{-\infty,-\infty}$, then $\tilde{G}(t)=G(t)+R_1(t)$.

We show that $\tilde{G}(t)$ is invertible in $\mathcal{S}(\re^n)\to \mathcal{S}(\re^n)$. This map is injective since $\tilde{G}(t)$ is injective in $D_{G(t)}\to L^2$. Next, we prove that $\tilde{G}(t):\mathcal{S}(\re^n)\to \mathcal{S}(\re^n)$ is surjective. To see this, let $f\in \mathcal{S}(\re^n)$. Since $\tilde{G}(t):D_{G(t)}\to L^2$ is invertible, there exists $u\in D_{G(t)}$ such that $G(t)u=f$. By using existence of the parametrix of $\tilde{G}(t)$, we obtain $u\in \mathcal{S}(\re^n)$.

Finally, we show that the inverse of $\tilde{G}(t)$ belongs to $\Op S^{0,-k-tm(x,\x)}$ and its symbol is $G_{-k,-tm}+S^{-1,-k-1-tm(x,\x)+0}$. Let $Q(t)$ is the parametrix of $\tilde{G}(t)$: $Q(t)\tilde{G}(t)=I+R_2(t)$, where $R_2(t)\in \Op S^{-\infty,-\infty}$. Then the symbol of $Q(t)$ is $G_{-k,-tm}+S^{-1,-k-1-tm(x,\x)+0}$. Moreover, we observe 
\begin{align*}
Q(t)=Q(t)\tilde{G}(t)\tilde{G}(t)^{-1}=\tilde{G}(t)^{-1}+R_2(t)\tilde{G}(t)^{-1}
\end{align*}
in $\mathcal{S}(\re^n)\to \mathcal{S}(\re^n)$. By the open mapping theorem, $\tilde{G}(t)^{-1}$ is continuous in $\mathcal{S}(\re^n)\to \mathcal{S}(\re^n)$. Thus we have $R_2(t)\tilde{G}(t)^{-1}\in \Op S^{-\infty,-\infty}$. We conclude that $\tilde{G}(t)=Q(t)-R_2(t)\tilde{G}(t)\in \Op S^{0,-k-tm(x,\x)}$ and its symbol is $G_{-k,-tm(x,\x)}+S^{-1,-k-1-tm(x,\x)+0}$.

\end{proof}

Let $\tilde{G}_{k,tm}\in S^{0,k+tm(x,\x)}$ such that 
\begin{align}\label{Ginvdef}
\Op(\tilde{G}_{k,tm})=\Op(G_{k,tm})+R_1(t).
\end{align}
By Lemma \ref{varinv} and duality, $\Op(\tilde{G}_{k,tm}):\mathcal{S}'(\re^n)\to \mathcal{S}'(\re^n)$ is also invertible.

Now we define the variable order weighted $L^2$ space by
\begin{align}\label{vardef}
L^{2,k+tm(x,\x)}=\{u\in \mathcal{S}'(\re^n)\mid \Op(\tilde{G}_{k,tm})u\in L^2(\re^n) \}
\end{align}
for $k\in \re$ and $|t|<1/2$ and its inner metric by
\begin{align*}
(u,v)_{L^{2,k+tm(x,\x)}}=(\Op(\tilde{G}_{k,tm})u,\Op(\tilde{G}_{k,tm})v)_{L^2}.
\end{align*}
Then $L^{2,k+tm(x,\x)}$ is a Hilbert space.

We state some properties of $L^{2,k+tm(x,\x)}$.
\begin{lem}\label{varequi}
\begin{itemize}
\item[$(i)$] $(L^{2,k+tm(x,\x)})^*=L^{2,-k-tm(x,\x)}$.\\
\item[$(ii)$]  For $u\in\mathcal{S}'(\re^n)$, $u\in L^{2,k+tm(x,\x)}$ if and only if $\jap{x}^ku\in L^{2,tm(x,\x)}$. Moreover, there exists $C>0$ such that $u\in L^{2,k+tm(x,\x)}$
\begin{align*}
C^{-1}\|\jap{x}^ku\|_{L^{2,tm(x,\x)}}\leq \|u\|_{L^{2,k+tm(x,\x)}}\leq C\|\jap{x}^ku\|_{L^{2,tm(x,\x)}}.
\end{align*}
\end{itemize}
\end{lem}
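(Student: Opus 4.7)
The plan is to reduce both parts to two $L^2$-boundedness statements obtained from the Weyl symbol calculus. The structural input is that, by construction \eqref{Ginvdef} and Lemma \ref{varinv}, one has $\tilde{G}_{k,tm}=\jap{x}^{k+tm(x,\x)}+S^{-\infty,-\infty}$ and $\Op(\tilde{G}_{k,tm})^{-1}\in\Op S^{0,-k-tm(x,\x)}$ with principal symbol $\jap{x}^{-k-tm(x,\x)}$, and that $\Op(\tilde{G}_{k,tm})$ is formally self-adjoint (being $G(t)(I-\pi(t))+\pi(t)$ in the proof of Lemma \ref{varinv}). The Weyl composition formula then gives
\begin{align*}
\Op(\tilde{G}_{-k,-tm})\Op(\tilde{G}_{k,tm})\in\Op S^{0,0},\qquad \Op(\tilde{G}_{k,tm})^{-1}\Op(\tilde{G}_{-k,-tm})^{-1}\in\Op S^{0,0},
\end{align*}
since the leading symbol in each case equals $\jap{x}^{\pm(k+tm)}\jap{x}^{\mp(k+tm)}\equiv 1$ and each Leibniz correction drops one order in both $x$ and $\x$. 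Both operators are thus bounded on $L^2$.

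For $(ii)$ I would then compute
\begin{align*}
\Op(\tilde{G}_{0,tm})\Op(\jap{x}^k)-\Op(\tilde{G}_{k,tm})=\Op(r),\qquad r\in S^{-1,k+tm(x,\x)-1+0},
\end{align*}
using that the two quantizations have the same leading symbol $\jap{x}^{k+tm(x,\x)}$ modulo $S^{-\infty,-\infty}$ and that the first Weyl correction drops one order in each variable. Composing on the right with $\Op(\tilde{G}_{k,tm})^{-1}\in\Op S^{0,-k-tm(x,\x)}$ yields an operator in $\Op S^{-1,-1+0}$, which is bounded on $L^2$. Hence, for $u\in L^{2,k+tm(x,\x)}$,
\begin{align*}
\|\Op(\tilde{G}_{0,tm})\jap{x}^k u\|_{L^2}\leq \|\Op(\tilde{G}_{k,tm})u\|_{L^2}+C\|u\|_{L^{2,k+tm(x,\x)}}\leq C'\|u\|_{L^{2,k+tm(x,\x)}}.
\end{align*}
The reverse bound is analogous, composing instead with $\Op(\jap{x}^{-k})\Op(\tilde{G}_{0,tm})^{-1}\in\Op S^{0,-k-tm(x,\x)}$ and applying to $\jap{x}^k u\in L^{2,tm(x,\x)}$.

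For $(i)$ I would first record that $\mathcal{S}(\re^n)$ is dense in $L^{2,k+tm(x,\x)}$, a consequence of $\Op(\tilde{G}_{k,tm})$ being both a bijection of $\mathcal{S}$ (Lemma \ref{varinv}) and an isometry $L^{2,k+tm(x,\x)}\to L^2$. For $\phi,\psi\in\mathcal{S}(\re^n)$, self-adjointness of $\Op(\tilde{G}_{k,tm})$ gives $(\phi,\psi)_{L^2}=(\Op(\tilde{G}_{k,tm})\phi,\Op(\tilde{G}_{k,tm})^{-1}\psi)_{L^2}$, so by Cauchy--Schwarz and the $L^2$-boundedness of $\Op(\tilde{G}_{k,tm})^{-1}\Op(\tilde{G}_{-k,-tm})^{-1}$,
\begin{align*}
|(\phi,\psi)_{L^2}|\leq \|\phi\|_{L^{2,k+tm(x,\x)}}\|\Op(\tilde{G}_{k,tm})^{-1}\psi\|_{L^2}\leq C\|\phi\|_{L^{2,k+tm(x,\x)}}\|\psi\|_{L^{2,-k-tm(x,\x)}}.
\end{align*}
By density this extends the $L^2$-pairing to a continuous sesquilinear form on $L^{2,k+tm(x,\x)}\times L^{2,-k-tm(x,\x)}$. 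To show it realizes the duality, given $T\in (L^{2,k+tm(x,\x)})^*$ I would compose with the unitary $\Op(\tilde{G}_{k,tm})^{-1}:L^2\to L^{2,k+tm(x,\x)}$ and apply the Riesz theorem on $L^2$ to obtain $f\in L^2$ with $T(\Op(\tilde{G}_{k,tm})^{-1}g)=(g,f)_{L^2}$ for all $g\in L^2$. Setting $w=\Op(\tilde{G}_{k,tm})f\in\mathcal{S}'(\re^n)$ gives $T(u)=(u,w)_{L^2}$ on $\mathcal{S}$, and $\Op(\tilde{G}_{-k,-tm})w=\Op(\tilde{G}_{-k,-tm})\Op(\tilde{G}_{k,tm})f\in L^2$, so $w\in L^{2,-k-tm(x,\x)}$; density of $\mathcal{S}$ propagates the identity to all of $L^{2,k+tm(x,\x)}$, and non-degeneracy is immediate from $\mathcal{S}\subset L^{2,-k-tm(x,\x)}$.

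The main technical obstacle is to justify, cleanly and once and for all, that the Weyl products and remainders above lie in the claimed variable-order symbol classes $S^{s,l+tm(x,\x)}$ of Definition \ref{def1app}: namely, that composing two symbols whose orders in the $\jap{x}$-direction are $\pm (k+tm(x,\x))$ produces a total order $0$ (rather than something like $tm\pm tm$ with variable contributions), and that the logarithmic losses absorbed into the $+0$ notation never spoil $L^2$-boundedness. Once this bookkeeping is set up, everything else is standard Riesz-representation and Cauchy--Schwarz.
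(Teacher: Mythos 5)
Your proposal is correct and takes essentially the same approach as the paper, whose proof is just the terse observation that $\Op(\tilde{G}_{0,tm})\jap{x}^k\Op(\tilde{G}_{k,tm})^{-1}$ and $\Op(\tilde{G}_{k,tm})\jap{x}^{-k}\Op(\tilde{G}_{0,tm})^{-1}$ are $L^2$-bounded by Lemma \ref{varinv} (for part $(ii)$), and that the symbol of $\Op(\tilde{G}_{k,tm})^{-1}$ lies in $S^{0,-k-tm(x,\x)}$ (for part $(i)$). You have simply filled in the variable-order symbol-calculus bookkeeping, the density of $\mathcal{S}(\re^n)$, and the Riesz-representation step that the paper leaves implicit.
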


\begin{proof}
$(i)$ This follows from the fact that the symbol of the inverse of $\Op(\tilde{G}_{k,tm})$ belongs to $S^{0,-k-tm(x,\x)}$.

$(ii)$ Note that $\Op(\tilde{G}_{0,tm})\jap{x}^k\Op(\tilde{G}_{k,tm})^{-1}$ and $\Op(\tilde{G}_{k,tm})\jap{x}^{-k}\Op(\tilde{G}_{0,tm})^{-1}$ is bounded in $L^2$ by Lemma \ref{varinv}. We are done.

\end{proof}

\end{document}